\theoremstyle{plain}
\newtheorem{thm}{Theorem}
\newtheorem{lem}[thm]{Lemma}
\newtheorem{prop}[thm]{Proposition}
\theoremstyle{definition}
\newtheorem{defn}[thm]{Definition}
\newtheorem{rmks}[thm]{Remarks}
\newtheorem{nota}[thm]{Notation}
\numberwithin{thm}{section} \numberwithin{equation}{section}
\newcommand{\ga}[2]{\begin{gather}\label{#1}#2 \end{gather}}
\newcommand{\sE}{{\mathcal E}}
\newcommand{\sF}{{\mathcal F}}
\newcommand{\sH}{{\mathcal H}}
\newcommand{\sL}{{\mathcal L}}
\newcommand{\sM}{{\mathcal M}}
\newcommand{\sN}{{\mathcal N}}
\newcommand{\sO}{{\mathcal O}}
\newcommand{\sP}{{\mathcal P}}
\newcommand{\sQ}{{\mathcal Q}}
\newcommand{\sR}{{\mathcal R}}
\newcommand{\sU}{{\mathcal U}}
\newcommand{\sW}{{\mathcal W}}
\newcommand{\sX}{{\mathcal X}}
\newcommand{\Y}{{\mathbb Y}}
\def\wt#1{\widetilde {#1}}
\begin{document}

\title{A finite dimensional proof of Verlinde Formula}
\author{Xiaotao Sun}
\address{Center of Applied Mathematics, School of Mathematics, Tianjin University, No.92 Weijin Road, Tianjin 300072, P. R. China}
\email{xiaotaosun@tju.edu.cn}
\author{Mingshuo Zhou}
\address{Center of Applied Mathematics, School of Mathematics, Tianjin University, No.92 Weijin Road, Tianjin 300072, P. R. China}
\email{zhoumingshuo@amss.ac.cn}
\date{January 29, 2020}
\thanks{Both authors are supported by the National Natural Science Foundation of China No.11831013;
Mingshuo Zhou is also supported by the National Natural Science Foundation of China No.11501154}

\begin{abstract} We prove two recurrence relations among dimensions $$D_g(r,d,\omega):={\rm dim}\,{\rm H}^0(\sU_{C,\,\omega},\Theta_{\sU_{C,\,\omega}})$$ of spaces of generalized theta functions on moduli spaces $\sU_{C,\,\omega}$. By using of these recurrence relations, an explicit formula (Verlinde formula) of $D_g(r,d,\omega)$ is proved (See Theorem \ref{thm4.3}).
\end{abstract}
\keywords{Moduli spaces, Parabolic sheaves, Generalized theta functions, Verlinde formula}
\subjclass{Algebraic Geometry, 14H60, 14D20}
\maketitle
\begin{quote}

\end{quote}
\section{Introduction}

Let $C$ be a smooth projective curve and $J^d_C$ be the moduli space of rank $1$ vector bundles of degree $d$ on $C$ (i.e. the Jacobian of $C$). It is a classical
theorem that the space ${\rm H}^0(J^d_C, \Theta_{J^d_C})$ of theta functions of order $k$ on $J^d_C$ has dimension $k^g$. A natural question is to find
a formula of dimension for the space ${\rm H}^0(\sU_C, \Theta_{\sU_C})$ of generalized theta functions of order $k$ on
the moduli space $\sU_C$ of semistable vector bundles on $C$ with rank $r$ and degree $d$. It seems impossible for mathematicians to guess such a formula without
the help of Rational Conformal Field Theories (RCFT)(cf. \cite{Ver}).
RCFT is defined to be a functor which associates
a finite-dimensional vector
space $V_C(I, \{\vec a(x)\}_{x\in I})$ to a marked projective curve $(C, I, \{\vec a(x)\}_{x\in I})$ satisfying certain axioms (A0--A4) (cf. \cite{Be3} for the detail). The axioms, in particular the factorization rules (A2 and A4),
can be encoded in a finite-dimensional $\mathbb{Z}$-algebra(the so called fusion ring of the theory). An explicit formula (so called Verlinde formula) for the dimension of $V_C(I, \{\vec a(x)\}_{x\in I})$ can be obtained in terms of the characters of the fusion ring (See Proposition 3.3 of \cite{Be3}).

An important example of RCFT was constructed for a Lie algebra $\mathfrak{g}$ in \cite{TUY} (WZW-models) by associating a space $V_C(\mathfrak{g}, I, \{\vec a(x)\}_{x\in I})$ of conformal blocks to a marked projective curve $(C, I, \{\vec a(x)\}_{x\in I})$. It is this example that relates RCFT to algebraic geometry
when the space of conformal blocks was proved to be the spaces of generalized theta functions on moduli spaces of parabolic $G$-bundles (
${\rm Lie}(G)=\mathfrak{g}$) (cf. \cite{BL}, \cite{Pa} and \cite{Fa}). Then the characters of its fusion ring are determined in terms of representations
of $\mathfrak{g}$  (cf. \cite{Be2} for $\mathfrak{g}=\mathfrak{sl}(r),\,\mathfrak{sp}(r)$ and \cite{Fa} for all classical algebras). Thus an explicit formula
(Verlinde formula) for the dimension of spaces of generalized theta functions is proved. This kind of proof was called \textit{infinite dimensional proof} in \cite{Be}.

Let $\sU_{C,\,\omega}$ be moduli spaces of semistable parabolic bundles
of rank $r$ and degree $d$ on smooth curves $C$ of genus $g\ge 0$ with parabolic structures determined by $\omega=(k,\{\vec n(x),\vec a(x)\}_{x\in I})$.
It is natural to ask if one can prove an explicit formula of $D_g(r,d,\omega):={\rm dim}\,{\rm H}^0(\sU_{C,\,\omega},\Theta_{\sU_{C,\,\omega}})$
without using conformal blocks ? which is called \textit{finite dimensional proof} in \cite{Be}.
There exist some such proofs duo to Bertram, Szenes, Thaddeus, Zagier, Donaldson, Witten, Narasimhan and Ramadas (See \cite{Be} for the survey and references).
As Beauville pointed out in \cite{Be}, all of these proofs only work for $r=2$. As far as we know, a \textit{finite dimensional proof} of Verlinde formula remains open for
$r>2$ (See the comments in \cite{Be} and \cite{Be3}). In fact, even for $r=2$, we know only one such proof duo to Narasimhan and Ramadas, which covers the case of parabolic bundles (See also \cite{DW1} and \cite{DW2} for an analytic proof when $g\ge 2$). When $r=2$, by the result of \cite{NR},  Ramadas proved in \cite{R} a formula of $D_g(2,d,\omega)$ by reducing it to the case of $g=0$ and using a formula of $D_0(2,d,\omega)$. Unfortunately, the formula of $D_0(2,d,\omega)$ was taken from \cite{GK} where its proof is not algebraic.

One of the main results in this article is the following two recurrence relations of $D_g(r,d,\omega)$.
\begin{thm}[See Theorem \ref{thm3.6} and Theorem \ref{thm3.12}]\label{thm1.1} For partitions $g=g_1+g_2$ and $I=I_1\cup I_2$, let
$W_k=\{\,\lambda=(\lambda_1,...,\lambda_r)\,|\, 0=\lambda_r\le\lambda_{r-1}\le\cdots\le\lambda_1\le k\,\}$ and
$$W'_k=\left\{\,\lambda\in W_k\,\,\mid\,\,\left(\sum_{x\in
I_1}\sum^{l_x}_{i=1}d_i(x)r_i(x)+\sum^r_{i=1}\lambda_i\right) \equiv 0({\rm mod}\,\,r)\right\}.$$
Then we have the following recurrence relation
\ga{1.1}{D_g(r,d,\omega)=\sum_{\mu}D_{g-1}(r,d,\omega^{\mu})}
\ga{1.2} {D_g(r,d,\omega)=\sum_{\lambda\in W'_k} D_{g_1}(r, 0,\omega_1^{\lambda})\cdot D_{g_2}(r,d,\omega_2^{\lambda}),}
where $\mu=(\mu_1,\cdots,\mu_r)$ runs through $0\le\mu_r\le\cdots\le
\mu_1< k$ and $\omega^{\mu}$, $\omega_1^{\lambda}$, $\omega_2^{\lambda}$ are explicitly determined by $\mu$ and $\lambda$.
\end{thm}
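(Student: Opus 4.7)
My plan is to prove both recurrences by a one-parameter degeneration of the smooth curve $C$ to a suitable nodal curve $C_0$, together with an identification of generalized theta functions on the moduli over $C_0$ with the right-hand sides of \eqref{1.1} and \eqref{1.2}. For \eqref{1.1}, I would degenerate $C$ to an irreducible nodal curve whose normalization $\tilde C$ has genus $g-1$ with two new marked points $x_+,x_-$ glued at the node; for \eqref{1.2}, to a reducible $C_0 = C_1\cup C_2$ with smooth components of genera $g_1,g_2$ meeting at a single separating node. In each case I construct a flat family $\pi\colon \sC \to B$ of curves with smooth generic fiber and central fiber $C_0$, together with a flat family $\sU \to B$ of moduli of (generalized) parabolic sheaves carrying a relative theta line bundle $\Theta$ that restricts on smooth fibers to the usual generalized theta bundle. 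The first step is to show that $\pi_*\Theta$ is locally free and compatible with base change, giving
\[
D_g(r,d,\omega) \;=\; \dim H^0\bigl(\sU_0,\,\Theta|_{\sU_0}\bigr),
\]
where $\sU_0$ is the moduli of generalized parabolic sheaves on $C_0$. This invariance rests on an appropriate vanishing of higher direct images in the family, following the authors' earlier analysis of theta bundles on moduli of parabolic sheaves.

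\smallskip

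\noindent\textbf{Factorization on $C_0$.} A semistable generalized parabolic sheaf on $C_0$ is a semistable parabolic sheaf $E$ on $\tilde C$ together with a rank-$r$ quotient $E_{x_+}\oplus E_{x_-}\twoheadrightarrow Q$, i.e.\ an identification of the fibers at $x_\pm$. Forgetting this identification produces a morphism
\[
p\colon \sU_0 \longrightarrow \widetilde{\sU},
\]
where $\widetilde{\sU}$ parametrizes parabolic sheaves on $\tilde C$ with two additional full-flag structures at $x_+,x_-$ of unspecified weight. I would compute $p_*\Theta$ by decomposing the residual $GL(r)\times GL(r)$-action at $x_\pm$ on the relevant tensor power of the standard representation: Schur--Weyl yields $\mathrm{End}(\C^r)^{\otimes k}\cong \bigoplus_\mu V_\mu\otimes V_\mu^\vee$, and the identification at the node forces the highest weights at $x_+$ and $x_-$ to be dual. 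Taking $H^0$ and using that $\Theta$ on $\widetilde{\sU}$ twisted by parabolic weight $\mu$ is the theta bundle on $\sU_{\tilde C,\,\omega^\mu}$ produces \eqref{1.1}, with $\mu$ ranging over $0\le\mu_r\le\cdots\le\mu_1<k$. A parallel argument for the reducible $C_0$, with the two sides of the node now lying on different components $C_1, C_2$, yields \eqref{1.2}; matching of determinants across the separating node singles out exactly $\lambda\in W'_k$.

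\smallskip

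\noindent\textbf{Main obstacle.} The principal difficulty lies in the computation of $p_*\Theta$: I need both the construction of the correct moduli of generalized parabolic sheaves on a nodal curve and a proof that the theta bundle decomposes along $p$ with precisely the asserted highest-weight multiplicities. This combines a determinant-of-cohomology calculation on the family with a careful analysis of stabilizers along the exceptional locus of $p$ (where the identification at $x_\pm$ degenerates). A secondary point is the pinning down of the index sets: the bound $\mu_1<k$ arises because weights equal to $k$ are absorbed by the action of the center on the relevant representation, while the congruence defining $W'_k$ is forced by the degree/determinant condition required for gluing a rank-$r$ bundle across the separating node. Once the flatness of $\pi_*\Theta$ and the explicit form of $p_*\Theta$ are in place, both recurrence relations follow.
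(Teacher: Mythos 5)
Your overall strategy---degenerate $C$ to an irreducible one-nodal curve for \eqref{1.1} and to a reducible one-nodal curve $C_1\cup C_2$ for \eqref{1.2}, then factorize the space of generalized theta functions on the central fiber---is the same skeleton the paper uses. But there are two concrete gaps. First, the constancy of $\dim H^0$ in the family does not follow from ``an appropriate vanishing of higher direct images'' that you can defer to earlier work: semicontinuity alone gives only an inequality, and the equality requires $H^1(\sU_{C_0,\,\omega_0},\Theta_{\sU_{C_0,\,\omega_0}})=0$ on the \emph{nodal} central fiber. This vanishing is a new result of the present paper (Theorems \ref{thm3.3} and \ref{thm3.4}); it is obtained by passing to the normalization $\sP_{\omega}$ of the moduli space and invoking the globally $F$-regular type of the fixed-determinant moduli $\sP^L_{\omega}$ from \cite{SZ}, together with the flat determinant map to the Jacobian. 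Your proposal gives no mechanism for this step, and the arguments available in the literature for $r=2$ do not extend.

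Second, and more seriously for \eqref{1.2}: the factorization theorem over a reducible nodal curve does not directly produce a sum over $\lambda\in W'_k$ with degrees $0$ and $d$ on the two factors. What it produces is the paper's \eqref{3.4}, a sum over $\mu$ of products $D_{g_1}(r,d_1^{\mu},\omega_1^{\mu})\cdot D_{g_2}(r,d_2^{\mu},\omega_2^{\mu})$ in which the degrees $d_1^{\mu},d_2^{\mu}$ \emph{vary with} $\mu$ (and the sum is restricted to those $\mu$ for which $d_1^{\mu}$ is an integer). Your remark that ``matching of determinants across the separating node singles out exactly $\lambda\in W'_k$'' conflates this integrality restriction with the final form of the index set. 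The passage from \eqref{3.4} to \eqref{1.2} is the genuinely new technical content of Theorem \ref{thm3.12}: one must show that Hecke transformations at a parabolic point preserve semistability and the theta bundle (Lemmas \ref{lem3.8}--\ref{lem3.10}), so that $D_{g}(r,d,\omega)=D_{g}(r,d-m,H^m_z(\omega))$, and then construct the explicit map $\phi(\mu)=H^{r-i^{\mu}}(\mu)$ from the set $Q_k$ of admissible $\mu$ to $W'_k$ and prove it is a bijection. None of this appears in your proposal, so as written it proves at best the analogue of \eqref{3.4}, not \eqref{1.2}. (A lesser issue: you propose to re-derive the factorization itself via a Schur--Weyl decomposition of $p_*\Theta$; the paper simply cites the factorization theorems of \cite{Su1} and \cite{Su2}, each of which is a substantial independent result, so your sketch there is carrying far more weight than a paragraph can bear.)
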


The idea is to consider a family $\{C_t\}_{t\in \Delta}$ of curves degenerating to a curve $C_0$ with exactly one node. A factorization theorem
$${\rm H}^0(\sU_{C_0,\,\omega_0},\Theta_{\sU_{C_0,\,\omega_0}})=\bigoplus_{\mu}{\rm H}^0(\sU_{\wt C_0,\,\omega^{\mu}_0},\Theta_{\sU_{\wt C_0,\,\omega^{\mu}_0}})$$ for irreducible $C_0$ was proved in \cite{Su1} (See \cite{NR} for $r= 2$) where $\wt C_0$ is the normalization of $C_0$. Finally one has to show that ${\rm dim}\,{\rm H}^0(\sU_{C_t,\,\omega_t},\Theta_{\sU_{C_t,\,\omega_t}})$ is independent of $t\in\Delta$,
which follows clearly ${\rm H}^1(\sU_{C_t,\,\omega_t},\Theta_{\sU_{C_t,\,\omega_t}})=0$. When $C_0$ is irreducible, the vanishing theorem
was proved under assumption that $g\ge 3$ (See \cite{NR} and \cite{Su1}).  Although we have shown in \cite{Su3} that ${\rm dim}\, {\rm H}^0(\sU_{C_t,\,\omega_t},\Theta_{\sU_{C_t,\,\omega_t}})$ is constant for $t\neq 0$ without using vanishing theorem (cf. Corollary 4.8 of \cite{Su3}), the vanishing theorems
for singular curves $C_0$ are needed in order to show
${\rm dim}\,{\rm H}^0(\sU_{C_t,\,\omega_t},\Theta_{\sU_{C_t,\,\omega_t}})
={\rm dim}\,{\rm H}^0(\sU_{C_0,\,\omega_0},\Theta_{\sU_{C_0,\,\omega_0}})\quad (\forall\,\,t\in\Delta)$. When $r=2$, Ramadas proved ${\rm H}^1(\sU_{C_t,\,\omega_t},\Theta_{\sU_{C_t,\,\omega_t}})=0$ in \cite{R} for $g\ge 0$ and irreducible $C_0$ (thus the
recurrence relation \eqref{1.1} for $r=2$). When $C_0=C_1\cup C_2$ is the union of two smooth curves, a factorization theorem
$${\rm H}^0(\sU_{C_0,\,\omega_0},\Theta_{\sU_{C_0,\,\omega_0}})=\bigoplus_{\mu}{\rm H}^0(\sU_{C_1,\,\omega^{\mu}_1},\Theta_{\sU_{C_1,\,\omega^{\mu}_1}})
\otimes {\rm H}^0(\sU_{C_2,\,\omega^{\mu}_2},\Theta_{\sU_{C_2,\,\omega^{\mu}_2}})$$
was proved in \cite{Su2} for $r\ge 2$. Thus, to prove the recurrence relations \eqref{1.1} and \eqref{1.2}, we need firstly to prove
${\rm H}^1(\sU_{C_0,\,\omega_0},\Theta_{\sU_{C_0,\,\omega_0}})=0$
for both cases of $C_0$ irreducible and reducible. The arguments of \cite{R} seems not work for these general cases, our proof of vanishing theorems uses the main results
of \cite{SZ} that the modulo $p$ reduction of moduli spaces are globally $F$-regular for almost $p$ (such varieties are called of globally $F$-regular type). Moreover, in order to obtain the recurrence relation \eqref{1.2}, we have to study the behaving of $D_g(r,d,\omega)$ under Hecke transformations, which is one of technical parts in this article.

The recurrence relation \eqref{1.1} and \eqref{1.2} reduce the problem to compute $D_0(r,d,\omega)$ with $3$ parabolic points (i.e. $|I|=3$). However, computation of
$D_0(r,d,\omega)$ when $|I|=3$ is rather nontrivial and we are not able to find any reference of such computation. In fact, another technical part of this article is
a computation of $D_0(r,d,\omega)$ when $|I|=3$. By using of the recurrence relation \eqref{1.2} (See Proposition \ref{prop4.8}), it is reduced to the computation of $D_0(r,0,\{\omega_s,\lambda_y,\lambda_z\})$.

We describe briefly content of the article. In Section 2, we recall the notions of globally $F$-regular type varieties and the main results of \cite{SZ}. In Section 3,
we prove firstly the vanishing theorem
\begin{thm} [See Theorem \ref{thm3.1}, Theorem \ref{thm3.3} and Theorem \ref{thm3.4}]\label{thm1.2} When $C$ is smooth, for any ample line bundle $\sL$ on $\sU_{C,\,\omega}$, we have
$${\rm H}^i(\sU_{C,\,\omega},\sL)=0 \quad \forall\,\,i>0.$$
When $C$ is irreducible with at most one node, we have
$$H^1(\sU_{C,\,\omega},\Theta_{\sU_{C,\,\omega}})=0$$
where $\Theta_{\sU_{C,\,\omega}}$ is the theta line bundle. When $C$ is reducible with at most one node, for any ample line bundle $\sL$ on $\sU_{C,\,\omega}$, we have
$$H^1(\sU_{C,\,\omega},\sL)=0.$$
\end{thm}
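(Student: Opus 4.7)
The plan is to deduce all three vanishings from the principal result of \cite{SZ}: the mod-$p$ reductions of $\sU_{C,\omega}$ are globally $F$-regular for almost all primes $p$, i.e.\ the moduli spaces are of globally $F$-regular type. The general fact to invoke is that if $X$ is a globally $F$-regular projective variety over a field of characteristic $p>0$, then $H^i(X,L)=0$ for every $i>0$ and every nef and big line bundle $L$. This follows from the splitting of the iterated Frobenius, which produces an injection $H^i(X,L)\hookrightarrow H^i(X,L^{p^n})$ for all $n\ge 1$, combined with Serre vanishing (valid for nef and big $L$) killing the right-hand side for $n\gg 0$.

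First I would spread the data $(C,\omega,\sL)$ to a flat model $\mathcal{U}\to\Spec R$ over a finitely generated $\Z$-algebra $R$, with a line bundle $\mathcal{L}$ restricting to $\sL$ on the generic fiber. By \cite{SZ}, $\mathcal{U}_s$ is globally $F$-regular for a dense set of closed points $s\in\Spec R$, and on these fibers $H^i(\mathcal{U}_s,\mathcal{L}_s)=0$ for $i>0$ by the fact quoted above. Upper semicontinuity of cohomology then forces $H^i(\sU_{C,\omega},\sL)=0$ on the characteristic-zero generic fiber. This single mechanism covers the smooth case for all $i>0$ and the reducible-nodal case for every ample $\sL$ in one stroke.

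For the irreducible nodal case, only the theta bundle $\Theta_{\sU_{C_0,\omega_0}}$ is treated, and this bundle is typically only semiample (not ample) on the singular moduli space. The above argument still applies once one verifies that $\Theta$ is nef and big: nefness is immediate from semiampleness, while bigness can be extracted by comparison with the normalization $\wt C_0$ and the standard description of $\Theta$ in terms of determinant bundles (via the factorization theorem of \cite{Su1}, under which $\Theta$ transforms into an ample bundle on the moduli associated to $\wt C_0$, so that bigness of the original $\Theta$ is recovered by a pushforward/degree computation).

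The main obstacle is precisely this last verification: establishing that $\Theta$ is nef and big (not just semiample) on the singular moduli space in the irreducible nodal case, and confirming that \cite{SZ} supplies globally $F$-regular reduction for the parabolic moduli on curves with a node in a manner compatible with the line bundle under consideration. Given these inputs, the remainder of the proof is a formal spreading-out plus semicontinuity argument, and the stronger statement in the smooth case (arbitrary $i>0$ rather than only $i=1$) comes for free from the strength of global $F$-regularity.
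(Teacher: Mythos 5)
There is a genuine gap, and it sits at the very first step: the main theorem of \cite{SZ} (Theorems \ref{thm2.8}, \ref{thm2.13} and \ref{thm2.18} in this paper) asserts global $F$-regular type for the \emph{fixed-determinant} moduli spaces $\sU^L_{C,\,\omega}$ and $\sP^L_{\omega}$, not for $\sU_{C,\,\omega}$ or $\sP_{\omega}$ themselves. The full moduli space cannot be of globally $F$-regular type when $g\ge 1$: it maps onto $J^d_C$ with connected fibers, so $H^1(\sU_{C,\,\omega},\sO_{\sU_{C,\,\omega}})\supseteq H^1(J^d_C,\sO_{J^d_C})\ne 0$, contradicting Theorem \ref{thm2.3}(2). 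Your ``single mechanism'' of spreading out $\sU_{C,\,\omega}$, invoking Frobenius splitting along divisors plus Serre vanishing, and semicontinuity therefore has no valid input, and the whole argument collapses for $g\ge 1$. What is missing is the entire Jacobian direction of the problem. The paper handles it by passing to the finite cover $J^0_C\times\sU^L_{C,\,\omega}\to\sU_{C,\,\omega}$, decomposing the pullback of $\sL$ as $\sL_1\boxtimes\sL_2$ (which uses $H^1(\sU^L_{C,\,\omega},\sO)=0$, itself a consequence of global $F$-regular type of the fixed-determinant space), and then combining Smith's vanishing on the factor $\sU^L_{C,\,\omega}$ with the classical vanishing of higher cohomology of ample line bundles on the abelian variety $J^0_C$. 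The reducible nodal case is treated the same way via $\sP^L_{\omega}\times J^0_C\to\sP_{\omega}$.

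Your treatment of the irreducible nodal case is also off target. The paper does not need $\Theta$ to be merely nef and big on the singular $\sU_{C,\,\omega}$; it passes to the normalization $\phi:\sP_{\omega}\to\sU_{C,\,\omega}$ using the injectivity of $\phi^*$ on $H^1$ (Lemma \ref{lem3.2}), and then proves $H^i(\sP_{\omega},\Theta_{\sP_{\omega}})=0$ via the Leray spectral sequence for the flat morphism ${\rm Det}:\sP_{\omega}\to J^d_{\wt C}$: the fiberwise vanishing $R^i{\rm Det}_*\Theta_{\sP_{\omega}}=0$ comes from global $F$-regular type of $\sP^L_{\omega}$ (Theorem \ref{thm2.13}), while $H^i(J^d_{\wt C},{\rm Det}_*\Theta_{\sP_{\omega}})=0$ requires an explicit decomposition of the pushforward ${\rm Det}_*\Theta_{\sP_{\omega}}$ on the Jacobian taken from \cite{Su1} and \cite{Su3}. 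This last ingredient is a genuinely nontrivial input that no nef-and-big verification can substitute for, which is also why the statement in this case is restricted to the theta bundle rather than an arbitrary ample line bundle.
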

The key technical part of Section 3 is to prove Theorem \ref{thm1.1}, where the recurrence relation \eqref{1.1} follows Theorem \ref{thm1.2} and
the factorization theorem in \cite{Su1}. But the recurrence relation \eqref{1.2} is obtained by using factorization theorem in \cite{Su2} and Hecke transformation.
In Section 4, by using of recurrence relations \eqref{1.1} and \eqref{1.2}, we give a self-contained exposition of computation of $D_g(r,d,\omega)$ (See Theorem \ref{thm4.3} for detail):

\begin{thm} For any given data $\omega=(k,\{\vec n(x),\vec a(x)\}_{x\in I})$, we have
$$\aligned &D_g(r,d,\omega)=(-1)^{d(r-1)}\left(\frac{k}{r}\right)^g(r(r+k)^{r-1})^{g-1}\\&\sum_{\vec v}
\frac{{\rm exp}\left(2\pi i\left(\frac{d}{r}-\frac{|\omega|}{r(r+k)}\right)\sum_{i=1}^{r}v_i\right)S_{\omega}\left({\rm exp}\,2\pi i\frac{\vec v}{r+k}\right)} {\prod_{i<j}\left(2\sin\,\pi \frac{v_i-v_j}{r+k}\right)^{2(g-1)}}\endaligned$$
where $\vec v=(v_1,v_2,\ldots,v_r)$ runs through the integers $$0=v_r<\cdots <v_2<v_1<r+k.$$
\end{thm}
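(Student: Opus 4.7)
The plan is to prove the formula by induction using the recurrences of Theorem~\ref{thm1.1}. Write the stated right-hand side as $V_g(r,d,\omega)$. Since iterated application of \eqref{1.2} expresses any $D_g(r,d,\omega)$ as a sum of products of three-point values on $\P^1$, it suffices to verify that (i) $V_g$ satisfies \eqref{1.1}; (ii) $V_g$ satisfies \eqref{1.2}; and (iii) $V_0(r,d,\omega)=D_0(r,d,\omega)$ when $|I|=3$. Together with the recurrences, these three statements pin down the formula on every $(C,\omega)$.

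For the base case (iii), I would further reduce, via Hecke transformations and another application of \eqref{1.2} (Proposition~\ref{prop4.8}), to the three-point values $D_0(r,0,\{\omega_s,\lambda_y,\lambda_z\})$; each of these equals the multiplicity of a representation of $\mathrm{GL}_r$ in a tensor product and admits an elementary closed form, which then has to be matched against $V_0(r,0,\{\omega_s,\lambda_y,\lambda_z\})$ by recognizing the $\vec v$-sum at three points as a discrete version of the Weyl integration formula. For (i), substituting $V_{g-1}(r,d,\omega^\mu)$ into the right side of \eqref{1.1} replaces $S_\omega\bigl(e^{2\pi i\vec v/(r+k)}\bigr)$ by $S_\omega(\cdot)\cdot\bigl|\chi_\mu\bigl(e^{2\pi i\vec v/(r+k)}\bigr)\bigr|^2$, because $\omega^\mu$ arises from $\omega$ by adjoining two mutually dual parabolic points of weight $\mu$ on the normalized curve. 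The classical orthogonality of level-$k$ Weyl characters at the Verlinde points,
$$\sum_{\mu}\bigl|\chi_\mu\bigl(e^{2\pi i\vec v/(r+k)}\bigr)\bigr|^2=\frac{k}{r}\cdot r(r+k)^{r-1}\cdot\prod_{i<j}\left(2\sin\pi\frac{v_i-v_j}{r+k}\right)^{-2},$$
summed over $0\le\mu_r\le\cdots\le\mu_1<k$, is exactly the factor needed to upgrade the prefactor of $V_{g-1}$ to that of $V_g$ while raising the exponent of the $\sin$-denominator from $2(g-2)$ to $2(g-1)$. For (ii), the same orthogonality, now restricted to $\lambda\in W'_k$ so as to enforce the mod-$r$ compatibility of the degree $d$, decouples the double $\vec v$-sum coming from $V_{g_1}(r,0,\omega_1^\lambda)\cdot V_{g_2}(r,d,\omega_2^\lambda)$ into the single $\vec v$-sum of $V_g(r,d,\omega)$.

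The main obstacle is step (iii): a genuinely finite-dimensional evaluation of $D_0(r,0,\{\omega_s,\lambda_y,\lambda_z\})$, independent of conformal blocks, and its reconciliation with the specific trigonometric packaging of $V_0$---in particular the sign $(-1)^{d(r-1)}$ and the global phase factor involving $|\omega|/(r(r+k))$, both of which are produced by the Hecke modifications needed to bring a general weight datum into the special form $(\omega_s,\lambda_y,\lambda_z)$. A secondary technical difficulty is the careful bookkeeping of the weight shifts $\omega^\mu$, $\omega_1^\lambda$, $\omega_2^\lambda$ and of the additional node contributions in the character sums, so that the orthogonality identities apply in the exact normalizations; this is delicate but conceptually routine once the base case is in hand.
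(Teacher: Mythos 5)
Your plan coincides with the paper's own proof: steps (i) and (ii) are exactly Lemmas \ref{lem4.4} and \ref{lem4.5}, which substitute the inductive hypothesis into the recurrences \eqref{3.3} and \eqref{3.9} and close the induction via the character-orthogonality identities \eqref{4.2}--\eqref{4.4} of Proposition \ref{prop4.1} (your displayed sum over $0\le\mu_r\le\cdots\le\mu_1<k$ is precisely \eqref{4.2}), while your step (iii), including the identification of $D_0(r,0,\{\omega_s,\lambda_y,\lambda_z\})$ with the Pieri-rule multiplicity and the Hecke bookkeeping for the sign and phase, is exactly what Lemmas \ref{lem4.6}, \ref{lem4.7} and Proposition \ref{prop4.8} carry out. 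You have correctly flagged the base case as the genuine technical burden; the paper discharges it by showing directly that the relevant moduli spaces on $\mathbb{P}^1$ are a single point or empty.
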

One of the key technical results in Section 4 is the computation of $D_0(r,0,\{\omega_s,\lambda_y,\lambda_z\})$ in Lemma \ref{lem4.7} (its proof spends 6 pages).

\

{\it Acknowledegements:} Xiaotao Sun would like to thank T. R. Ramadas for helpful discussions (by emails) about the computation of $D_g(r,d,\omega)$ when $g=0$
and suggestion of the reference \cite{GK}. He also would like to thank M. S. Narasimhan who encourages him consistently to give a purely
algebro-geometric proof of Verlinde formula.

\section{Globally $F$-regular type of moduli spaces }

Let $X$ be a variety over a perfect field $k$ of $char (k)=p>0$, $$F:X\to X$$
be the Frobenius map and $F^e:X\to X$
be the e-th iterate of Frobenius map. When $X$ is normal, for any (weil) divisor $D\in Div(X)$,
$$\sO_X(D)(V)=\{\,f\in K(X)\,|\, div_V(f)+D|_V\ge 0\,\},\quad \forall\,\,V\subset X$$
is a reflexive subsheaf of constant sheaf $K=K(X)$. In fact, we have
$$\sO_X(D)=j_*\sO_{X^{sm.}}(D)$$
where $j:X^{sm.}\hookrightarrow X$ is the open set of smooth points, and $\sO_X(D)$ is an invertible sheaf
if and only if $D$ is a Cartier divisor.

\begin{defn}\label{defn2.1} A normal variety $X$ over a perfect field is called
\emph{stably Frobenius $D$-split} if $\sO_X\to F^e_*\sO_X(D)$ is split for some $e>0$.
$X$ is called \emph{globally F-regular} if $X$ is stably Frobenius $D$-split
for any effective divisor $D$. A variety $X$ is called \emph{Frobenius split} if $\sO_X\to F_*\sO_X$ is split. In particular,
any \emph{globally F-regular} variety is Frobenius split.
\end{defn}

For any scheme $X$ of finite type over a field $K$ of
characteristic zero, there is a
finitely generated $\mathbb{Z}$-algebra $A\subset K$ and an $A$-flat
scheme $$X_A\to S={\rm Spec}(A)$$ such that $X_K=X_A\times_S{\rm
Spec}(K)\cong X$. $X_A\to S={\rm Spec}(A)$ is called an integral model of $X/K$, and a closed fiber
$X_s=X_A\times_S{\rm Spec}(\overline{k(s)})$ is
called "\textbf{modulo $p$ reduction of $X$}" where $p={\rm
char}(k(s))>0$.

\begin{defn}\label{defn2.2} A variety $X$ over a field of characteristic zero is said of \emph{globally F-regular type} (resp.\emph{ F-split type}) if its "\textbf{modulo $p$ reduction of $X$}"  are globally F-regular (resp. \emph{F-split })
for a dense set of $p$.
\end{defn}

An equivalent definition of \emph{globally F-regular type} for a projective variety $X$ is that its modulo $p$ reductions (for a dense set of $p$) are stably Frobenius $D$-split along any effective Cartier divisor $D$, which do not require normality of its modulo $p$ reductions prior to the definition.
Projective varieties of \emph{globally F-regular type} have many nice properties and a good vanishing theorem of cohomology.

\begin{thm}[Corollary 5.3 and Corollary 5.5 of \cite{Sm}]\label{thm2.3} Let $X$ be a projective variety over a field of characteristic zero. If $X$ is of globally F-regular type,
then we have \begin{itemize} \item [(1)] $X$ is normal, Cohen-Macaulay with rational singularities. If $X$ is $\mathbb{Q}$-Gorenstein, then $X$ has log terminal singularities.
\item [(2)] For any nef line bundle $\sL$ on $X$, we have $H^i(X,\sL)=0$ when $i>0$. In particular, $H^i(X,\sO_X)=0$ whenever $i>0$.
\end{itemize}
\end{thm}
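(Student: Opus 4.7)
The plan is to reduce everything to the characteristic $p$ setting by spreading $X$ out to an integral model $X_A\to \Spec(A)$ over a finitely generated $\Z$-subalgebra $A$ of the base field, then exploit global $F$-regularity of the modulo $p$ reductions $X_s$ together with Fujita vanishing, and finally lift the resulting cohomology vanishing and singularity properties back to characteristic zero via upper semicontinuity and the standard comparison theorems between $F$-singularities and singularities in characteristic zero.

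For the vanishing in (2), fix a nef line bundle $\sL$, spread it out to $\sL_A$ on $X_A$, and fix a closed point $s\in \Spec(A)$ at which $X_s$ is globally $F$-regular. Let $H$ be any ample Cartier divisor on $X_s$. The key auxiliary input is Fujita vanishing applied uniformly over the nef family $\{\sL_s^{p^e}\}_{e\ge 0}$: since the integer produced by Fujita depends only on the ample class and $\sO_{X_s}$ (not on the particular nef line bundle), one obtains $m\gg 0$ such that
$$H^i\bigl(X_s,\,\sO_{X_s}(mH)\otimes \sL_s^{p^e}\bigr)=0\qquad \forall\,i>0,\ \forall\,e\ge 0.$$
For this fixed $m$, global $F$-regularity of $X_s$ produces some $e$ for which $\sO_{X_s}\to F^e_*\sO_{X_s}(mH)$ is split. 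Twisting by $\sL_s$ and applying the projection formula, the map
$$\sL_s \longrightarrow F^e_*\bigl(\sO_{X_s}(mH)\otimes \sL_s^{p^e}\bigr)$$
is a split injection, realizing $H^i(X_s,\sL_s)$ as a direct summand of $H^i(X_s,\sO_{X_s}(mH)\otimes \sL_s^{p^e})=0$. Upper semicontinuity of $h^i$ in the flat family $X_A/\Spec(A)$ then yields $H^i(X,\sL)=0$.

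For (1), normality and Cohen--Macaulayness of $X$ descend from those of the fibers $X_s$: globally $F$-regular rings in characteristic $p$ are strongly $F$-regular, hence normal and Cohen--Macaulay, and these properties pass to the generic fiber via flatness and semicontinuity of depth. Rational singularities then follow by combining Cohen--Macaulayness with the vanishing in part (2) applied to an appropriate resolution, via the Hara--Mehta--Srinivas theorem identifying $F$-rational singularities in characteristic $p$ with rational singularities in characteristic zero. In the $\Q$-Gorenstein case, log-terminality is the theorem of Hara and Watanabe identifying strongly $F$-regular singularities with log-terminal singularities.

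The main obstacle is the uniformity in $e$ of the vanishing statement. A naive application of Serre vanishing only handles a single ample line bundle, but the splitting exponent $e$ produced by global $F$-regularity depends on the twisting divisor $mH$, so the order of quantifiers is critical: one must first fix $m$ by Fujita vanishing, obtain vanishing for all $e\ge 0$ simultaneously, and only then invoke $F$-regularity to choose $e$. Once this order is arranged the remaining steps are largely formal, but the transfer of $F$-regularity properties from characteristic $p$ to characteristic zero in (1) does rely on nontrivial comparison results from the theory of tight closure.
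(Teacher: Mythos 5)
Your argument is correct, but note that the paper offers no proof of this statement at all: it is imported verbatim as Corollary 5.3 and Corollary 5.5 of \cite{Sm}, so there is nothing internal to compare against, and what you have written is essentially a faithful reconstruction of Smith's own argument (spread out to an integral model, prove vanishing on a globally $F$-regular closed fibre, conclude by semicontinuity of $h^i$ in the flat proper family, and import the singularity statements in (1) from the tight-closure comparison theorems). The one genuine design choice you make differently concerns the uniformity in $e$: the standard route first disposes of \emph{ample} line bundles $\sA$, where $H^i(X_s,\sA)$ is a direct summand of $H^i(X_s,\sA^{p^e})$ by Frobenius splitting and the latter vanishes for $e\gg 0$ by Serre, and then handles a nef $\sL_s$ by observing that $\sO_{X_s}(mH)\otimes\sL_s^{p^e}$ is ample so the already-established ample case applies for whatever $e$ the splitting provides; your appeal to Fujita vanishing (which does hold in positive characteristic) to secure vanishing for all $e$ before choosing $e$ achieves the same end and is equally valid, just slightly heavier machinery. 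Two small points to tidy: take $m$ also large enough that $mH$ is linearly equivalent to an effective divisor, since stable Frobenius $D$-splitting is formulated for effective $D$; and in part (1) you should say explicitly that normality and Cohen--Macaulayness are constructible properties in the flat family $X_A\to {\rm Spec}(A)$, so their validity on a dense set of closed fibres forces them at the generic fibre and hence on $X$.
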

In \cite{SZ}, we have proved that moduli spaces of parabolic bundles and generalized parabolic sheaves with a fixed determinant on a smooth curve are of globally $F$-regular type. To state it, we recall firstly the notions of moduli spaces of parabolic bundles and generalized parabolic sheaves.

Let $C$ be an irreducible projective curve of genus $g\ge 0$ over an
algebraically closed field $K$ of characteristic zero, which has at most
one node $x_0\in C$. Let $I$ be a finite set of smooth points of $C$, and
$E$ be a coherent sheaf of rank $r$ and degree $d$ on $C$ (the rank
$r(E)$ is defined to be dimension of $E_{\xi}$ at generic point
$\xi\in C$, and $d=\chi(E)-r(1-g)$).

\begin{defn}\label{defn2.4} By a quasi-parabolic structure of $E$ at a
smooth point $x\in C$, we mean a choice of flag of quotients
$$E_x=Q_{l_x+1}(E)_x\twoheadrightarrow
Q_{l_x}(E)_x\twoheadrightarrow\cdots\cdots\twoheadrightarrow
Q_1(E)_x\twoheadrightarrow Q_0(E)_x=0$$ of the fibre $E_x$, $n_i(x)={\rm
dim}(ker\{Q_i(E)_x\twoheadrightarrow Q_{i-1}(E)_x\})$ ($1\le i\le l_x+1$)
are called type of the flags. If, in addition, a sequence of integers
$$0\leq a_1(x)<a_2(x)<\cdots
<a_{l_x+1}(x)< k$$ are given, we call that $E$ has a parabolic
structure of type $$\vec n(x)=(n_1(x),n_2(x),\cdots,n_{l_x+1}(x))$$ and
weight $\vec a(x)=(a_1(x),a_2(x),\cdots,a_{l_x+1}(x))$ at $x\in C$.
\end{defn}

\begin{defn}\label{defn2.5} For any subsheaf $F\subset E$, let $Q_i(E)_x^F\subset
Q_i(E)_x$ be the image of $F$ and $n_i^F={\rm
dim}(ker\{Q_i(E)_x^F\twoheadrightarrow Q_{i-1}(E)_x^F\})$. Let
$${\rm par}\chi(E):=\chi(E)+\frac{1}{k}\sum_{x\in
I}\sum^{l_x+1}_{i=1}a_i(x)n_i(x),$$
$${\rm par}\chi(F):=\chi(F)+\frac{1}{k}\sum_{x\in
I}\sum^{l_x+1}_{i=1}a_i(x)n^F_i(x).$$
Then $E$ is called semistable (resp., stable) for $\omega=(k, \{\vec n(x),\,\,\vec a(x)\}_{x\in I})$ if for any
nontrivial $E'\subset E$ such that $E/E'$ is torsion free,
one has
$${\rm par}\chi(E')\leq
\frac{{\rm par}\chi(E)}{r}\cdot r(E')\,\,(\text{resp., }<).$$
\end{defn}

\begin{thm}[Theorem 2.13 of \cite{Su3}]\label{thm2.6} There
exists a seminormal projective variety
$\sU_{C,\,\omega}:=\sU_C(r,d,
\{k,\vec n(x),\vec a(x)\}_{x\in I}),$ which is the coarse moduli
space of $s$-equivalence classes of semistable parabolic sheaves $E$
of rank $r$ and $\chi(E)=\chi=d+r(1-g)$ with parabolic structures of type
$\{\vec n(x)\}_{x\in I}$ and weights $\{\vec a(x)\}_{x\in I}$ at
points $\{x\}_{x\in I}$. If $C$ is smooth, then it is normal, with
only rational singularities.
\end{thm}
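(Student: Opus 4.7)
The plan is to construct $\sU_{C,\omega}$ by geometric invariant theory, in the spirit of Mehta--Seshadri on smooth curves and its extension to an irreducible curve with one node. First I would establish boundedness: for fixed numerical data $(r,\chi,\{\vec n(x)\})$, the family of semistable parabolic sheaves $E$ is bounded, so for $N\gg 0$ we have $H^1(C,E(N))=0$ and $E(N)$ is globally generated. Fixing an identification $H^0(E(N))\cong K^{\chi'}$ with $\chi'=\chi(E(N))$ realizes every such $E$ as a point of a closed subscheme $R$ of a Quot scheme $\mathrm{Quot}(\sO_C(-N)^{\oplus\chi'},P)$. To encode the parabolic data, pass to the relative flag bundle $R^{\mathrm{par}}\to R$ whose fiber over $[E]$ parameterizes flags of quotients of $E_x$ of type $\vec n(x)$ at each $x\in I$.

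Next, the group $G=\mathrm{GL}(\chi')$ acts on $R^{\mathrm{par}}$ by change of basis, and I would linearize the action by combining the Grothendieck determinant line bundle on the Quot factor with the tautological line bundles on the flag factors, weighted according to $k$ and $\{\vec a(x)\}_{x\in I}$. A Hilbert--Mumford computation then converts numerical GIT (semi)stability of a point $(E,\{F_\bullet(x)\})$ into precisely the inequality defining parabolic (semi)stability in Definition \ref{defn2.5}: a one-parameter subgroup corresponds to a weighted filtration of $E$, and its numerical weight on the chosen linearization is, up to a positive multiple, $r\cdot\mathrm{par}\,\chi(E')-\mathrm{par}\,\chi(E)\cdot r(E')$. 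Mumford's theorem then produces the projective quotient $\sU_{C,\omega}=R^{\mathrm{par},\,\mathrm{ss}}/\!/G$, and the usual identification of closed orbits with polystable objects yields the bijection between closed points and $s$-equivalence classes.

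For the regularity statements I would proceed in two steps. When $C$ is smooth, $R$ is smooth at $[E]$ whenever the relevant $\mathrm{Ext}^1$ vanishes, which holds throughout the semistable locus for $N\gg 0$ by boundedness; hence $R^{\mathrm{par}}$ is smooth along that locus. Since $G$ is reductive, Boutot's theorem on GIT quotients of smooth varieties by reductive groups gives that $\sU_{C,\omega}$ is normal with rational, in particular Cohen--Macaulay, singularities. When $C$ is irreducible with one node $x_0$, $R^{\mathrm{par}}$ is no longer smooth at points where $E$ fails to be locally free at $x_0$, but one can show directly that its singularities are seminormal: the local model of the Quot scheme at such a point is governed by standard matrix equations at $x_0\notin I$, whose germs are seminormal, while the parabolic flag factors remain smooth. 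Seminormality is preserved by good quotients by reductive groups, yielding the seminormality of $\sU_{C,\omega}$.

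The main obstacle I anticipate is this last seminormality statement. One must include non-locally-free torsion-free sheaves in $R^{\mathrm{par}}$ to get projectivity of the quotient, and near such points the Quot scheme genuinely fails to be smooth, so the descent of seminormality (rather than the stronger normality that holds on the smooth locus) requires a careful local analysis of the deformation space of $E$ at $x_0$ together with compatibility of the $G$-action with the seminormalization. By contrast, the Hilbert--Mumford weight calculation and the passage from the smooth case to the main statement are essentially bookkeeping once the linearization has been chosen to match $\omega$.
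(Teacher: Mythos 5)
Your proposal follows essentially the same route as the source: the paper does not prove this statement itself but imports it from \cite{Su3}, and immediately afterwards recalls precisely the construction you describe --- the Quot scheme $\bold Q$, the relative flag scheme $\sR=\times_{\bold Q}Flag_{\vec n(x)}(\sF_x)$, the ${\rm SL}(V)$-linearization $\Theta_{\sR,\omega}$ weighted by $(k,\{\vec a(x)\}_{x\in I})$, the identification of GIT (semi)stability with parabolic (semi)stability, and the GIT quotient $\sR^{ss}_{\omega}\to\sU_{C,\omega}$. Your handling of the regularity claims (Boutot on the smooth locus; a Seshadri-type local analysis of the Quot scheme at points where $E$ fails to be locally free at the node, followed by descent of seminormality to the invariant ring) is likewise the standard argument underlying Theorem 2.13 of \cite{Su3}, and you correctly isolate the seminormality of those local models as the only genuinely delicate step.
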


Recall the construction of $\sU_{C,\,\omega}=\sU_C(r,d,\omega)$. Fix a line bundle $\sO(1)=\sO_C(c\cdot y)$ on $C$ of ${\rm deg}(\sO(1))=c$, let
$\chi=d+r(1-g)$, $P$ denote the polynomial $P(m)=crm+\chi$,
$\sO_C(-N)=\sO(1)^{-N}$ and $V=\Bbb C^{P(N)}$. Let $\bold Q$ be the Quot scheme of quotients $V\otimes\sO_{C}(-N)\to F\to 0$ (of rank
$r$ and degree $d$) on $C$. Thus there is on $C\times\bold Q$ a universal quotient
$$V\otimes\sO_{C\times\bold Q}(-N)\to \sF\to 0.$$
Let $\sF_x=\sF|_{\{x\}\times\bold Q}$ and $Flag_{\vec n(x)}(\sF_x)\to\bold Q$ be the relative flag scheme of type $\vec n(x)$. Let
$$\sR=\underset{x\in I}{\times_{\bold Q}}Flag_{\vec n(x)}(\sF_x)\to \bold Q,$$
on which reductive group ${\rm SL}(V)$ acts. The data $\omega=(k, \{\vec n(x),\,\,\vec a(x)\}_{x\in I})$, more precisely, the
weight $(k,\{\vec a(x)\}_{x\in I})$ determines a polarisation
$$\Theta_{\sR,\omega}=({\rm
det}R\pi_{\sR}\sE)^{-k}\otimes\bigotimes_{x\in I}
\lbrace\bigotimes^{l_x}_{i=1} {\rm det}(\sQ_{\{x\}\times
\sR,i})^{d_i(x)}\rbrace\otimes\bigotimes_q{\rm
det}(\sE_y)^{\ell}$$
on $\sR$ such that the open set $\sR^{ss}_{\omega}$ (resp. $\sR^s_{\omega}$) of
GIT semistable (resp. GIT stable) points are precisely the set of semistable (resp. stable) parabolic sheaves on $C$ (see \cite{Su3}), where $\sE$ is the pullback of $\sF$ (under
 $C\times\sR\to C\times \bold Q$), ${\rm
det}R\pi_{\sR}\sE$ is determinant line bundle of cohomology,
$$\sE_x=\sQ_{\{x\}\times \sR,l_x+1}\twoheadrightarrow\sQ_{\{x\}\times \sR,l_x}\twoheadrightarrow \sQ_{\{x\}\times \sR,l_x-1}
\twoheadrightarrow\cdots\twoheadrightarrow \sQ_{\{x\}\times
\sR,1}\twoheadrightarrow0$$ are universal quotients on $\sR$ of type $\vec n(x)$, $d_i(x)=a_{i+1}(x)-a_i(x)$ and
$$\ell:=\frac{k\chi-\sum_{x\in I}\sum^{l_x}_{i=1}d_i(x)r_i(x)}{r}.$$
Then $\sU_{C,\,\omega}$ is the GIT quotient $\sR^{ss}_{\omega}\xrightarrow{\psi} \sU_{C,\,\omega}:=\sU_C(r,d, \omega)$ and $\Theta_{\sR^{ss},\omega}$
descends to an ample line bundle $\Theta_{\sU_{C,\,\omega}}$ on $\sU_{C,\,\omega}$ when $\ell$ is an integer.

\begin{defn}\label{defn2.7} When $C$ is a smooth projective curve, let
$${\rm Det}: \sU_{C,\,\omega}\to J^d_C,\quad E\mapsto {\rm det}(E):=\bigwedge^rE$$
be the determinant map. Then, for any $L\in J^d_C$, the fiber
$${\rm Det}^{-1}(L):=\sU_{C,\,\omega}^L$$ is called moduli space of parabolic bundles with a fixed determinant.
\end{defn}
\begin{thm}[Theorem 3.7 of \cite{SZ}]\label{thm2.8} The moduli spaces $\sU_{C,\,\omega}^L$ are of globally F-regular type. If $J^0_C$ of $C$ is of F-split type,
so is $\sU_{C,\,\omega}$.
\end{thm}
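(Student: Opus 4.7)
My plan is to spread out to characteristic $p$ for a dense set of primes and use the GIT construction of $\sU_{C,\omega}^L$ to transfer global $F$-regularity from a pre-quotient scheme. First I would fix an integral model over a finitely generated $\Z$-subalgebra $A\subset K$ so that $C$, the Quot scheme $\bold Q$, the flag bundle $\sR\to\bold Q$, the $\mathrm{SL}(V)$-action, the polarization $\Theta_{\sR,\omega}$, the semistable locus $\sR^{ss}_{\omega}$, and the subscheme $\sR^{ss,L}_{\omega}\subset\sR^{ss}_{\omega}$ cutting out the fixed determinant $L$ all spread out flatly over $\Spec A$. For any closed point $s\in\Spec A$ of residue characteristic $p$, the modulo $p$ reduction of $\sU_{C,\omega}^L$ is then the GIT quotient of the modulo $p$ reduction of $\sR^{ss,L}_{\omega}$ by $\mathrm{SL}(V)$.

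The transfer tool is a descent principle of Schwede--Smith type: an $\mathrm{SL}(V)$-invariant Frobenius splitting along an effective divisor on $Y$ descends to a splitting along the image divisor on the good quotient $Y\to Y/\mathrm{SL}(V)$, so good reductive quotients preserve global $F$-regularity. Applied to $\sR^{ss,L}_{\omega}\to\sU_{C,\omega}^L$, this reduces the first statement to global $F$-regularity of $\sR^{ss,L}_{\omega}$ modulo $p$. Now $\sR\to\bold Q$ is a tower of relative flag varieties $Flag_{\vec n(x)}(\sF_x)$, and flag bundles over globally $F$-regular bases remain globally $F$-regular (Mehta--Ramanathan-type splittings along Schubert divisors pull back to associated bundles), so the problem reduces further to global $F$-regularity of $\bold Q^{ss,L}$ modulo $p$. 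I would then take $N$ sufficiently large so that every semistable parabolic bundle $E$ satisfies $H^1(C,E(N))=0$ and is generated by its global sections, identify $\bold Q^{ss,L}$ as a locally closed subscheme of a Grassmannian carrying fixed determinant data, and pull back canonical splittings; the ampleness of $\Theta$ on $\bold Q^{ss,L}$ would then promote the splitting to one compatible with every effective divisor, which is the criterion for global $F$-regularity.

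For the second statement, the determinant morphism $\mathrm{Det}:\sU_{C,\omega}\to J^d_C$ is an \'etale-locally trivial fibration with fiber $\sU_{C,\omega}^L$ (through the tensor-product action of $J^0_C$, up to a finite kernel of multiplication by $r$). Combining the Frobenius splitting on $\sU_{C,\omega}^L$ modulo $p$ (from the first statement) with one on $J^d_C\cong J^0_C$ modulo $p$ (supplied by the hypothesis that $J^0_C$ is of $F$-split type), and glueing these along an \'etale-local trivialization of $\mathrm{Det}$, one obtains a Frobenius splitting of $\sU_{C,\omega}$ modulo $p$ for a dense set of primes.

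The hardest step will be verifying global $F$-regularity of $\bold Q^{ss,L}$ modulo $p$. The Quot scheme on a higher-genus curve is singular and is not cleanly embedded as an open subvariety of a globally $F$-regular ambient space, so one cannot simply quote the Mehta--Ramanathan splittings for smooth projective homogeneous varieties. The technical heart of the argument should be the construction of compatible sections of powers of $\Theta$ whose divisors detect global $F$-regularity along arbitrary effective divisors, while tracking the fixed-determinant condition as a $\mathrm{GL}(V)$-equivariant slice of an ambient Grassmannian; the cohomological vanishing needed to sustain this construction for a dense set of primes is where I expect the bulk of the work to lie.
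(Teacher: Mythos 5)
This statement is imported verbatim from \cite{SZ} (Theorem 3.7 there); the present paper supplies no proof of it, so your proposal can only be judged against what a complete argument would have to deliver. Judged that way, it has genuine gaps. Your reduction chain $\sU^L_{C,\omega}\leftarrow\sR^{ss,L}_{\omega}\leftarrow \bold Q^{ss,L}$ does not close. Global $F$-regularity is a property of \emph{projective} varieties, while $\sR^{ss,L}_{\omega}$ and the corresponding Quot-scheme locus are only quasi-projective open subsets of highly singular projective schemes; there is no ambient globally $F$-regular projective model from which a splitting along an arbitrary effective divisor could be restricted. The descent step you invoke ("an $\mathrm{SL}(V)$-invariant splitting along an effective divisor descends to the good quotient") requires you to produce, for \emph{every} effective divisor $D$ on $\sU^L_{C,\omega}$, an invariant splitting of $\sO\to F^e_*\sO(\psi^*D)$ upstairs; invariance under a reductive group in characteristic $p$ is exactly the delicate point (there is no integral Reynolds operator), and you give no mechanism for it. Finally, the terminal object of your reduction --- global $F$-regularity of the Quot-scheme locus modulo $p$ --- carries the entire content of the theorem and is explicitly left open; worse, the Quot scheme $\bold Q$ itself is, for large $N$, singular along the boundary of non-locally-free quotients and is not an open piece of a homogeneous space, so "pulling back canonical splittings from a Grassmannian" cannot work as stated. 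In short, the proposal relocates the difficulty without resolving it.

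For the second statement, a Frobenius splitting is a single global homomorphism $F_*\sO_X\to\sO_X$ subject to a normalization; splittings cannot be "glued along an \'etale-local trivialization" of $\mathrm{Det}$ --- that is not a descent datum. The mechanism that actually makes the hypothesis on $J^0_C$ enter is the one already visible in the proof of Theorem \ref{thm3.1} of this paper: the finite \'etale cover $J^0_C\times\sU^L_{C,\omega}\to\sU_{C,\omega}$, $(L_0,E)\mapsto L_0\otimes E$, of degree $r^{2g}$. For primes $p\nmid r$ the trace map splits $\sO_{\sU_{C,\omega}}\to\pi_*\sO_{J^0_C\times\sU^L_{C,\omega}}$, a product of $F$-split varieties is $F$-split, and $F$-splitting descends along a split finite morphism. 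That one-line argument should replace your glueing step.
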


When $C$ is irreducible with one node $x_0\in C\setminus I$, let $\pi:\wt C\to C$ be the normalization and $\pi^{-1}(x_0)=\{x_1,\,x_2\}\subset \wt C$. Then the
normalization $\sP_{\omega}$ of $\sU_{C,\,\omega}$ is the moduli space of generalized parabolic sheaves on $\wt C$. A \emph{generalized parabolic sheaf} (GPS) $(E,Q)$ of rank $r$ and degree $d$ on $\wt C$ consists of
a sheaf $E$ of degree $d$ on $\wt C$, torsion free of rank $r$ outside
$\{x_1,x_2\}$ with parabolic structures at the points of $I$ and an $r$-dimensional quotient
$E_{x_1}\oplus E_{x_2}\xrightarrow{q} Q\to 0.$

\begin{defn}\label{defn2.9} A GPS $(E,Q)$ on an irreducible smooth curve $\wt C$ is called \emph{semistable} (resp.,
\emph{stable}), if for every nontrivial subsheaf $E'\subset E$ such that
$E/E'$ is torsion free outside $\{x_1,x_2\},$ we have
$$par\chi(E')-dim(Q^{E'})\leq
r(E')\cdot\frac{par\chi(E)-dim(Q)}{r(E)} \,\quad (\text{resp.,
$<$}),$$ where $Q^{E'}=q(E'_{x_1}\oplus E'_{x_2})\subset Q.$
\end{defn}

\begin{thm}[Theorem 2.24 of \cite{Su3}]\label{thm2.10} For any $\omega=(k, \{\vec n(x),\,\,\vec a(x)\}_{x\in I})$, there exists
a normal projective variety $\sP_{\omega}$ with at most rational singularities, which is the
coarse  moduli space of $s$-equivalence classes of semi-stable GPS on $\wt C$ with parabolic structures
at the points of $I$ given by the data $\omega$.
\end{thm}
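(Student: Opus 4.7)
My plan is to construct $\sP_\omega$ via geometric invariant theory, in direct parallel with the Quot-scheme construction of $\sU_{C,\omega}$ recalled before Definition \ref{defn2.7}. Fix an ample line bundle $\sO(1)=\sO_{\wt C}(c\cdot y)$ on $\wt C$, set $\chi=d+r(1-g)$, and let $\bold Q$ be the Quot scheme parametrising coherent quotients $V\otimes\sO_{\wt C}(-N)\to E\to 0$ of rank $r$ and degree $d$, where $V=\C^{P(N)}$ and $N\gg 0$ is chosen so that every $\omega$-semistable GPS $(E,Q)$ becomes $N$-regular; this rests on a standard boundedness argument for $\omega$-semistable GPS with fixed numerical invariants. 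Over $\bold Q$ I would form the fiber product
$$\sR=\left(\underset{x\in I}{\times_{\bold Q}}\,Flag_{\vec n(x)}(\sF_x)\right)\times_{\bold Q}\,Gr_r(\sF_{x_1}\oplus\sF_{x_2}),$$
where the last factor is the relative Grassmannian of $r$-dimensional quotients, which encodes the GPS datum $q:E_{x_1}\oplus E_{x_2}\twoheadrightarrow Q$. The ${\rm SL}(V)$-action on $\bold Q$ lifts naturally to $\sR$.

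Next I would extend the polarisation $\Theta_{\sR,\omega}$ of the smooth-curve case by a factor ${\rm det}(\sQ_{Gr})^{m}$ coming from the tautological quotient on the Grassmannian, with $m$ chosen so that the numerical Hilbert--Mumford weight, evaluated on a one-parameter subgroup of ${\rm SL}(V)$ associated with a weighted filtration of $V$, reproduces exactly the parabolic GPS slope inequality
$${\rm par}\chi(E')-{\rm dim}(Q^{E'})\le r(E')\cdot\frac{{\rm par}\chi(E)-{\rm dim}(Q)}{r(E)}$$
of Definition \ref{defn2.9}. The integer $m$ is forced by matching the Grassmannian contribution to the Hilbert--Mumford weight with the ``$-{\rm dim}(Q^{E'})$'' correction; the remaining coefficients are dictated by $k$, the flag types $\vec n(x)$, and the weights $\vec a(x)$ exactly as in the construction of $\sU_{C,\omega}$. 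A standard Quot-scheme argument in the style of Mehta--Seshadri and Simpson then identifies the GIT-semistable locus $\sR^{ss}_\omega$ with the locus of $\omega$-semistable GPS, and $s$-equivalence with closed-orbit equivalence.

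Applying GIT yields a projective good quotient $\sP_\omega:=\sR^{ss}_\omega/\!\!/{\rm SL}(V)$, whose closed points are in bijection with $s$-equivalence classes of $\omega$-semistable GPS, so $\sP_\omega$ is a coarse moduli space. For the regularity statement, I would observe that $\bold Q$ is smooth at quotients $E$ with the appropriate ${\rm Ext}^1$ vanishing after $N$ is increased, so that $\sR$ is smooth on the GIT-semistable locus (the flag and Grassmannian factors over this locus are smooth bundles). Boutot's theorem---a linearly reductive good quotient of a variety with rational singularities has rational singularities---then yields that $\sP_\omega$ has at most rational singularities, and in particular is normal.

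The main obstacle I anticipate is pinning down the weight $m$ in the polarisation so that the Hilbert--Mumford computation exactly reproduces Definition \ref{defn2.9}, together with the uniform boundedness required to choose $N$. A subtler point is that the underlying sheaf $E$ in a GPS need not be locally free at $\{x_1,x_2\}$: the Grassmannian factor supplies the extra datum $Q$ that absorbs the degeneration at the nodal preimages, and one must verify that the GIT-semistable locus nevertheless excludes the pathological $E$ whose torsion is supported outside $\{x_1,x_2\}$. Once these technical points are settled, the rest of the argument is a direct generalisation of the construction already used to produce $\sU_{C,\omega}$.
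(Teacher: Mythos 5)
Your GIT setup is the same as the one the paper recalls after the statement: the Quot scheme, the flag bundles, the relative Grassmannian $Grass_r(\sF_{x_1}\oplus\sF_{x_2})$ glued over $\bold Q$, a polarisation determined by $\omega$ whose Hilbert--Mumford weights reproduce Definition \ref{defn2.9}, and the good quotient $\sP_{\omega}=\wt{\sR}^{ss}_{\omega}/\!\!/{\rm SL}(V)$. That part is fine. The genuine gap is in your argument for normality and rational singularities. You claim that $\wt\sR$ is smooth along the GIT-semistable locus because ``the flag and Grassmannian factors over this locus are smooth bundles.'' This is false, and it fails precisely at the points the whole construction exists to handle: semistable GPS $(E,Q)$ in which $E$ has torsion supported at $x_1$ or $x_2$ (with the torsion injecting into $Q$, as in Notation \ref{nota2.11}(1)). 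These points are not pathologies to be excluded --- they are exactly the points of $\sP_{\omega}$ lying over the non-locally-free sheaves on the nodal curve $C$, i.e.\ over the boundary of $\sU_{C,\,\omega}$, and they cannot be removed without destroying projectivity of the quotient. Over the locus of $\bold Q$ where the universal quotient $\sF$ fails to be locally free at $x_1$ or $x_2$, the sheaf $\sF_{x_1}\oplus\sF_{x_2}$ is not a vector bundle (its fibre dimension jumps), so $Grass_r(\sF_{x_1}\oplus\sF_{x_2})\to\bold Q$ is not a smooth Grassmannian bundle there, and $\wt\sR^{ss}_{\omega}$ is genuinely singular. Your Ext-vanishing remark only addresses smoothness of $\bold Q$ itself, not of the relative Grassmannian over it.

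Consequently the reduction to Boutot's theorem, while the right idea, needs a substitute for smoothness: one must prove directly that the open subscheme $\sH\subset\wt\sR$ of Notation \ref{nota2.11} (which contains $\wt\sR^{ss}_{\omega}$ for every $\omega$) is reduced, normal, Cohen--Macaulay/Gorenstein with at most rational singularities. This is the content of Proposition 3.2 and Remark 3.1 of \cite{Su1}, and it is a nontrivial local analysis of determinantal type at the jumping points; it is the actual technical heart of the normality and rational-singularity statements in Theorem \ref{thm2.10}, not a routine consequence of generic smoothness. Once that is in place, your appeal to Boutot (a good quotient by a reductive group of a variety with rational singularities has rational singularities, and is normal) does finish the proof exactly as in the cited source.
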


Recall the construction of $\sP_{\omega}$. Let $Grass_r(\sF_{x_1}\oplus\sF_{x_2})\to \bold Q$ and
$$\widetilde{\sR}=Grass_r(\sF_{x_1}\oplus\sF_{x_2})\times_{\bold
Q}\sR\xrightarrow{\rho} \sR.$$  $\omega=(k, \{\vec n(x),\,\,\vec a(x)\}_{x\in I})$ determines
a polarization, which linearizes the ${\rm SL}(V)$-action on $\wt\sR$, such that the open set $\wt{\sR}^{ss}_{\omega}$ (resp. $\wt{\sR}^s_{\omega}$) of
GIT semistable (resp. GIT stable) points are precisely the set of semistable (resp. stable) GPS on $\wt C$ (see \cite{Su3}). Then $\sP_{\omega}$ is the GIT quotient
\ga{2.1} {\wt{\sR}^{ss}_{\omega}\xrightarrow{\psi}\wt{\sR}^{ss}_{\omega}//{\rm SL}(V):=\sP_{\omega}.}

\begin{nota}\label{nota2.11} Let $\sH\subset\wt{\sR}$ be the open subscheme
parametrising the generalised parabolic sheaves $E=(E,E_{x_1} \oplus
E_{x_2}\xrightarrow{q}Q)$ satisfying \begin{itemize}
\item [(1)] the torsion ${\rm Tor}\,E$ of $E$ is
supported on $\{x_1,x_2\}$ and $$q:({\rm Tor}\,E)_{x_1}\oplus ({\rm
Tor}\,E)_{x_2}\hookrightarrow Q$$
\item [(2)] if $N$ is large enough, then
$H^1(E(N)(-x-x_1-x_2))=0$ for all $E$ and $x\in \wt C$.
\end{itemize}
\end{nota}
Then $\sH$ is reduced, normal, Gorenstein with at most rational singularities (see Proposition 3.2 and Remark 3.1 of \cite{Su1}). Moreover,
for any data $\omega$, we have $\wt{\sR}^{ss}_{\omega}\subset \sH$ and, by Lemma 5.7 of \cite{Su1}, there is a morphism ${\rm Det}_{\sH}:\sH\to J^d_{\wt C}$
which extends determinant morphism on open set $\wt\sR_F\subset\sH$ of locally free sheaves, and induces a flat morphism
\ga{2.2} {{\rm Det}: \sP_{\omega}\to J^d_{\wt C}.}

\begin{nota}\label{nota2.12} For $L\in J^d_{\wt C}$, let $\sH^L={\rm Det}^{-1}(L)\subset\sH$,
$$\wt\sR_F^L={\rm Det}^{-1}(L)\subset\wt\sR_F, \quad (\wt\sR_{\omega}^{ss})^L={\rm Det}^{-1}(L)\subset \wt\sR_{\omega}^{ss}.$$
Then $\sP^L_{\omega}={\rm Det}^{-1}(L)\subset \sP_{\omega}$ is the GIT quotient
$$(\wt\sR_{\omega}^{ss})^L\xrightarrow{\psi}\sP^L_{\omega}=(\wt\sR_{\omega}^{ss})^L//{\rm SL}(V).$$
\end{nota}

\begin{thm}[Theorem 4.7 of \cite{SZ}]\label{thm2.13} For any $\omega=(k, \{\vec n(x),\,\vec a(x)\}_{x\in I})$, the moduli space
$\sP^L_{\omega}$ is of globally F-regular type.
\end{thm}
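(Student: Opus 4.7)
The plan is to establish globally $F$-regular type for $\sP^L_\omega = (\wt\sR_\omega^{ss})^L // SL(V)$ by working upstairs on the parameter space and descending through the GIT quotient. First spread out $\wt C$, $L$, and the whole construction of $\wt\sR$, $\sH$, and the quotient over a finitely generated $\Z$-subalgebra $A \subset \C$, so that the question becomes: for a dense set of closed points $s \in \Spec A$ of residue characteristic $p > 0$, show that the fiber $(\sP^L_\omega)_s$ is globally $F$-regular.

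The first key reduction is GIT descent. For an $SL(V)$-invariant open set $X \subset \wt\sR$ admitting a good quotient $X \to X // SL(V)$, an $SL(V)$-equivariant stably Frobenius splitting of $X$ along the pullback of any effective divisor descends, by taking invariants, to a stably Frobenius splitting of $X // SL(V)$ along the original divisor. Applied with $X = (\wt\sR_\omega^{ss})^L$, this reduces the problem to showing that $(\wt\sR_\omega^{ss})^L$ is globally $F$-regular modulo $p$. The second reduction uses the structure recalled in the text: $\wt\sR$ is a tower of a relative Grassmannian bundle $\text{Grass}_r(\sF_{x_1} \oplus \sF_{x_2})$ and relative flag bundles $\text{Flag}_{\vec n(x)}(\sF_x)$ over the Quot scheme $\bold Q$. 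Since Grassmannian and flag bundles with classical Schubert-type fibers preserve global $F$-regularity of the base (using Mathieu--Kumar canonical splittings compatible with all universal Schubert divisors), we reduce further to showing that the open subscheme $\sH^L$ of the Quot parameter over $L \in J^d_{\wt C}$ is of globally $F$-regular type.

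The heart of the argument is then the global $F$-regularity of $\sH^L$. My plan is to combine (a) the normal, Gorenstein, rational-singularity structure of $\sH$ from Proposition 3.2 of \cite{Su1}, which supplies a canonical anti-canonical divisor, with (b) the construction of an explicit Frobenius splitting of a suitable compactification of $\sH^L$ using a power of the theta line bundle as the splitting section, and (c) verification of stability of this splitting along the pullback of an arbitrary effective Cartier divisor. The cleanest route to step (c) is an induction on $\deg L$ (or on a parabolic weight vector) via Hecke transformations of GPS, similar to the Hecke manipulations that drive the recurrence \eqref{1.2}, reducing to a base case where $\sH^L$ admits a morphism to an affine-Grassmannian-type scheme whose globally $F$-regularity is already known.

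The main obstacle is step (c): producing a single splitting that is stable along \emph{every} effective divisor, not just along theta-type divisors. Once that is in hand, the spreading-out, the GIT descent, and the Grassmannian/flag-bundle reductions are formal applications of the standard machinery of globally $F$-regular type reviewed in \cite{Sm} and recalled in Section 2.
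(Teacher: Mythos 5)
This paper does not actually prove Theorem \ref{thm2.13}: it is imported verbatim as Theorem 4.7 of \cite{SZ}, and no argument for it appears anywhere in the present text, so there is no internal proof to compare yours against. Taken on its own terms, your outline follows the reductions one would expect --- spreading the construction out over a finitely generated $\Z$-algebra, descending Frobenius splittings through the GIT quotient $\psi:(\wt\sR^{ss}_{\omega})^L\to\sP^L_{\omega}$ via $\sO_{\sP^L_{\omega}}=(\psi_*\sO)^{{\rm SL}(V)}$, and peeling off the Grassmann and flag bundle towers over the Quot scheme. These steps are standard in spirit, though in characteristic $p$ the descent is not ``formal'': reductive groups are not linearly reductive there, so you must arrange the splitting maps equivariantly and check that invariants of a split surjection remain split, and you must also say what ``globally $F$-regular'' means for the quasi-projective space $(\wt\sR^{ss}_{\omega})^L$, which is not projective.

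The genuine gap is exactly where you locate it, and your proposed remedy does not close it. As written, step (c) asks you to produce, for \emph{every} effective divisor $D$ on the modulo $p$ reduction, a splitting of $\sO\to F^e_*\sO(D)$; an induction via Hecke transformations using theta-type splitting sections gives no mechanism for reaching an arbitrary $D$, so step (c) is essentially the whole theorem and remains unproved. The missing idea is the one-divisor criterion of \cite{Sm}: a normal projective variety is globally $F$-regular once it is stably Frobenius split along a \emph{single} effective divisor whose complement is strongly (locally) $F$-regular --- for instance smooth, or affine with mild singularities. With that criterion the problem collapses to exhibiting one well-chosen divisor (a theta-type divisor with tractable complement on $\sH^L$ or on the flag/Grassmann tower) together with a local statement off that divisor, which is the kind of argument that can actually be carried out; without it, ``verify stability along every effective Cartier divisor'' is not a step but a restatement of the conclusion.
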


When $C=C_1\cup C_2$ is reducible with two smooth irreducible
components $C_1$ and $C_2$ of genus $g_1$ and $g_2$ meeting at only
one point $x_0$ (which is the only node of $C$), we fix an ample
line bundle $\sO(1)$ of degree $c$ on $C$ such that
$deg(\sO(1)|_{C_i})=c_i>0$ ($i=1,2$). For any coherent sheaf $E$,
$P(E,n):=\chi(E(n))$ denotes its Hilbert polynomial, which has degree
$1$. We define the rank of $E$ to be
$$r(E):=\frac{1}{deg(\sO(1))}\cdot \lim \limits_{n\to\infty}\frac{P(E,n)}
{n}.$$ Let $r_i$ denote the rank of the restriction of $E$ to $C_i$
($i=1,2$), then
$$P(E,n)=(c_1r_1+c_2r_2)n+\chi(E),\quad r(E)=
\frac{c_1}{c_1+c_2}r_1+\frac{c_2}{c_1+c_2}r_2.$$ We say that $E$ is
of rank $r$ on $X$ if $r_1=r_2=r$, otherwise it will be said of rank
$(r_1,r_2)$.
Fix a finite set $I=I_1\cup I_2$ of smooth points on $C$, where
$I_i=\{x\in I\,|\,x\in C_i\}$ ($i=1,2$), and parabolic data $\omega=\{k,\vec
n(x),\vec a(x)\}_{x\in I}$ with
$$\ell:=\frac{k\chi-\sum_{x\in I}\sum^{l_x}_{i=1}d_i(x)r_i(x)}{r}$$
(recall $d_i(x)=a_{i+1}(x)-a_i(x)$, $r_i(x)=n_1(x)+\cdots+n_i(x)$). Let
\ga{2.3}{n^{\omega}_j=\frac{1}{k}\left(r\frac{c_j}{c_1+c_2}\ell+\sum_{x\in
I_j}\sum^{l_x}_{i=1}d_i(x)r_i(x)\right)\,\,\,(j=1,\,\,2).}

\begin{defn}\label{defn2.14} For any coherent sheaf $F$ of rank $(r_1,r_2)$, let
$$m(F):= \frac{r(F)-r_1}{k}\sum_{x\in I_1}a_{l_x+1}(x)+
\frac{r(F)-r_2}{k}\sum_{x\in I_2}a_{l_x+1}(x),$$ the modified
parabolic Euler characteristic and slop of $F$ are
$${\rm par}\chi_m(F):={\rm par}\chi(F)+m(F),\quad {\rm par}\mu_m(F):=\frac{{\rm par}\chi_m(F)}{r(F)}.$$
A parabolic sheaf $E$ is called semistable (resp. stable) if, for
any subsheaf $F\subset E$ such $E/F$ is torsion free, one has, with
the induced parabolic structure,
$${\rm par}\chi_m(F)\le \frac{{\rm par}\chi_m(E)}{r(E)}r(F)\quad (resp.<).$$
\end{defn}

\begin{thm}[Theorem 1.1 of \cite{Su2} or Theorem 2.14 of \cite{Su3}]\label{thm2.15} There
exists a reduced, seminormal projective scheme
$$\sU_C:=\sU_C(r,d,\sO(1),
\{k,\vec n(x),\vec a(x)\}_{x\in I_1\cup I_2})$$ which is the coarse
moduli space of $s$-equivalence classes of semistable parabolic
sheaves $E$ of rank $r$ and $\chi(E)=\chi=d+r(1-g)$ with parabolic structures
of type $\{\vec n(x)\}_{x\in I}$ and weights $\{\vec a(x)\}_{x\in
I}$ at points $\{x\}_{x\in I}$. The moduli space $\sU_C$ has at most
$r+1$ irreducible components.
\end{thm}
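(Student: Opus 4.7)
The plan is to construct $\sU_C$ as a GIT quotient, adapting the template used for irreducible curves to accommodate the fact that a coherent sheaf on the reducible nodal curve $C = C_1 \cup C_2$ can have different generic ranks $r_1, r_2$ on the two components. First, I would fix a sufficiently large $N$ and consider, for each admissible pair $(r_1,r_2)$ with $c_1 r_1 + c_2 r_2 = r(c_1+c_2)$, the Quot scheme $\mathbf{Q}_{(r_1,r_2)}$ parametrizing quotients $V\otimes\sO_C(-N)\twoheadrightarrow F\to 0$ whose restriction to $C_i$ has generic rank $r_i$ and Hilbert polynomial $(c_1r_1+c_2r_2)n+\chi$. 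The finitely many pairs $(r_1,r_2)$ for which the semistable locus is nonempty are precisely what will bound the number of irreducible components.

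Next, I would form the relative flag scheme $\sR = \times_{\mathbf{Q}} \mathrm{Flag}_{\vec n(x)}(\sF_x) \to \mathbf{Q}$ over each stratum of $\mathbf{Q}$, carrying the natural $\mathrm{SL}(V)$-action, and equip it with a polarization $\Theta_{\sR,\omega}$ assembled from the determinant of cohomology on each component, twisted by the universal parabolic flag line bundles weighted by the data $\omega$. The central step is a Hilbert--Mumford calculation establishing that GIT semistability with respect to this polarization is equivalent to the modified parabolic semistability of Definition \ref{defn2.14}. The modification term $m(F)$ in ${\rm par}\chi_m$ arises precisely as the correction needed because the polarization is built from contributions supported on the two components whose degrees $c_1,c_2$ are independent parameters; the calculation parallels the one for irreducible curves but must be performed component by component and then reassembled.

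Having identified the semistable locus $\sR^{ss}_\omega$, I would appeal to Mumford's and Seshadri's GIT theorems to form the projective quotient $\sU_C = \sR^{ss}_\omega /\!/ \mathrm{SL}(V)$, which corepresents the moduli functor with closed points in bijection with $s$-equivalence classes of polystable parabolic sheaves via the usual orbit-closure criterion. Seminormality would follow by showing that the semistable locus is seminormal as an open subscheme of a Quot scheme over the seminormal nodal curve $C$, and by invoking the fact that good quotients of seminormal schemes remain seminormal; reducedness descends from the same statement on $\sR^{ss}_\omega$.

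The main obstacle I expect is verifying that the number of irreducible components of $\sU_C$ is at most $r+1$. The Diophantine equation $c_1 r_1 + c_2 r_2 = r(c_1+c_2)$ cuts out a line in $\mathbb{Z}^2$, so the true bound must come from the compatibility of $(r_1,r_2)$ with the existence of semistable sheaves: combining the $m(F)$-twisted slope inequality with the constraint $0 \le r_i$ and the fact that the restriction of a semistable parabolic sheaf of global rank $r$ cannot deviate arbitrarily from rank $r$ on either component, one restricts to at most $r+1$ admissible pairs, naturally indexed by the rank drop on one component, say $r - \min(r_1,r_2) \in \{0,1,\ldots,r\}$. Showing that each admissible $(r_1,r_2)$ yields at most one irreducible component of $\sR^{ss}_\omega$, and that these fit together after the quotient to give at most $r+1$ components of $\sU_C$, is the delicate combinatorial part of the argument.
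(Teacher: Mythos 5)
The paper offers no proof of this statement: it is quoted verbatim from Theorem~1.1 of \cite{Su2} (see also Theorem~2.14 of \cite{Su3}), and the surrounding text only recalls the GIT construction of the \emph{normalization} $\sP_{\omega}$. Your overall template --- Quot scheme with relative flag schemes, a polarization built from determinant of cohomology and the parabolic flag bundles, a Hilbert--Mumford computation identifying GIT semistability with the modified semistability of Definition~\ref{defn2.14}, and a good quotient --- is indeed the shape of the construction in \cite{Su2}. (Your seminormality argument is too optimistic as stated --- in \cite{Su1,Su2} seminormality is extracted from the normalization map $\sP_{\omega}\to\sU_C$ and the gluing along the node, not from seminormality of the Quot scheme --- but that is a repairable gap.)

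The genuine error is in your account of the bound of $r+1$ irreducible components. You index components by rank pairs $(r_1,r_2)$ on the line $c_1r_1+c_2r_2=r(c_1+c_2)$ and by the ``rank drop'' $r-\min(r_1,r_2)$. This cannot be right: the number of nonnegative integer solutions of that Diophantine equation depends on $c_1,c_2$ (for $\gcd(c_1,c_2)=1$ the solutions are $(r+tc_2,\,r-tc_1)$, roughly $r/c_1+r/c_2+1$ of them), whereas the bound $r+1$ is uniform. Moreover the theorem explicitly parametrizes sheaves ``of rank $r$'', which in the paper's convention means $r_1=r_2=r$; non-constant bi-ranks enter only through the \emph{test subsheaves} $F$ in Definition~\ref{defn2.14}, which is precisely why the correction term $m(F)$ is needed there. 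The $r+1$ components are instead indexed by the splitting of the Euler characteristic: writing $\chi_j=\chi(E|_{C_j})$ (equivalently, by the local type $E_{x_0}\cong \sO_{x_0}^{a}\oplus\mathfrak m_{x_0}^{\,r-a}$ of the rank-$r$ torsion-free stalk at the node), semistability forces $n_j^{\omega}\le\chi_j\le n_j^{\omega}+r$ with $\chi_1+\chi_2=\chi+r$, an interval containing at most $r+1$ integers. This is exactly the decomposition the paper records in Theorem~\ref{thm2.17} for the normalization $\sP_{\omega}=\bigsqcup\sP_{\chi_1,\chi_2}$, and the components of $\sU_C$ are in bijection with those of $\sP_{\omega}$. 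As written, your combinatorial step would not close.
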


The normalization of $\sU_C$ is a moduli space of semistable GPS on $\wt C=C_1\bigsqcup C_2$ with parabolic structures at points $x\in I$. Recall

\begin{defn}\label{defn2.16} A GPS $(E,E_{x_1}\oplus E_{x_2}\xrightarrow{q}Q)$ is called semistable (resp.,
stable), if for every nontrivial subsheaf $E'\subset E$ such that
$E/E'$ is torsion free outside $\{x_1,x_2\},$ we have, with the
induced parabolic structures at points $\{x\}_{x\in I}$,
$$par\chi_m(E')-dim(Q^{E'})\leq
r(E')\cdot\frac{par\chi_m(E)-dim(Q)}{r(E)} \,\quad (\text{resp.,
$<$}),$$ where $Q^{E'}=q(E'_{x_1}\oplus E'_{x_2})\subset Q.$
\end{defn}

\begin{thm}[Theorem 2.1 of \cite{Su2} or Theorem 2.26 of \cite{Su3}]\label{thm2.17} For any data $\omega=(\{k,\vec n(x),\,\,\vec a(x)\}_{x\in I_1\cup I_2},\sO(1))$,
the coarse  moduli space $\sP_{\omega}$ of $s$-equivalence classes of semi-stable GPS on $\wt C$ with parabolic structures
at the points of $I$ given by the data $\omega$ is a disjoint union of at most $r+1$ irreducible, normal projective varieties $\sP_{\chi_1,\chi_2}$
( $\chi_1+\chi_2=\chi+r$, $n_j^{\omega}\le\chi_j\le n_j^{\omega}+r$) with at most rational singularities.
\end{thm}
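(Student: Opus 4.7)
The plan is to construct $\sP_\omega$ for $\wt C = C_1 \sqcup C_2$ as a GIT quotient of a parameter scheme built component by component. I would fix a large $N$ and consider the product of Quot schemes $\bold Q_1 \times \bold Q_2$, where $\bold Q_j$ parameterizes rank-$r$ quotients of $V_j \otimes \sO_{C_j}(-N)$ with Euler characteristic $\chi_j$. Over $\bold Q_1 \times \bold Q_2$, assemble the relative flag schemes at the parabolic points of $I_j$ together with the relative Grassmannian $\mathrm{Grass}_r(\sF_{x_1} \oplus \sF_{x_2})$ parameterizing the gluing quotient $q : E_{x_1} \oplus E_{x_2} \twoheadrightarrow Q$. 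The reductive group $\mathrm{SL}(V_1) \times \mathrm{SL}(V_2)$ acts on the resulting scheme $\wt\sR_{\chi_1,\chi_2}$, and a polarization encoding the data $\omega$ together with the correction $m(F)$ of Definition \ref{defn2.14} is chosen so that GIT (semi)stability reproduces Definition \ref{defn2.16}.

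Next I would identify the components. A GPS on $\wt C$ has rank $r$ on each side, and its Euler characteristics on $C_1$ and $C_2$ satisfy $\chi_1 + \chi_2 = \chi + r$, the $+r$ arising from the rank of the gluing quotient $Q$ when one passes from $C$ to $\wt C$. The crux is that GIT semistability forces $n_j^\omega \le \chi_j \le n_j^\omega + r$. To prove this I would test Definition \ref{defn2.16} against the canonical subsheaf $E_j \subset E$ obtained from the $C_j$-component (extended by zero across the node), computing ${\rm par}\chi_m(E_j)$ against ${\rm par}\chi_m(E)$ using \eqref{2.3}; the modification $m(F)$ is tuned so that $\chi_j$ lying outside the claimed interval produces a violating subsheaf. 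Since $\chi_j$ is an integer, this leaves at most $r+1$ admissible pairs $(\chi_1,\chi_2)$, labelling the disjoint components $\sP_{\chi_1,\chi_2}$.

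The remaining task is geometric: each $\sP_{\chi_1,\chi_2}$ should be irreducible, normal, projective, with at most rational singularities. Following the strategy of Proposition 3.2 and Remark 3.1 of \cite{Su1}, I would work on the open subscheme $\sH_{\chi_1,\chi_2} \subset \wt\sR_{\chi_1,\chi_2}$ consisting of GPS whose torsion is supported on $\{x_1, x_2\}$ and injects into $Q$. This scheme fibres as a smooth Grassmannian bundle over a product of flag-Quot schemes on the smooth curves $C_1$ and $C_2$, hence inherits irreducibility, normality, and the property of being Gorenstein with rational singularities from the two smooth-curve factors. Since $\wt\sR^{ss}_{\omega,\chi_1,\chi_2} \subset \sH_{\chi_1,\chi_2}$, passage to the GIT quotient transfers these properties to $\sP_{\chi_1,\chi_2}$.

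I expect the main obstacle to be the slope inequality establishing $n_j^\omega \le \chi_j \le n_j^\omega + r$: it is a careful bookkeeping with the parabolic weights at all points of $I_j$, the dimension of $Q^{E_j} \subset Q$, and the correction $m(F)$ from Definition \ref{defn2.14}, which is introduced precisely so that this computation closes cleanly. Once this bound is in hand, the remainder of the construction parallels Theorem \ref{thm2.10} in the irreducible case, executed component-by-component on $C_1$ and $C_2$.
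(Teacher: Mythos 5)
Your outline reproduces, almost step for step, the construction that the paper recalls immediately after the statement (the paper itself gives no proof of Theorem \ref{thm2.17}; it imports the result from Theorem 2.1 of \cite{Su2} and Theorem 2.26 of \cite{Su3}): the product of Quot schemes $\textbf{Q}_1\times\textbf{Q}_2$, the relative flag schemes at the points of $I_j$, the relative Grassmannian $Grass_r(\sE_{x_1}\oplus\sE_{x_2})$, a polarization encoding $\omega$, and the open set $\sH$ of Notation \ref{nota2.11} used to transport irreducibility, normality and rational singularities to the GIT quotient via Proposition 3.2 and Remark 3.1 of \cite{Su1}. Deriving $\chi_1+\chi_2=\chi+r$ and the range $n_j^{\omega}\le\chi_j\le n_j^{\omega}+r$ by testing Definition \ref{defn2.16} against the subsheaves $E_j$ extended by zero across the node, with the correction $m(F)$ of Definition \ref{defn2.14} making the bookkeeping close, is also the mechanism used in \cite{Su2}.

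The one genuine gap is your choice of group. You propose to quotient by ${\rm SL}(V_1)\times{\rm SL}(V_2)$, whereas the paper uses $G=({\rm GL}(V_1)\times {\rm GL}(V_2))\cap {\rm SL}(V_1\oplus V_2)$. The difference is a one-dimensional central torus, and it is not cosmetic: a pair of scalars $(\lambda,\mu)$ acts trivially on the Quot and flag schemes but acts on $\sE_{x_1}\oplus\sE_{x_2}$ by ${\rm diag}(\lambda,\mu)$, hence nontrivially on the Grassmannian factor whenever $\lambda\neq\mu$. Two gluing quotients $q$ and $q\circ{\rm diag}(\lambda,\mu)$ define isomorphic GPS but lie generically in different ${\rm SL}(V_1)\times{\rm SL}(V_2)$-orbits, so the quotient by your group carries a residual $\mathbb{G}_m$-action and fails to be a coarse moduli space of $s$-equivalence classes. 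Moreover, it is precisely the Hilbert--Mumford analysis of one-parameter subgroups lying in this central torus that matches GIT semistability on $\wt\sR$ with Definition \ref{defn2.16} and produces the constraint $n_j^{\omega}\le\chi_j\le n_j^{\omega}+r$ on the GIT side; with ${\rm SL}(V_1)\times{\rm SL}(V_2)$ alone that constraint has no GIT counterpart, and the identification of the GIT-semistable locus with the set of semistable GPS breaks down. Replacing your group by $G$ repairs this, and the rest of your plan then runs as in \cite{Su2}.
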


For fixed $\chi_1$, $\chi_2$ satisfying $\chi_1+\chi_2=\chi+r$ and $n_j^{\omega}\le\chi_j\le n_j^{\omega}+r$ ($j=1,\,2$), recall the construction
of $\sP_{\omega}=\sP_{\chi_1,\chi_2}$. Let
$$P_i(m)=c_irm+\chi_i,\quad \sW_i=\sO_{C_i}(-N),\quad V_i=\Bbb
C^{P_i(N)}$$ where
$\sO_{C_i}(1)=\sO (1)|_{C_i}$ has degree $c_i$. Consider the Quot schemes $\textbf{Q}_i=Quot(V_i\otimes\sW_i,
P_i)$, the universal quotient
$V_i\otimes\sW_i\to \sF^i\to 0$ on $C_i\times \textbf{Q}_i$ and
the relative flag scheme
$$\sR_i=\underset{x\in I_i}{\times_{\textbf{Q}_i}}
Flag_{\vec n(x)}(\sF^i_x)\to \textbf{Q}_i.$$ Let
$\sF=\sF^1\oplus\sF^2$ denote direct sum of pullbacks of $\sF^1$,
$\sF^2$ on $$\wt C\times
(\textbf{Q}_1\times\textbf{Q}_2)=(C_1\times\textbf{Q}_1)\sqcup(C_2\times\textbf{Q}_2).$$
Let $\sE$ be the pullback of $\sF$ to $\wt
C\times(\sR_1\times\sR_2)$, and
$$\rho:\widetilde{\sR}=Grass_r(\sE_{x_1}\oplus\sE_{x_2})\to\sR=\sR_1\times\sR_2\to
\textbf{Q}=\textbf{Q}_1\times\textbf{Q}_2.$$
For the given $\omega=(\{k,\vec n(x),\,\vec a(x)\}_{x\in I_1\cup I_2},\mathcal{O}(1))$, let
$\wt{\sR}_{\omega}^{ss}$ (resp. $\wt{\sR}_{\omega}^{s}$) denote the open set of GIT semi-stable (resp. GIT stable) points under
action of $G=({\rm GL}(V_1)\times {\rm GL}(V_2))\cap {SL}(V_1\oplus V_2)$ on $\wt\sR$ respect to the polarization determined by $\omega$.
Let $\sH\subset\wt{\sR}$ be the open set defined in Notation \ref{nota2.11}, then for any data $\omega$ we have
$\wt{\sR}_{\omega}^{s}\subset\wt{\sR}_{\omega}^{ss}\subset\sH.$
The moduli space in Theorem \ref{thm2.17} is nothing but the GIT quotient
$$\psi:\wt{\sR}_{\omega}^{ss}\to \sP_{\omega}:=\wt{\sR}_{\omega}^{ss}//G.$$
There exists a morphism $\hat{\rm Det}_{\sH}: \sH\to J^d_{\wt C}=J^{d_1}_{C_1}\times J^{d_2}_{C_2}$, which extends
$$\hat{\rm Det}_{\sH_F}: \sH_F\to J^{d_1}_{C_1}\times J^{d_2}_{C_2},\quad (E,Q)\mapsto ({\rm det}(E|_{C_1}), {\rm det}(E|_{C_2}))$$
on the open set $\sH_F\subset \sH$ of GPB (i.e. GPS $(E,Q)$ with $E$ locally free) and induces a flat determinant morphism
$${\rm Det}_{\sP_{\omega}}:\sP_{\omega}\to J^d_{\wt C}=J^{d_1}_{C_1}\times J^{d_2}_{C_2}$$
(see page 46 of \cite{Su3} for detail). In fact, for any $L\in J^d_{\wt C}=J^{d_1}_{C_1}\times J^{d_2}_{C_2}$, let
\ga{2.4} {\sP_{\omega}^L:={\rm Det}_{\sP_{\omega}}^{-1}(L)\subset \sP_{\omega}}

\begin{thm}[Theorem 4.15 of \cite{SZ}]\label{thm2.18} For any data $$\omega=(\{k,\vec n(x),\,\,\vec a(x)\}_{x\in I_1\cup I_2},\sO(1))$$
and integers $\chi_1$, $\chi_2$ satisfying $\chi_1+\chi_2=\chi+r$, $n_j^{\omega}\le\chi_j\le n_j^{\omega}+r$ ($j=1,\,2$), let $\sP^L_{\omega}$ be
the coarse  moduli space  of $s$-equivalence classes of semi-stable GPS $E=(E_1, E_2)$ on $\wt C$ with fixed determinant $L$, $\chi(E_j)=\chi_j$ and parabolic structures at the points of $I$ given by the data $\omega$. Then $\sP^L_{\omega}$ is of globally $F$-regular type.
\end{thm}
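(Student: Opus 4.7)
The plan is to follow the strategy used for the irreducible case in Theorem \ref{thm2.13}, transported to the reducible nodal setting $\wt C=C_1\sqcup C_2$. Since $\sP^L_{\omega}$ is the GIT quotient of $(\wt\sR^{ss}_{\omega})^L\subset \wt\sR^{ss}_{\omega}$ by the reductive group $G=({\rm GL}(V_1)\times {\rm GL}(V_2))\cap {\rm SL}(V_1\oplus V_2)$, and since globally $F$-regular type descends along good quotients by reductive groups (as used implicitly in the proof of Theorem \ref{thm2.13}, compare the arguments of \cite{Sm}), it suffices to exhibit a globally $F$-regular type ambient variety containing $(\wt\sR^{ss}_{\omega})^L$ as an open subset. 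The natural candidate is $\sH^L:=\hat{\rm Det}_{\sH}^{-1}(L)\subset\sH$, where $\sH$ is the open subset of Notation \ref{nota2.11}.

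Next, I would exploit the product structure of $\wt C=C_1\sqcup C_2$. Any object of $\sH$ decomposes as $E=E_1\oplus E_2$ with $E_i$ supported on $C_i$, together with an $r$-dimensional quotient $E_{x_1}\oplus E_{x_2}\twoheadrightarrow Q$. Accordingly, the Quot scheme factors as $\textbf{Q}=\textbf{Q}_1\times\textbf{Q}_2$, the relative flag scheme as $\sR=\sR_1\times\sR_2$, and $\wt\sR={\rm Grass}_r(\sE_{x_1}\oplus\sE_{x_2})\to\sR$ is a Grassmannian bundle. Under the decomposition $J^d_{\wt C}=J^{d_1}_{C_1}\times J^{d_2}_{C_2}$, the fiber $\sH^L=\sH^{(L_1,L_2)}$ should appear as a Grassmannian bundle over a product of component-wise fixed-determinant loci $\sH_i^{L_i}$ attached to each smooth curve $C_i$.

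The argument then rests on three ingredients, all robust under modulo $p$ reduction: (i) each smooth-curve fixed-determinant building block $\sH_i^{L_i}$ is of globally $F$-regular type, an intermediate step in the proof of Theorems \ref{thm2.8} and \ref{thm2.13} in \cite{SZ}; (ii) products of globally $F$-regular (type) varieties are again globally $F$-regular (type); (iii) Grassmannian bundles over globally $F$-regular (type) bases remain globally $F$-regular (type), since the fibers are globally $F$-regular homogeneous spaces and one may glue stable Frobenius $D$-splittings from base and fiber along the Stein factorization. Combining (i)--(iii) would give $\sH^L$ of globally $F$-regular type, whence the GIT quotient by $G$ yields the stated conclusion for $\sP^L_{\omega}$.

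The main obstacle I expect is making the determinant condition interact cleanly with the decomposition of $\sP_{\omega}$ into its up to $r+1$ irreducible components indexed by $(\chi_1,\chi_2)$ with $\chi_1+\chi_2=\chi+r$ and $n_j^{\omega}\le \chi_j\le n_j^{\omega}+r$. The morphism $\hat{\rm Det}_{\sH}$ is only defined canonically on the locally free locus $\sH_F$ and then extended to $\sH$, so one must verify that its restriction to the $(\chi_1,\chi_2)$-component is flat with geometrically irreducible fibers, and that this structure persists after reduction modulo $p$ for a dense set of primes. Producing the explicit stable Frobenius $D$-splittings that simultaneously respect the Grassmannian fibration, the determinant fibration, and the component decomposition is the technical heart of the argument, and will likely require a careful choice of anticanonical section on an appropriate compactification of $\sH^L$, extending the construction used in the irreducible case.
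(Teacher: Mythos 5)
A preliminary but important point: the paper does not prove Theorem \ref{thm2.18} at all. It is imported verbatim as Theorem 4.15 of \cite{SZ} and used as a black box, so there is no internal proof to compare your argument against; your proposal has to stand on its own. Judged that way, its skeleton (decompose over $\wt C=C_1\sqcup C_2$, control an ambient parameter space, descend through the GIT quotient) is a reasonable guess at the architecture, but each of the three load-bearing ingredients has a genuine gap. For (i): the ``building blocks'' $\sH_i^{L_i}$ are not moduli spaces but quasi-projective open subsets of flag bundles over Quot schemes, of dimension growing with the twist $N$; their being of globally $F$-regular type is not an intermediate step of \cite{SZ} that you can cite, and the notion as set up in Definition \ref{defn2.2} and exploited in Theorem \ref{thm2.3} is a statement about projective varieties. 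The natural projective ambient space $\wt\sR$ is a Grassmannian bundle over a product of flag bundles over Quot schemes, which in general is reducible and non-reduced; only the open locus $\sH$ of Notation \ref{nota2.11} is normal with rational singularities, so ``exhibit a globally $F$-regular ambient variety containing $(\wt\sR^{ss}_{\omega})^L$'' has no evident candidate. For (iii): the assertion that a Grassmannian bundle over a globally $F$-regular type base is again of globally $F$-regular type is not a known general fact and is, in effect, the whole content you would need to supply. Global $F$-regularity requires stable Frobenius $D$-splitting along \emph{every} effective divisor, and effective divisors on $Grass_r(\sE_{x_1}\oplus\sE_{x_2})$ do not decompose into base and fiber contributions, so ``gluing splittings from base and fiber'' is not an available construction. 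Indeed, since globally $F$-regular type projective varieties are of Fano type and hence Mori dream spaces, and there are projectivized (even toric) vector bundles over toric bases that are not Mori dream spaces, the statement is false at the level of generality at which you invoke it.

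The descent step is also not formal. In characteristic $p$ the group $G=({\rm GL}(V_1)\times{\rm GL}(V_2))\cap{\rm SL}(V_1\oplus V_2)$ is reductive but not linearly reductive, so after reduction modulo $p$ there is no Reynolds operator splitting $\sO_{\sP^L_{\omega}}\to\psi_*\sO_{(\wt\sR^{ss}_{\omega})^L}$, and the identification of the section ring of $\sP^L_{\omega}$ with a ring of invariants (hence, one hopes, a direct summand of something strongly $F$-regular) requires controlling the unstable locus and supplying good-filtration or Frobenius-splitting input for the $G$-action in characteristic $p$. Your final paragraph correctly anticipates that the interaction of $\hat{\rm Det}_{\sH}$ with the component decomposition indexed by $(\chi_1,\chi_2)$ and with reduction modulo $p$ is delicate, but the proposal defers exactly the constructions (explicit stable Frobenius $D$-splittings compatible with the fibrations) that constitute the proof. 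As written, the argument replaces the content of \cite{SZ} by three unproved assertions, at least one of which fails in general.
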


\section{Vanishing Theorems on Moduli spaces and recurrence relations}

In this section, we prove vanishing theorems on moduli spaces of parabolic sheaves on curves with at most one node and establish the
recurrence relations of dimension of generalized theta functions. As an immediate application of globally $F$-regular type of the moduli spaces of parabolic sheaves on a
smooth projective curve $C$, we have the following vanishing theorem

\begin{thm}\label{thm3.1} Let $\sU_{C,\,\omega}$ be the moduli
space of semistable parabolic bundles
of rank $r$ and degree $d$ on a smooth projective curve $C$ with parabolic structures determined by $\omega=(k,\{\vec n(x),\vec a(x)\}_{x\in I})$.
Then $${\rm H}^i(\sU_{C,\,\omega},\sL)=0 \quad \forall\,\,i>0$$
for any ample line bundle $\sL$ on $\sU_{C,\,\omega}$.
\end{thm}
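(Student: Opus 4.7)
The plan is to derive the vanishing from Frobenius splitting via reduction modulo $p$. By Theorem \ref{thm2.8}, $\sU_{C,\,\omega}$ is of F-split type provided $J^0_C$ is of F-split type, and the latter is a standard input: for any abelian variety over a field of characteristic zero, a dense set of primes give ordinary (hence Frobenius split) reduction.

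Granted this, I would choose an integral model of the pair $(\sU_{C,\,\omega},\sL)$ over a finitely generated $\mathbb{Z}$-subalgebra $A\subset K$, producing a flat projective family $\pi:\sX\to S=\Spec A$ together with a relatively ample line bundle whose generic fiber is $\sL$. By the F-split-type hypothesis, there is a dense set of closed points $s\in S$ of residue characteristic $p_s>0$ for which the fiber $X_s$ is Frobenius split and the restriction $\sL_s$ of the relatively ample bundle remains ample.

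For any such $s$, tensoring the splitting $\sO_{X_s}\hookrightarrow F_*\sO_{X_s}$ with $\sL_s$ and applying the projection formula yields a split injection $\sL_s\hookrightarrow F_*(\sL_s^{\,p_s})$. Iterating this injection gives, for every $e\ge 1$ and every $i\ge 0$,
$$H^i(X_s,\sL_s)\hookrightarrow H^i(X_s,\sL_s^{\,p_s^e}).$$
Since $\sL_s$ is ample, Serre vanishing kills the right-hand side for $i>0$ and $e$ sufficiently large, so $H^i(X_s,\sL_s)=0$ for all $i>0$ at every such $s$. Upper semicontinuity of cohomology dimensions along the flat family $\pi$ then propagates this vanishing to the generic fiber, yielding $H^i(\sU_{C,\,\omega},\sL)=0$ for $i>0$ as required.

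The main obstacle is simply securing the hypothesis of Theorem \ref{thm2.8}, i.e.\ that $J^0_C$ has Frobenius-split reduction at a dense set of primes; once this is taken as an input, everything else reduces to the standard F-split version of Kodaira vanishing (essentially Mehta--Ramanathan and Kumar--Lauritzen--Thomsen), combined with the routine spreading-out and upper-semicontinuity argument. An alternative route via the determinant fibration $\sU_{C,\,\omega}\to J^d_C$ using the unconditional globally F-regular type of the fibers $\sU^L_{C,\,\omega}$ (Theorem \ref{thm2.8}) is tempting, but it leaves one to control the higher cohomology of $\mathrm{Det}_\ast\sL$ on the Jacobian, which is not automatic; hence I would prefer the direct F-split approach above.
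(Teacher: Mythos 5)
Your argument has a genuine gap at its very first step: the hypothesis of Theorem \ref{thm2.8} that $J^0_C$ is of F-split type is \emph{not} a standard input. By Mehta--Srinivas, an abelian variety in characteristic $p$ is Frobenius split if and only if it is ordinary, so what you are invoking is the density of ordinary reduction for the Jacobian of an arbitrary curve over a field of characteristic zero. This is the ordinary reduction conjecture, which is known for elliptic curves (Serre) and abelian surfaces (Ogus) but is open in general for dimension $\ge 3$ --- in particular for Jacobians of curves of genus $\ge 3$ defined over number fields. This is precisely why the paper states Theorem \ref{thm2.8} unconditionally only for the fixed-determinant moduli space $\sU^L_{C,\,\omega}$ and only conditionally for $\sU_{C,\,\omega}$ itself. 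The remainder of your argument (spreading out, the split injection $H^i(X_s,\sL_s)\hookrightarrow H^i(X_s,\sL_s^{p_s^e})$, Serre vanishing, and semicontinuity) is the standard Mehta--Ramanathan mechanism and is fine, but it rests on an unproven foundation.

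You also dismiss the route the paper actually takes for the wrong reason: there is no need to control $R^i\mathrm{Det}_*\sL$. The paper does not push forward along $\mathrm{Det}$; it pulls back along the finite $r^{2g}$-fold cover $J^0_C\times \sU^L_{C,\,\omega}\to \sU_{C,\,\omega}$, $(L_0,E)\mapsto L_0\otimes E$, which reduces the claim to vanishing of $H^i$ of an ample line bundle on the product. Since $\sU^L_{C,\,\omega}$ is unconditionally of globally F-regular type, $H^1(\sU^L_{C,\,\omega},\sO)=0$, so any line bundle on the product decomposes as $\sL_1\boxtimes\sL_2$ with each factor ample; K\"unneth then reduces everything to Theorem \ref{thm2.3}(2) on $\sU^L_{C,\,\omega}$ and classical vanishing for ample line bundles on the abelian variety $J^0_C$. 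To repair your proof you should adopt this covering trick (or otherwise replace the F-split-type input on $\sU_{C,\,\omega}$ by the unconditional globally F-regular type of the fibers of $\mathrm{Det}$).
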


\begin{proof} Let ${\rm Det}: \sU_{C,\,\omega}\to J^d_C$ and $\sU_{C,\,\omega}^L={\rm Det}^{-1}(L)$, then morphism
$$J^0_C\times \sU_{C,\,\omega}^L\to \sU_{C,\,\omega},\quad (L_0, E)\mapsto L_0\otimes E$$
is a $r^{2g}$-fold cover. Then it is enough to show $H^i(J^0_C\times \sU_{C,\,\omega}^L, \sL)=0$ for $i>0$ and any ample
line bundle $\sL$. Since $\sU_{C,\,\omega}^L$ is of globally $F$-regular type, $H^1(\sU_{C,\,\omega}^L, \sO_{\sU_{C,\,\omega}^L})=0$ by Theorem \ref{thm2.3} (2).
Thus $\sL=\sL_1\otimes\sL_2$ where $\sL_1$ (resp. $\sL_2$) is an ample bundle on $J^0_C$ (resp. $\sU_{C,\,\omega}^L$) and
$$H^i(J^0_C\times \sU_{C,\,\omega}^L, \sL)=H^i(J^0_C,\sL_1)\otimes H^0( \sU_{C,\,\omega}^L,\sL_2)=0.$$
\end{proof}

For any irreducible curve $C$ with at most one node $x_0\in C$, there is an algebraic family of ample line bundles $\Theta_{\sU_{C,\,\omega}}$ on
$\sU_{C,\,\omega}$ when
\ga{3.1}{\text{$\ell:=\frac{k\chi-\sum_{x\in
I}\sum^{l_x}_{i=1}d_i(x)r_i(x)}{r}$ is an integer}}
(see Theorem 3.1 of \cite{Su3}). Then Theorem \ref{thm3.1} implies that the number
\ga{3.2} {D_g(r,d,\omega)=dim H^0(\sU_{C,\,\omega},\Theta_{\sU_{C,\,\omega}})}
is independent of $C$, parabolic points $x\in I$ (of course, depending on the number $|I|$ of parabolic points) and the choice of $\Theta_{\sU_{C,\,\omega}}$ in the algebraic family when $C$ is smooth.

When $C$ has one node $x_0\in C$, the moduli spaces $\sU_{C,\,\omega}$ are only seminormal (see Theorem 4.2 of \cite{Su1}) and its normalization
$$\phi:\sP_{\omega}\to \sU_{C,\,\omega}$$
is the coarse  moduli space $\sP_{\omega}$ of $s$-equivalence classes of semi-stable GPS on $\wt C\xrightarrow{\pi} C$ with generalized parabolic structures on $\pi^{-1}(x_0)=x_1+x_2$ and parabolic structures at the points of $\pi^{-1}(I)$ given by the data $\omega$ (see Proposition 2.1 of \cite{Su1}, or Proposition 3.1
of \cite{Su2}).

\begin{lem}[Lemma 5.5 of \cite{Su1}]\label{lem3.2} For any line bundle $\sL$ on $\sU_{C,\,\omega}$,
$$\phi^*:H^1(\sU_{C,\,\omega},\sL)\to H^1(\sP_{\omega},\phi^*\sL)$$
is injective.
\end{lem}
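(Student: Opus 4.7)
The plan is to translate injectivity of $\phi^*$ into a surjectivity on global sections via the normalization short exact sequence. Set $\sC:=\phi_*\sO_{\sP_{\omega}}/\sO_{\sU_{C,\,\omega}}$; this is a coherent sheaf supported on the non-normal locus of $\sU_{C,\,\omega}$. Since $\sL$ is a line bundle (hence flat), tensoring preserves exactness and produces
$$0\to\sL\to\phi_*\phi^*\sL\to\sL\otimes\sC\to 0.$$
The morphism $\phi$ is finite, so $H^i(\sU_{C,\,\omega},\phi_*\phi^*\sL)=H^i(\sP_{\omega},\phi^*\sL)$, and the resulting long exact cohomology sequence reads
$$H^0(\sP_{\omega},\phi^*\sL)\xrightarrow{\beta}H^0(\sU_{C,\,\omega},\sL\otimes\sC)\to H^1(\sU_{C,\,\omega},\sL)\xrightarrow{\phi^*}H^1(\sP_{\omega},\phi^*\sL).$$
Hence $\ker\phi^*=\operatorname{coker}\beta$, and the lemma reduces to the statement that $\beta$ is surjective.

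To make $\sC$ concrete I would invoke the seminormality of $\sU_{C,\,\omega}$ supplied by Theorem \ref{thm2.6}. Let $D\subset\sU_{C,\,\omega}$ be the non-normal locus with its reduced scheme structure and set $\wt D=\phi^{-1}(D)_{\mathrm{red}}$. The standard pushout presentation for seminormal schemes, $\sO_{\sU_{C,\,\omega}}=\phi_*\sO_{\sP_{\omega}}\times_{\phi_*\sO_{\wt D}}\sO_D$, identifies $\sC\cong\phi_*\sO_{\wt D}/\sO_D$ and, after tensoring with the line bundle $\sL$, gives $\sL\otimes\sC\cong\phi_*(\phi^*\sL|_{\wt D})/\sL|_D$. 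Surjectivity of $\beta$ is then equivalent to surjectivity of the difference map
$$H^0(\sP_{\omega},\phi^*\sL)\oplus H^0(D,\sL|_D)\xrightarrow{(s,t)\mapsto s|_{\wt D}-\phi^*t}H^0(\wt D,\phi^*\sL|_{\wt D}).$$

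The main obstacle is this final surjection, where genuine moduli-theoretic input is unavoidable. Geometrically, $D$ parametrises semistable parabolic sheaves on $C$ admitting more than one lift to a semistable GPS on $\wt C$, and the map $\wt D\to D$ records the choice of gluing data across the node $x_0$; the required surjectivity says that every section of $\phi^*\sL$ on $\wt D$ can, after subtracting a pulled-back section from $D$, be extended globally to $\sP_{\omega}$. I would attack this on the Quot/flag level: every line bundle on $\sU_{C,\,\omega}$ descends from an ${\rm SL}(V)$-linearised polarisation on $\sR^{ss}_{\omega}$, whose pullback under $\rho:\wt\sR\to\sR$ provides an abundant supply of invariant sections on $\wt{\sR}^{ss}_{\omega}$, while the preimage of $\wt D$ in $\wt{\sR}^{ss}_{\omega}$ is cut out by explicit invariant data coming from the gluing quotient $E_{x_1}\oplus E_{x_2}\twoheadrightarrow Q$. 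Combining these two features should yield the required extension. The cohomological reduction is formal; establishing this surjection onto sections on $\wt D$ is the technical heart of the proof.
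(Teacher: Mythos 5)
The paper offers no proof of this lemma at all: it is quoted verbatim as Lemma 5.5 of \cite{Su1}, so the citation is the entire argument and your attempt has to be judged on its own merits. Your cohomological reduction is the standard and correct first move: the sequence $0\to\sO_{\sU_{C,\,\omega}}\to\phi_*\sO_{\sP_{\omega}}\to\sC\to 0$, the projection formula, finiteness of $\phi$, and the long exact sequence do reduce injectivity of $\phi^*$ on $H^1$ to surjectivity of $\beta:H^0(\sP_{\omega},\phi^*\sL)\to H^0(\sL\otimes\sC)$, and the identification of $\sC$ via the seminormal pushout is legitimate. But note a logical slip: surjectivity of $\beta$ is not \emph{equivalent} to surjectivity of your difference map. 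Since $H^0$ of the quotient sheaf $\phi_*(\phi^*\sL|_{\wt D})/\sL|_D$ is not the quotient of the $H^0$'s, passing from the difference map to $\beta$ also requires injectivity of $H^1(D,\sL|_D)\to H^1(\wt D,\phi^*\sL|_{\wt D})$ --- another instance of the very statement you are trying to prove, now for the conductor loci.

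The decisive problem is that the heart of the lemma --- the surjectivity onto sections over $\wt D$ --- is only asserted (``should yield the required extension''), not proven. All of the moduli-theoretic content lives there: one must know the precise fibre structure of $\wt D\to D$ over the locus of non-locally-free sheaves (how the gluing data at the node are parametrized) and then prove an extension statement of the form $H^0(\sP_{\omega},\phi^*\sL)\twoheadrightarrow H^0(\wt D,\phi^*\sL|_{\wt D})$ modulo pullbacks from $D$, which amounts either to a vanishing such as $H^1(\sP_{\omega},\sI_{\wt D}\otimes\phi^*\sL)=0$ or to an explicit construction of sections. Moreover, the mechanism you propose --- ``an abundant supply of invariant sections'' coming from the linearized polarization on $\wt{\sR}^{ss}_{\omega}$ --- is a positivity argument and could at best bear on ample $\sL$, whereas the lemma is asserted for an \emph{arbitrary} line bundle on $\sU_{C,\,\omega}$; whatever makes the statement true must therefore come from the specific geometry of $\phi$ over the conductor rather than from abundance of sections. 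As written, the proposal is an honest reduction followed by an unproven claim, and so does not establish the lemma.
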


\begin{thm}\label{thm3.3} $H^i(\sP_{\omega},\Theta_{\sP_{\omega}})=0$ ($\forall\,\,i>0$) where $\Theta_{\sP_{\omega}}=\phi^*\Theta_{\sU_{C,\,\omega}}$. In particular, $H^1(\sU_{C,\,\omega}, \Theta_{\sU_{C,\,\omega}})=0.$
\end{thm}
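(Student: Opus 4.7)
The plan is to mimic the proof of Theorem \ref{thm3.1}, now using the flat determinant morphism ${\rm Det}\colon \sP_{\omega} \to J^d_{\wt C}$ from \eqref{2.2} in place of the determinant morphism on $\sU_{C,\omega}$, and feeding in Theorem \ref{thm2.13} which says that each fiber $\sP^L_{\omega}$ is of globally $F$-regular type. First I would observe that $\Theta_{\sP_{\omega}} = \phi^*\Theta_{\sU_{C,\omega}}$ is ample: $\phi$ is the finite normalization morphism, and the pullback of an ample line bundle under a finite morphism is ample.

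Next, consider the tensor-product morphism
$$\tau \colon J^0_{\wt C} \times \sP^L_{\omega} \longrightarrow \sP_{\omega}, \qquad (L_0,(E,q)) \longmapsto (L_0\otimes E,\,q),$$
which is a finite \'etale Galois cover with Galois group $J^0_{\wt C}[r]$ (surjectivity uses that the $r$-th power map on the abelian variety $J^0_{\wt C}$ is surjective). In characteristic zero the trace map realises $\sO_{\sP_{\omega}}$ as a direct summand of $\tau_*\sO$, so
$$H^i(\sP_{\omega},\Theta_{\sP_{\omega}}) \hookrightarrow H^i(J^0_{\wt C}\times \sP^L_{\omega},\,\tau^*\Theta_{\sP_{\omega}}).$$
It therefore suffices to prove $H^i(J^0_{\wt C}\times \sP^L_{\omega},\sL)=0$ for $i>0$ and an arbitrary ample line bundle $\sL$ on the product.

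For this, applying Theorem \ref{thm2.3}(2) to the globally $F$-regular type variety $\sP^L_{\omega}$ gives $H^1(\sP^L_{\omega},\sO_{\sP^L_{\omega}})=0$, which trivialises $\Pic^0(\sP^L_{\omega})$ and forces every morphism from $\sP^L_{\omega}$ to an abelian variety to be constant. The seesaw principle then produces a decomposition $\sL = p_1^*\sL_1\otimes p_2^*\sL_2$ with $\sL_1$, $\sL_2$ ample on their respective factors. K\"unneth reduces the vanishing to two standard facts: ample line bundles on an abelian variety have no higher cohomology, and ample (indeed nef) line bundles on a variety of globally $F$-regular type have no higher cohomology by Theorem \ref{thm2.3}(2). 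The ``in particular'' statement is then immediate from Lemma \ref{lem3.2}, which gives an injection $H^1(\sU_{C,\omega},\Theta_{\sU_{C,\omega}}) \hookrightarrow H^1(\sP_{\omega},\Theta_{\sP_{\omega}})=0$.

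The step I expect to require the most care is the seesaw splitting of $\sL$, since $\sP^L_{\omega}$ is only a normal projective variety with rational singularities rather than smooth; nevertheless, rational singularities together with $h^1(\sO_{\sP^L_{\omega}})=0$ are enough to kill both $\Pic^0$ and the Albanese of $\sP^L_{\omega}$, so the decomposition $\sL=p_1^*\sL_1\otimes p_2^*\sL_2$ still goes through exactly as in the parallel step of the proof of Theorem \ref{thm3.1}.
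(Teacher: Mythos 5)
Your overall strategy is genuinely different from the paper's: the paper does not imitate Theorem \ref{thm3.1} here, but instead runs the Leray spectral sequence for the flat morphism ${\rm Det}:\sP_{\omega}\to J^d_{\wt C}$ of \eqref{2.2}, obtaining $R^i{\rm Det}_*\Theta_{\sP_{\omega}}=0$ from Theorem \ref{thm2.13} and then $H^i(J^d_{\wt C},{\rm Det}_*\Theta_{\sP_{\omega}})=0$ from an explicit decomposition of the direct image ${\rm Det}_*\Theta_{\sP_{\omega}}$ quoted from Remark 4.2 of \cite{Su1} (Lemma 5.2 of \cite{Su3}). Unfortunately your replacement for that second input does not work, because the covering map $\tau$ you use is not defined in the irreducible nodal case.

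The problem: for $L_0\in J^0_{\wt C}$ and a GPS $(E,\,E_{x_1}\oplus E_{x_2}\xrightarrow{q}Q)$, the twist $L_0\otimes E$ carries no canonical generalized parabolic structure. One must produce a quotient of $(L_0\otimes E)_{x_1}\oplus(L_0\otimes E)_{x_2}=(L_0)_{x_1}\otimes E_{x_1}\oplus(L_0)_{x_2}\otimes E_{x_2}$, and this requires an identification of the one-dimensional spaces $(L_0)_{x_1}$ and $(L_0)_{x_2}$; different identifications give non-isomorphic GPS in general, since for stable $E$ the automorphisms are scalars acting diagonally on the two summands. No such identification can be chosen algebraically over all of $J^0_{\wt C}$: the obstruction is the nontrivial line bundle on $J^0_{\wt C}$ corresponding to $\sO_{\wt C}(x_1-x_2)$ under autoduality. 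The group that genuinely acts on $\sP_{\omega}$ is the generalized Jacobian $J^0_C$ of the nodal curve, a $\mathbb{G}_m$-extension of $J^0_{\wt C}$; it is not projective, the resulting map $J^0_C\times\sP^L_{\omega}\to\sP_{\omega}$ is not finite (its fibers contain $\mathbb{G}_m$-orbits), and the trace plus K\"unneth argument collapses. This is exactly why the paper's Theorem \ref{thm3.3} is stated only for the theta line bundle, whereas Theorem \ref{thm3.4} --- the reducible case, where $J^0_C=J^0_{C_1}\times J^0_{C_2}=J^0_{\wt C}$ is an abelian variety and your covering trick does apply verbatim, with $Q\otimes\sN_{x_0}$ well defined --- is stated for every ample line bundle. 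The remaining pieces of your argument (ampleness of $\phi^*\Theta_{\sU_{C,\,\omega}}$, the seesaw splitting on the product, the final appeal to Lemma \ref{lem3.2}) are fine, but the theorem cannot be proved along these lines without replacing $\tau$ by the determinant fibration and supplying the decomposition of ${\rm Det}_*\Theta_{\sP_{\omega}}$.
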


\begin{proof} Let ${\rm Det}:\sP_{\omega}\to J^d_{\wt C}$ be the flat morphism defined in \eqref{2.2}. Then $H^i(\sP_{\omega},\Theta_{\sP_{\omega}})=0$
follows $R^i{\rm Det}_*\Theta_{\sP_{\omega}}=0$ by Theorem \ref{thm2.13} and $H^i(J^d_{\wt C},{\rm Det}_*\Theta_{\sP_{\omega}})=0$ by a decomposition of
${\rm Det}_*\Theta_{\sP_{\omega}}$ (see Remark 4.2 of \cite{Su1} or a more precise version in Lemma 5.2 of \cite{Su3}).
\end{proof}

When $C=C_1\cup C_2$ is a reducible one nodal curve, we have a stronger vanishing theorem on $\sU_{C,\,\omega}$ and $\sP_{\omega}$.

\begin{thm}\label{thm3.4} When $C$ is a reducible one nodal curve with two smooth irreducible components, let $\sP_{\omega}$ be the moduli spaces of semi-stable
GPS on $\wt C$ with parabolic structures determined by $\omega$. Then, for any ample line bundle $\wt\sL$ on $\sP_{\omega}$ and $i>0$, we have $H^i(\sP_{\omega},\wt\sL)=0.$ In particular,
$$H^1(\sU_{C,\,\omega},\, \sL)=0$$
holds for any ample line bundle $\sL$ on $\sU_{C,\,\omega}$.
\end{thm}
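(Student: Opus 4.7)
The plan is to parallel the arguments of Theorem~\ref{thm3.1} and Theorem~\ref{thm3.3}, but now taking Theorem~\ref{thm2.18} (globally $F$-regular type of the fixed-determinant moduli $\sP^L_\omega$ in the reducible case) as the essential input. First I would reduce to a single irreducible component $\sP_{\chi_1,\chi_2}$ of $\sP_\omega$ (available by Theorem~\ref{thm2.17}). Fixing such a component and any $L=(L_1,L_2)\in J^{d_1}_{C_1}\times J^{d_2}_{C_2}$ in its determinant image, I would form the tensoring morphism
\[
\pi\colon J^0_{\wt C}\times\sP^L_\omega\longrightarrow\sP_{\chi_1,\chi_2},\qquad (M,E)\longmapsto E\otimes M,
\]
and verify that $\pi$ is finite and surjective. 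Surjectivity follows from surjectivity of multiplication by $r$ on the abelian variety $J^0_{\wt C}=J^0_{C_1}\times J^0_{C_2}$, and the fibers are torsors under the $r$-torsion subgroup, giving degree $r^{2g}$ with $g=g_1+g_2$. Also, tensoring by $M\in J^0_{\wt C}$ preserves the Euler characteristics $\chi_1,\chi_2$, so the image lies in the chosen component.

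The next step is to show $H^i(\sP_{\chi_1,\chi_2},\wt\sL)=0$ for any ample $\wt\sL$ and $i>0$. Since $\pi$ is finite surjective in characteristic zero, the normalized trace splits $\sO_{\sP_{\chi_1,\chi_2}}\hookrightarrow\pi_*\sO$, so $\pi^*$ is injective on all cohomology. By Theorem~\ref{thm2.18} and Theorem~\ref{thm2.3}(2), $H^1(\sP^L_\omega,\sO)=0$, which forces $\Pic^0(\sP^L_\omega)=0$; the see-saw principle then yields $\Pic(J^0_{\wt C}\times\sP^L_\omega)=\Pic(J^0_{\wt C})\oplus\Pic(\sP^L_\omega)$. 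Hence $\pi^*\wt\sL=\sL_1\boxtimes\sL_2$, and ampleness of $\pi^*\wt\sL$ (pullback by a finite surjective map preserves ampleness) forces both $\sL_i$ to be ample. The Künneth formula then gives
\[
H^i(J^0_{\wt C}\times\sP^L_\omega,\sL_1\boxtimes\sL_2)=\bigoplus_{p+q=i}H^p(J^0_{\wt C},\sL_1)\otimes H^q(\sP^L_\omega,\sL_2),
\]
which vanishes for $i>0$ by Mumford's vanishing on the abelian variety $J^0_{\wt C}$ and by Theorem~\ref{thm2.3}(2) applied to the globally $F$-regular type variety $\sP^L_\omega$. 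Injectivity of $\pi^*$ completes the vanishing on $\sP_{\chi_1,\chi_2}$.

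Finally, for the statement on $\sU_{C,\omega}$, I would argue that the normalization $\phi\colon\sP_\omega\to\sU_{C,\omega}$ is finite and surjective, so $\phi^*\sL$ is ample whenever $\sL$ is, and then invoke the analogue of Lemma~\ref{lem3.2} in the reducible case (its proof only uses that $\phi$ is the normalization of a seminormal variety, so that the cokernel of $\sO_{\sU_{C,\omega}}\to\phi_*\sO_{\sP_\omega}$ is supported on a proper closed subscheme). This yields $H^1(\sU_{C,\omega},\sL)\hookrightarrow H^1(\sP_\omega,\phi^*\sL)=0$. I expect the main technical obstacle to lie in the Picard decomposition step—without the input of Theorem~\ref{thm2.18}, which supplies $H^1(\sP^L_\omega,\sO)=0$, the see-saw argument would fail and $\pi^*\wt\sL$ could mix the abelian and moduli factors, breaking the Künneth vanishing.
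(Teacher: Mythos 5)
Your proposal is correct and follows essentially the same route as the paper: reduce to $\sP_\omega$ via (the reducible analogue of) Lemma~\ref{lem3.2}, split $\sP_\omega$ into its components, pass to the finite cover $J^0_{\wt C}\times\sP^L_\omega\to\sP_{\chi_1,\chi_2}$, and conclude by the see--saw/K\"unneth argument of Theorem~\ref{thm3.1} together with Theorem~\ref{thm2.18} and Theorem~\ref{thm2.3}(2). The only cosmetic difference is that you verify finiteness of the tensoring map directly via the $r$-torsion torsor structure, whereas the paper cites Lemma 6.6 of \cite{Su3} for an ample line bundle pulling back to an ample one.
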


\begin{proof} By Lemma \ref{lem3.2}, it is enough to show $H^i(\sP_{\omega},\wt\sL)=0$ for any ample line bundle $\wt\sL$ and $i>0$.

When $C=C_1\cup C_2$, the moduli space
$\sP_{\omega}$ is a disjoint union of
$$\{\sP_{d_1,d_2}\}_{d_1+d_2=d}$$ where $\sP_{d_1,d_2}$ consists of GPS $(E,Q)$ with $d_i=deg(E|_{C_i})$. It is enough to consider
$\sP_{\omega}=\sP_{d_1,d_2}$, thus we have the flat morphism
$${\rm Det}:\sP_{\omega}\to J^d_{\wt C}=J^{d_1}_{C_1}\times
J^{d_2}_{C_2}=J^d_C$$ and $J^0_{\wt C}=J^0_{C_1}\times
J^0_{C_2}=J^0_C$ acts on $\sP_{\omega}$ by
$$((E,Q),\sN)\mapsto (E\otimes\pi^*\sN, Q\otimes \sN_{x_0})$$
where $\pi:\wt C\to C$ is the normalization of $C$.
Let $\sP_{\omega}^L={\rm Det}^{-1}(L)$ and consider the morphism
$f:\sP_{\omega}^L\times J^0_C\to \sP_{\omega}$, which is a finite morphism (see the proof of Lemma 6.6 in \cite{Su3} where we figure out a line bundle
$\Theta$ on $\sP_{\omega}$ such that its pullback $f^*(\Theta)$ is ample). Thus it is enough to prove the vanishing theorem on $\sP_{\omega}^L\times J^0_C$,
which follows the same arguments in the proof of Theorem \ref{thm3.1} by using Theorem \ref{thm2.18}.
\end{proof}

\begin{nota}\label{nota3.5} For $\mu=(\mu_1,\cdots,\mu_r)$  with $0\le\mu_r\le\cdots\le\mu_1<
k,$ let $$\{d_i=\mu_{r_i}-\mu_{r_i+1}\}_{1\le i\le l}$$ be the
subset of nonzero integers in
$\{\mu_i-\mu_{i+1}\}_{i=1,\cdots,r-1}.$ We define
$$ r_i(x_1)=r_i,\quad d_i(x_1)=d_i,\quad l_{x_1}=l,$$
$$ r_i(x_2)=r-r_{l-i+1},\quad d_i(x_2)=d_{l-i+1},\quad l_{x_2}=l,$$ and for $j=1,2$, we set
$$\aligned
\vec a(x_j)&=\left(\mu_r,\mu_r+d_1(x_j),\cdots,\mu_r+
\sum^{l_{x_j}-1}_{i=1}d_i(x_j),\mu_r+\sum^{l_{x_j}}_{i=1}d_i(x_j)\right)\\
\vec n(x_j)&=(r_1(x_j),r_2(x_j)-r_1(x_j),
\cdots,r_{l_{x_j}}(x_j)-r_{l_{x_j}-1}(x_j),r-r_{l_{x_j}}(x_j)).\endaligned$$
\end{nota}

\begin{thm}\label{thm3.6} For any $\omega=(k,\,\{\vec n(x),\vec a(x)\}_{x\in I})$ such that
$$\text{$\ell:=\frac{k\chi-\sum_{x\in
I}\sum^{l_x}_{i=1}d_i(x)r_i(x)}{r}$ is an integer}$$
where $\chi=d+r(1-g)$, let $D_g(r,d,\omega)=dim H^0(\sU_{C,\,\omega},\Theta_{\sU_{C,\,\omega}})$. Then, for any positive integers $c_1$, $c_2$ and partitions $I=I_1\cup I_2$, $g=g_1+g_2$
such that $\ell_j=\frac{c_j\ell}{c_1+c_2}$ ($j=,\,2$) are integers, we have
\ga{3.3}{D_g(r,d,\omega)=\sum_{\mu}D_{g-1}(r,d,\omega^{\mu})}
\ga{3.4}{D_g(r,d,\omega)=\sum_{\mu}D_{g_1}(r,d_1^{\mu},\omega_1^{\mu})\cdot D_{g_2}(r,d_2^{\mu},\omega_2^{\mu})}
where $\mu=(\mu_1,\cdots,\mu_r)$ runs through $0\le\mu_r\le\cdots\le
\mu_1< k$ and $$\omega^{\mu}=(k, \{\vec n(x),\,\vec a(x)\}_{x\in I\cup\{x_1,\,x_2\}}),\,\,\omega_j^{\mu}=(k, \{\vec n(x),\,\vec a(x)\}_{x\in I_j\cup\{x_j\}})$$
with $\vec n(x_j)$, $\vec a(x_j)$ ($j=1,\,2$) determined by $\mu$ (Notation \ref{nota3.5}) and
$$d_1^{\mu}=n^{\omega}_1+\frac{1}{k}\sum^r_{i=1}
\mu_i+r(g_1-1),\quad d^{\mu}_2=n^{\omega}_2+r-\frac{1}{k}\sum^r_{i=1}
\mu_i+r(g_2-1)$$
$$n^{\omega}_j=\frac{1}{k}\left(r\frac{c_j}{c_1+c_2}\ell+\sum_{x\in
I_j}\sum^{l_x}_{i=1}d_i(x)r_i(x)\right)\,\,\,(j=1,\,\,2).$$
\end{thm}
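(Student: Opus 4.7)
The strategy is the familiar ``degenerate to a nodal curve and factor at the node'', made rigorous by the vanishing theorems just established. For \eqref{3.3} I would fix a one-parameter family $\pi:\sC\to\Delta$ whose general fibre $\sC_t$ ($t\ne 0$) is smooth of genus $g$ and whose central fibre $\sC_0$ is irreducible with exactly one node, so that its normalisation $\wt\sC_0$ is smooth of genus $g-1$. Carrying the parabolic data along, one obtains a proper family $\sU_\omega\to\Delta$ of moduli spaces together with a relative theta line bundle $\Theta$ whose fibre at $t$ is $\Theta_{\sU_{\sC_t,\omega_t}}$. For \eqref{3.4} the set-up is identical except that $\sC_0=C_1\cup C_2$ is reducible with $C_j$ smooth of genus $g_j$ meeting transversely at the unique node and $I$ distributed as $I=I_1\cup I_2$; the polarisation is chosen so that $\deg(\sO(1)|_{C_j})=c_j$ and both $\ell_j=c_j\ell/(c_1+c_2)$ are integers.

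The first key point is the constancy of $\dim H^0(\sU_{\sC_t,\omega_t},\Theta_t)$ in $t\in\Delta$. For $t\ne 0$ this is Corollary 4.8 of \cite{Su3}. To include $t=0$ I would combine this with the vanishing
\[H^i(\sU_{\sC_t,\omega_t},\Theta_t)=0\quad\text{for all }i>0,\ t\in\Delta,\]
where the case $t\ne 0$ is Theorem \ref{thm3.1}, the case $t=0$ in the irreducible setting is Theorem \ref{thm3.3}, and in the reducible setting is Theorem \ref{thm3.4}. Together with upper semicontinuity and cohomology-and-base-change, this forces $\pi_*\Theta$ to be locally free on $\Delta$ with fibres $H^0(\sU_{\sC_t,\omega_t},\Theta_t)$, and in particular
\[D_g(r,d,\omega)=\dim H^0(\sU_{\sC_0,\omega_0},\Theta_{\sU_{\sC_0,\omega_0}}).\]

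With the dimension localised on the nodal fibre, I would invoke the existing factorisation theorems. For the irreducible nodal case, the factorisation theorem of \cite{Su1} (which extends \cite{NR} to all $r\ge 2$) gives
\[H^0(\sU_{\sC_0,\omega_0},\Theta_{\sU_{\sC_0,\omega_0}})=\bigoplus_{\mu}H^0\bigl(\sU_{\wt\sC_0,\omega^{\mu}},\Theta_{\sU_{\wt\sC_0,\omega^{\mu}}}\bigr),\]
where $\mu=(\mu_1,\ldots,\mu_r)$ ranges over $0\le\mu_r\le\cdots\le\mu_1<k$ and the parabolic structures at the two points $x_1,x_2\in\wt\sC_0$ lying above the node are exactly those of Notation \ref{nota3.5}; taking dimensions yields \eqref{3.3}. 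For the reducible case, the factorisation theorem of \cite{Su2} decomposes
\[H^0(\sU_{\sC_0,\omega_0},\Theta)=\bigoplus_{\mu}H^0(\sU_{C_1,\omega_1^{\mu}},\Theta)\otimes H^0(\sU_{C_2,\omega_2^{\mu}},\Theta),\]
and reading off the component-wise Euler characteristics of the two summands produces the formulae for $d_1^\mu, d_2^\mu$ stated in the theorem. Terms with $d_j^\mu\notin\Z$ correspond to empty moduli spaces and contribute zero, so the sum may be written over all $\mu$ as in the statement.

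The main technical obstacle is the passage through $t=0$. Because $\sU_{\sC_0,\omega_0}$ is only seminormal (and in the reducible case is a disjoint union of up to $r+1$ irreducible components), the vanishing $H^1=0$ on the central fibre is not a formal consequence of the smooth case, and it is precisely this vanishing that prevented earlier approaches from reaching $r>2$. I would reduce it via Lemma \ref{lem3.2} to the corresponding statement on the normalisation $\sP_{\omega_0}$, where the determinant morphism $\sP_{\omega_0}\to J^d_{\wt\sC_0}$ has globally $F$-regular type fibres by Theorems \ref{thm2.13} and \ref{thm2.18}. The fibrewise vanishing from Theorem \ref{thm2.3}(2) then pushes down, via the direct image along the determinant map, to the desired global vanishing; this is exactly the content of Theorems \ref{thm3.3} and \ref{thm3.4}. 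Once this uniform vanishing is in hand, semicontinuity together with constancy of $\dim H^0$ on $\Delta^*$ extend local-freeness of $\pi_*\Theta$ across $t=0$, and the factorisation at the node concludes the proof.
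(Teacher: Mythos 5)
Your proposal follows essentially the same route as the paper: degenerate $C$ in a family to a one-nodal curve (irreducible for \eqref{3.3}, reducible with the prescribed $c_1,c_2$ for \eqref{3.4}), use Theorems \ref{thm3.1}, \ref{thm3.3} and \ref{thm3.4} to keep $\dim H^0(\sU_{\sC_t,\omega_t},\Theta_t)$ constant across $t=0$, and then read off the recurrence from the factorization theorems of \cite{Su1} and \cite{Su2}. The extra detail you supply on semicontinuity, base change, and the reduction of the central-fibre vanishing to $\sP_{\omega}$ via Lemma \ref{lem3.2} is consistent with the paper's argument and with how it treats the non-integral $d_j^{\mu}$ terms later.
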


\begin{proof} Consider a flat family of projective $|I|$-pointed curves $\sX\to T$ and a relative ample line bundle $\sO_{\sX}(1)$ of relative degree $c$ such that a fiber $\sX_{t_0}:=X$ ($t_0\in T$) is a connected curve with only one node $x_0\in X$ and $\sX_t$ ($t\in T\setminus\{t_0\}$) are smooth curves with a fiber $\sX_{t_1}=C$ ($t_1\neq t_0$). Then one can associate a family of moduli spaces $\sM\to T$ and a line bundle
$\Theta$ on $\sM$ such that each fiber $\sM_t=\sU_{\sX_t,\,\omega}$ is the moduli space of semi-stable parabolic sheaves on $\sX_t$ and $\Theta|_{\sM_t}=\Theta_{\sU_{\sX_t},\,\omega}$. By degenerating $C$ to an irreducible $X$ and using Theorem \ref{thm3.1} and Theorem \ref{thm3.3}, the recurrence relation \eqref{3.3} is nothing but the Factorization theorem of \cite{Su1}.
If we degenerate $C$ to a reducible curve $X=X_1\cup X_2$ with $g(X_i)=g_i$ and choose the relative ample line bundle $\sO_{\sX}(1)$ such that $c_i={\rm deg}(\sO_{\sX}(1)|_{X_i}$, by using
Theorem \ref{thm3.1} and Theorem \ref{thm3.4}, the recurrence relation \eqref{3.4} is exactly the Factorization theorem of \cite{Su2}.
\end{proof}

In the recurrence relation \eqref{3.4}, the degree $d_1^{\mu}$ varies with $\mu$ and $D_{g_1}(r,d_1^{\mu},\omega_1^{\mu})$ makes sense only when $d_1^{\mu}$ is an integer, which are not
convenient for applications. To remedy it, we are going to study the behavior of $D_g(r,d,\omega)$ under Hecke transformation.

Given a parabolic sheaf $E$ with quasi-parabolic structure
$$E_z=Q_{l_z+1}(E)_z\twoheadrightarrow Q_{l_z}(E)_z\twoheadrightarrow \cdots \cdots \twoheadrightarrow Q_1(E)_z\twoheadrightarrow Q_0(E)_z=0$$
of type $\vec n(z)=(n_1(z),...,n_{l_z+1}(z))$ at $z\in I$ and weights $$ 0= a_1(z) <a_2(z)< \cdots <a_{l_z+1}(z)<k.$$
Let $F_i(E)_z=ker\{E_z\twoheadrightarrow Q_i(E)_z\}$ and  $E'=ker\{E\twoheadrightarrow Q_1(E)_z\}$.
Then, at $z\in I$, $E'$ has a natural quasi-parabolic structure
\ga{4.5} {E'_z\twoheadrightarrow F_1(E)_z\twoheadrightarrow Q_{l_z-1}(E')_z \twoheadrightarrow\cdots\twoheadrightarrow Q_1(E')_z\twoheadrightarrow 0}
of type $\vec n'(z)=(n'_1(z),...,n'_{l_z+1}(z))=(n_2(z),...,n_{l_z+1}(z),n_1(z))$, where
$$Q_i(E')_z\subset Q_{i+1}(E)_z$$ is the image of
$F_1(E)_z$ under $E_z\twoheadrightarrow Q_{i+1}(E)_z$. It is easy to see
$$Q_i(E')_z\cong F_1(E)_z/F_{i+1}(E)_z.$$

\begin{defn}\label{defn3.7} The parabolic sheaf $E'$ with given weight
$$0= a'_1(z)<\cdots < a'_{l_z+1}(z)< k$$
is called Hecke transformation of the parabolic sheaf $E$ at $z\in I$, where $a'_{l_z+1}(z)=k-a_2(z)$
and $a'_i(z)=a_{i+1}(z)-a_2(z)$ for $2\leq i\leq l_z$.
\end{defn}

\begin{lem}\label{lem3.8}  The parabolic bundle $E'$ is semistable (resp., stable) iff $E$ is semistable (resp., stable).
\end{lem}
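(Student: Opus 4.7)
The plan is to prove both parts (stability and semistability) simultaneously by exhibiting a natural bijection between ``test'' subsheaves of $E$ and those of $E'$ under which the parabolic (semi)stability inequality is preserved. Explicitly, for any $F\subset E$ with $E/F$ torsion-free, I would set $F':=F\cap E'=\ker(F\to Q_1(E)_z^F)$; this will be the test subsheaf of $E'$ paired with $F$.

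First I would compute ${\rm par}\chi(E')$. Since $E\to Q_1(E)_z$ is a surjection onto a length-$n_1(z)$ skyscraper, $\chi(E')=\chi(E)-n_1(z)$. Reading off the flag of $E'_z$ in Definition \ref{defn3.7}, one verifies the cyclic shift
$$\vec n'(z)=(n_2(z),n_3(z),\ldots,n_{l_z+1}(z),\,n_1(z)).$$
Combined with the weights $a'_i(z)=a_{i+1}(z)-a_2(z)$ for $1\le i\le l_z$ and $a'_{l_z+1}(z)=k-a_2(z)$, a direct manipulation of the parabolic-weight sum at $z$ yields
$${\rm par}\chi(E')={\rm par}\chi(E)-\frac{a_2(z)\,r}{k}.$$

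Next, I would apply the same computation to $F$ with its induced parabolic structure. Using the identifications $Q_i(E')_z\cong \ker(Q_{i+1}(E)_z\to Q_1(E)_z)$ for $1\le i\le l_z-1$ and $Q_{l_z}(E')_z\cong F_1(E)_z$, a short check shows that the induced subflag of $F'$ in $E'$ has dimensions
$$n'^{F'}_i(z)=n^F_{i+1}(z)\ (1\le i\le l_z),\qquad n'^{F'}_{l_z+1}(z)=n^F_1(z),$$
i.e., the parabolic structure on $F'$ is exactly the Hecke transform of that on $F$. The identical algebraic manipulation then gives ${\rm par}\chi(F')={\rm par}\chi(F)-a_2(z)\,r(F)/k$, and $r(F')=r(F)$ since $F$ and $F'$ differ only by a torsion sheaf at $z$. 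Combining with the formula for ${\rm par}\chi(E')$,
$${\rm par}\chi(F')-\frac{{\rm par}\chi(E')}{r}\,r(F')={\rm par}\chi(F)-\frac{{\rm par}\chi(E)}{r}\,r(F),$$
so the (semi)stability inequality for $F\subset E$ is equivalent to that for $F'\subset E'$.

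Finally I would confirm that $F\mapsto F'$ gives a bijection between saturated subsheaves on each side. Given saturated $F\subset E$, the exact sequence $0\to F/F'\to E/F'\to E/F\to 0$ shows the torsion of $E/F'$ equals $F/F'$ (supported at $z$), so $E'/F'\hookrightarrow E/F'$ meets this torsion in $(F\cap E')/F'=0$ and is torsion-free. Conversely, for $G'\subset E'$ with $E'/G'$ torsion-free, letting $G$ be the saturation of $G'$ in $E$, the inclusion $(G\cap E')/G'\hookrightarrow E'/G'$ of a torsion subsheaf into a torsion-free one vanishes, so $G\cap E'=G'$. The main technical obstacle is the cyclic shift pattern above: one must carefully track how the subflag of $F'$ in $E'$ relates to that of $F$ in $E$, which boils down to identifying the subspaces $Q_i(E')^{F'}_z$ inside $Q_{i+1}(E)^F_z$. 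Once this is verified, combining the bijection with the preserved inequality yields the lemma.
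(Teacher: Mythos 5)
Your proof is correct and follows essentially the same route as the paper: you set up the same correspondence $F\mapsto F'=\ker(F\to Q_1(E)_z^F)$, verify the same cyclic shift of the flag data $n'^{F'}_i(z)=n^F_{i+1}(z)$, and derive the same identities ${\rm par}\chi(E')={\rm par}\chi(E)-\tfrac{r}{k}a_2(z)$ and ${\rm par}\chi(F')={\rm par}\chi(F)-\tfrac{r(F)}{k}a_2(z)$, so that slope differences are preserved. Your explicit check that $F\mapsto F'$ is a bijection on saturated test subsheaves is a small addition that the paper leaves implicit, but it does not change the argument.
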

\begin{proof} $E'$ is defined by the exact sequence of sheaves
$$0\to E'\xrightarrow{i} E\xrightarrow{\delta} Q_1(E)_z\to 0$$
such that $E_z\xrightarrow{\delta_z} Q_1(E)_z$ is the surjective homomorphism
$$E_z=Q_{l_z+1}(E)_z\twoheadrightarrow Q_{l_z}(E)_z\twoheadrightarrow \cdots \cdots \twoheadrightarrow Q_1(E)_z.$$
For any sub-bundle $\sF\subset E$ of rank $r_1$, let $Q_i(E)^{\sF}_z\subset Q_i(E)_z$ be the image of $\sF_z\subset E_z$ under
$E_z\twoheadrightarrow Q_{l_z}(E)_z\twoheadrightarrow \cdots \cdots \twoheadrightarrow Q_i(E)_z$, and the sub-bundle $\sF'\subset E'$ is defined by exact sequence
of sheaves:
$$0\to \sF'\xrightarrow{i} \sF\xrightarrow{\delta} Q_1(E)^{\sF}_z\to 0.$$
Let $Q_{l_z}(E')_z^{\sF'}\subset F_1(E)_z$ and $Q_i(E')_z^{\sF'}\subset Q_i(E')_z$ ($1\le i<l_z$) be the image of $\sF'_z\subset E'_z$
under $E'_z\twoheadrightarrow F_1(E)_z$ and $E'_z\twoheadrightarrow Q_i(E')_z$, which are the surjections in \eqref{4.5}.
Since $Q_{l_z}(E')_z^{\sF'}=ker\{\sF_z\xrightarrow{\delta_z} Q_1(E)_z^{\sF}\}$,
$$ker\{Q_{i}(E')_z^{\sF'}\twoheadrightarrow Q_{i-1}(E')_z^{\sF'}\}=ker\{Q_{i+1}(E)_z^{\sF}\twoheadrightarrow Q_{i}(E)_z^{\sF}\}$$
for $1\le i\le l_z$. In particular, $n_i^{\sF'}(z)=n^{\sF}_{i+1}(z)$,
$n^{\sF'}_{l_z+1}(z)=n^{\sF}_1(z)$,
$${\rm p}ar_{\omega'}\chi(\sF')={\rm p}ar_{\omega}\chi(\sF)-\frac{r_1}{k}a_2(z), \quad {\rm p}ar_{\omega'}\chi(E')={\rm p}ar_{\omega}\chi(E)-\frac{r}{k}a_2(z).$$
Thus ${\rm p}ar_{\omega'}\mu(\sF')-{\rm p}ar_{\omega'}\mu(E')={\rm p}ar_{\omega}\mu(\sF)-{\rm p}ar_{\omega}\mu(E)$, which proves the lemma.
\end{proof}

\begin{lem}\label{lem3.9} For parabolic data $\omega=(k,\{\vec n(x),\vec a(x)\}_{x\in I})$, let
\ga{3.6} {\omega'=(k, \{\vec n(x), \vec a(x)\}_{x\neq z \in I}\cup \{\vec a'(z), \vec n'(z)\})}
where $\vec n'(z)=(n'_1(z),...,n'_{l_z+1}(z))=(n_2(z),...,n_{l_z+1}(z),n_1(z))$,
$$\vec a'(z)=(0, a_2'(z), ..., a'_{l_z+1}(z)),\quad a'_{l_z+1}(z)=k-a_2(z)+a_1(z)$$
and $a'_i(z)=a_{i+1}(z)-a_2(z)+a_1(z)$ for $2\le i\le l_z$. Then
$$D_g(r,d,\omega)=D_g(r,d-n_1(z),\omega').$$
\end{lem}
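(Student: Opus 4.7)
The plan is to realize the Hecke transformation $E \mapsto E'$ as an isomorphism of moduli spaces
$$\phi\colon \sU_{C,\omega}(r,d) \xrightarrow{\sim} \sU_{C,\omega'}(r, d-n_1(z))$$
and check that $\phi^*\Theta_{\sU_{C,\omega'}} \cong \Theta_{\sU_{C,\omega}}$, from which $D_g(r,d,\omega) = D_g(r,d-n_1(z),\omega')$ follows by taking global sections.

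First, I would promote the Hecke construction to a map of moduli functors. Given an $S$-flat family $\sE$ of $\omega$-semistable parabolic sheaves over $C \times S$, defining $\sE' = \ker\{\sE \twoheadrightarrow \sQ_1(\sE)_{\{z\}\times S}\}$ yields an $S$-flat family of $\omega'$-semistable parabolic sheaves (semistability preserved by Lemma \ref{lem3.8}). This gives the morphism $\phi$. The inverse is constructed similarly: from an $\omega'$-parabolic family $\sE'$, the top kernel of its flag at $z$ (which has dimension $n'_{l_z+1}(z) = n_1(z)$) determines a length-$n_1(z)$ subspace $V \subset \sE'_z$, and one recovers $\sE$ as the preimage of $V$ under the natural surjection $\sE'(z) \twoheadrightarrow \sE'(z)/\sE' \cong \sE'_z$. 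This constructs $\phi^{-1}$, making $\phi$ an isomorphism.

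Next, I would verify $\phi^*\Theta_{\sU_{C,\omega'}} \cong \Theta_{\sU_{C,\omega}}$. From the universal Hecke sequence $0 \to \sE' \to \sE \to \sQ_1(\sE)_z \to 0$ one has
$${\rm det}\,R\pi_*\sE \cong {\rm det}\,R\pi_*\sE' \otimes {\rm det}(\sQ_1(\sE)_z),$$
so $({\rm det}\,R\pi_*\sE)^{-k}$ differs from $({\rm det}\,R\pi_*\sE')^{-k}$ by $({\rm det}(\sQ_1(\sE)_z))^{-k}$. At parabolic points $x \ne z$ the flags and weights are unchanged, while at $z$ the combinatorics $n'_i(z)=n_{i+1}(z)$, $d'_i(z) = d_{i+1}(z)$ for $i < l_z$, $d'_{l_z}(z) = k - a_{l_z+1}(z)$, together with $r'_i(z) = r_{i+1}(z) - n_1(z)$, produce the precise cancellation needed to absorb the $({\rm det}(\sQ_1(\sE)_z))^{-k}$ correction when one rewrites the flag-determinant factors for $\omega'$ in terms of those for $\omega$. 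Finally, a direct computation using the definition of $\ell$ yields $\ell' = \ell + a_{l_z+1}(z) - k$, and the $({\rm det}\,\sE_y)^\ell$ and $({\rm det}\,\sE'_y)^{\ell'}$ factors absorb the remaining contributions.

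The main obstacle is this last bookkeeping: one must carefully track how the cyclic reshuffling of the flag at $z$, the accompanying shift of weights $a_i(z) \to a'_i(z)$, the change in $\chi$ (hence in $\ell$), and the determinant-of-cohomology correction from the Hecke exact sequence combine exactly so that the two $\mathrm{SL}(V)$-equivariant polarizations descend to the same ample line bundle on the moduli space. Once this is established, the equality $\dim H^0(\sU_{C,\omega},\Theta_{\sU_{C,\omega}}) = \dim H^0(\sU_{C,\omega'},\Theta_{\sU_{C,\omega'}})$ is immediate, proving the lemma.
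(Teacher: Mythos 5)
Your proposal is correct and follows essentially the same route as the paper: the paper's proof also realizes the Hecke transformation as an isomorphism $\mathrm{H}_z\colon \sU_{C,\omega}\to\sU_{C,\omega'}$ of moduli spaces (constructing the inverse via the elementary modification determined by the top kernel $F_{l_z}(E')_z$ of the flag at $z$, exactly as you do) with $\mathrm{H}_z^{\ast}\Theta_{\sU_{C,\omega'}}=\Theta_{\sU_{C,\omega}}$. If anything, your sketch of the polarization bookkeeping ($d'_i(z)=d_{i+1}(z)$, $r'_i(z)=r_{i+1}(z)-n_1(z)$, $\ell'=\ell+a_{l_z+1}(z)-k$) supplies detail that the paper merely asserts; the only small omission on your side is spelling out how the full flag on $E$ at $z$ (not just the underlying sheaf) is recovered from that of $E'$.
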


\begin{proof} One can also define the Hecke transformation of a family of parabolic sheaves (flat family yielding flat family, and preserve semistability). Thus, for $z\in I$, we have a morphism $$\mathrm{H}_z: \sU_{C,\,\omega}=\sU_C(r, d,\omega)\rightarrow \sU_C(r, d-n_1(z), \omega')=\sU_{C,\,\omega'}$$ such that $\mathrm{H}_z^{\ast}\Theta_{\sU_{C,\,\omega'}}=\Theta_{\sU_{C,\,\omega}}$.
In fact, $\mathrm{H}_z$ is an isomorphism. For any parabolic bundle $E'$ with quasi-parabolic structure of type $\vec n'(z)$, let
$$F_{i}(E')_z=ker\{E'_z\twoheadrightarrow Q_i(E')_z\}\quad (1\le i\le l_z).$$
Then there exists a bundle $E$ and a homomorphism $E'\xrightarrow{i} E$ such that $F_{l_z}(E')_z=ker\{E'_z\xrightarrow{i_z}E\}.$
Let $F_1(E)_z=i_z(E'_z)\subset E_z$ and
$$F_{i+1}(E)_z=i_z(F_i(E')_z).$$
Then the quasi-parabolic structure of $E$ at $z\in I$ given by
$$0=F_{l_z+1}(E)_z\subset F_{l_z}(E)_z\subset F_{l_z-1}(E)_z \subset \cdots\subset F_1(E)_z\subset E_z$$
has of type $\vec n(z)=(n_1(z),...,n_{l_z+1}(z))$ and the weights $\vec a(z)$ are determined by
$\vec a'(z)$ (let $a_1(z)=0$, $a_2(z)=k-a'_{l_z+1}(z)$ and $a_{i+1}(z)=a'_i(z)+k-a'_{l_z+1}(z)$ for $2\le i\le l_z$).
The construction can be applied to a family of parabolic sheaves, which induces $H_z^{-1}$.
\end{proof}

\begin{lem}\label{lem3.10} For $\omega=(k,\{\vec n(x),\vec a(x)\}_{x\in I})$, if $n_1(z)>1$, let
\ga{3.7} {\omega''=(k, \{\vec n(x), \vec a(x)\}_{x\neq z \in I}\cup \{\vec a''(z), \vec n''(z)\})}
where $\vec a''(z)=(0,a_2(z),\cdots, a_{l_z+1}(z), k)$ (we assume $a_1(z)=0$) and
$$\vec n''(z)=(n_1(z)-m,n_2(z),\cdots, n_{l_z+1}(z), m), \quad 1<m<n_1(z).$$
Then $D_g(r, d-n_1(z),\omega')=D_g(r,d-m,\omega'').$
\end{lem}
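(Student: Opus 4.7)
The plan is to apply Lemma \ref{lem3.9} to the parabolic data $\omega''$ at $z$ and then identify the Hecke-transformed data with the $\omega'$ of Lemma \ref{lem3.9}. Since the first weight of $\omega''$ at $z$ equals $0$ and the first type equals $n_1(z)-m\geq 1$, Lemma \ref{lem3.9} applies directly to $\omega''$ and yields
$$D_g(r,d-m,\omega'')=D_g\bigl(r,(d-m)-(n_1(z)-m),(\omega'')'\bigr)=D_g(r,d-n_1(z),(\omega'')'),$$
where $(\omega'')'$ denotes the Hecke transform of $\omega''$. It therefore suffices to show that $D_g(r,d-n_1(z),(\omega'')')=D_g(r,d-n_1(z),\omega')$.

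Following the recipe of Lemma \ref{lem3.9} (cyclic shift of types, shift of weights by $-a_2(z)$ with $k-a_2(z)$ appended) applied to $\omega''$, the types of $(\omega'')'$ at $z$ become $(n_2(z),\ldots,n_{l_z+1}(z),m,n_1(z)-m)$ and its weights become
$$\bigl(0,\ a_3(z)-a_2(z),\ \ldots,\ a_{l_z+1}(z)-a_2(z),\ k-a_2(z),\ k-a_2(z)\bigr).$$
The last two weights coincide. Merging the two adjacent equal-weight flag pieces of sizes $m$ and $n_1(z)-m$ into a single piece of size $n_1(z)$ at weight $k-a_2(z)$ produces exactly the data $\omega'$. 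On the Quot-scheme side, forgetting this refinement is a relative Grassmannian bundle with fiber $\mathrm{Gr}(m,n_1(z))$. Because the weight gap $d_i=a_{i+1}-a_i$ at the equal-weight step is $0$, the corresponding $\det(\sQ_i)^{d_i}$ factor in $\Theta_{\sR,(\omega'')'}$ is trivial, so $\Theta_{\sR,(\omega'')'}$ is the pullback of $\Theta_{\sR,\omega'}$ under this Grassmannian fibration. Passing to the GIT quotients and invoking the projection formula together with $H^i(\mathrm{Gr}(m,n_1(z)),\sO)=0$ for $i>0$ yields the desired equality of $D_g$.

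The main obstacle is justifying the identification $(\omega'')'\leftrightarrow \omega'$, which falls outside the strict-inequality convention of Definition \ref{defn2.4}. One must verify that the flag-refinement map on Quot schemes descends to a Grassmannian fibration between the GIT quotients, that semistability is preserved under this forgetful map (both directions), and that the vanishing of the relevant weight jump really yields a pullback theta bundle rather than a twist. Once these routine but delicate verifications are in place, the lemma follows immediately by combining Lemma \ref{lem3.9} with the projection formula on the Grassmannian bundle.
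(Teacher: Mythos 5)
Your strategy --- Hecke-transform $\omega''$ at $z$ via Lemma \ref{lem3.9} and then coarsen the resulting flag --- is close in spirit to the paper's proof, and your computation of $(\omega'')'$ (types $(n_2(z),\ldots,n_{l_z+1}(z),m,n_1(z)-m)$, weights ending in two copies of $k-a_2(z)$) is correct. The paper, however, does not factor through an intermediate datum: it directly defines $E'=\ker\{E\twoheadrightarrow Q_1(E)_z\}$ for $E$ semistable with respect to $\omega''$, observes that $E'$ inherits a flag of type $\vec n'(z)=(n_2(z),\ldots,n_{l_z+1}(z),n_1(z))$ (the top piece of dimension $n_1(z)$ automatically absorbs what in your picture are the two equal-weight steps of sizes $m$ and $n_1(z)-m$), checks that semistability is equivalent on both sides, and then proves that the induced morphism $\varphi:\sU_{C,\omega''}\to\sU_{C,\omega'}$ satisfies $\varphi^*\Theta_{\sU_{C,\omega'}}=\Theta_{\sU_{C,\omega''}}$ (hence is finite), is surjective, and is injective because the ambiguity in recovering $E$ from $E'$ produces only $s$-equivalent parabolic sheaves; normality then forces $\varphi$ to be an isomorphism.

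The gap in your proposal is precisely the step you defer as ``routine but delicate,'' and it is not routine in the form you state it. First, $(\omega'')'$ has a repeated weight (and $\omega''$ itself has top weight equal to $k$), so neither $\sU_{C,(\omega'')'}$, nor $\Theta_{\sU_{C,(\omega'')'}}$, nor the quantity $D_g(r,d-n_1(z),(\omega'')')$ is defined within the framework of Definition \ref{defn2.4} and Theorem \ref{thm2.6}; Lemma \ref{lem3.9} as stated does not output such data, so your first displayed equality is not an application of that lemma but a statement requiring its own proof. Second, your projection-formula argument is run ``on the GIT quotients,'' but the induced map of GIT quotients is \emph{not} a Grassmannian bundle: over strictly semistable points, $s$-equivalence collapses the $\mathrm{Gr}(m,n_1(z))$-fibers (this collapse is exactly what the paper's injectivity computation, exhibiting $(E,Q_{\bullet+1}(E)_z)$ as $s$-equivalent to $(E',\cdot)\oplus(\,_zQ_1(E)_z,\cdot)$, makes precise). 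The fibration and projection-formula argument can only be carried out upstairs on $\sR^{ss}$ together with descent of invariant sections, or else replaced by the paper's direct isomorphism argument; as written, the proposal asserts the conclusion of the hard step rather than proving it.
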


\begin{proof}
 For a semistable parabolic bundle $E$ with parabolic structures determined by $\omega''$, let its quasi-parabolic structure at $z\in I$ is given by
 $$E_z\twoheadrightarrow Q_{l_z+1}(E)_z \twoheadrightarrow Q_{l_z}(E)_z\twoheadrightarrow \cdots \twoheadrightarrow Q_1(E)_z\twoheadrightarrow Q_0(E)_z=0.$$
Let $E'=ker\{E\twoheadrightarrow Q_1(E)_z\}$, then $E'$ has quasi-parabolic structure
$$E'_z \twoheadrightarrow Q'_{l_z}(E)_z\twoheadrightarrow \cdots \twoheadrightarrow Q'_1(E)_z\twoheadrightarrow Q'_0(E)_z=0$$
of type $\vec n'(z)=(n_2(z),...,n_{l_z+1}(z),n_1(z))$ at $z\in I$, where
$$Q'_i(E)_z=ker\{Q_{i+1}(E)_z\twoheadrightarrow Q_1(E)_z\}, \quad (1\leq i\leq l_z).$$
Then we show that $E'$ is a semistable parabolic bundle with parabolic structure determined by $\omega'$ if and only if
$E$ is a semistable parabolic bundle with parabolic structure determined by $\omega''$. In fact, by direct computation, we have
$$\chi(E)+\frac{1}{k}\sum^{l_z+2}_{i=1}a_i''(z)n_i''(z)=\frac{r}{k}a_2(z)+\chi(E')+\frac{1}{k}\sum^{l_z+1}_{i=1}a_i'(z)n_i'(z),$$
which implies that ${\rm p}ar_{\omega''}\mu(E)=\frac{a_2(z)}{k}+{\rm p}ar_{\omega'}\mu(E')$. For any sub-bundle $\sF\subset E$, let $\sF'\subset E'$ be the sub-bundle
such that $$0\to \sF'\to\sF\to Q_1(E)^{\sF}_z\to 0$$ is an exact sequence of sheaves. Then ${\rm p}ar_{\omega''}\mu(\sF)={\rm p}ar_{\omega'}\mu(\sF')+\frac{a_2(z)}{k}$.
Thus $E'$ is semistable if and only if $E$ is semistable. The construction can be applied to a family of parabolic sheaves, which induces
$$\sU_{C,\,\omega''}=\mathcal{U}_C(r,d-m,\omega'')\xrightarrow{\varphi} \sU_{C,\,\omega'}=\mathcal{U}_C(r,d-n_1(z),\omega').$$
One check directly that $\varphi^*\Theta_{\sU_{C,\,\omega'}}=\Theta_{\sU_{C,\,\omega''}}$ (i.e., it pulls back an ample line bundle to an
ample line bundle), which implies that $\varphi$ is a finite surjective morphism. To show that $\varphi$ is a injective morphism, which implies that
$\varphi$ is an isomorphism since $\sU_{C,\,\omega'}$ and $\sU_{C,\,\omega'}$ are normal projective varieties, we note
$Q'_i(E)_z=Q_{i+1}(E)^{E'}_z\subset Q_{i+1}(E)_z$ is the image of $E'_z\to E_z\twoheadrightarrow Q_{i+1}(E)_z$. Then $(E',Q'_{\bullet}(E)_z)=(E',Q_{\bullet+1}(E)^{E'}_z)$
is a parabolic subsheaf of $(E,Q_{\bullet+1}(E)_z)$ and we have exact sequence
$$0\to (E',Q_{\bullet+1}(E)^{E'}_z) \to (E,Q_{\bullet+1}(E)_z)\to ( \,_zQ_1(E)_z, Q_1(E)_{\bullet+1})\to 0$$
of parabolic sheaves, where
$$Q_1(E)_{\bullet+1}: Q_1(E)_z\twoheadrightarrow Q_1(E)_z\twoheadrightarrow\cdots\twoheadrightarrow Q_1(E)_z\twoheadrightarrow 0.$$
By direct computations, we have
$${\rm p}ar_{\omega''}\mu((E',Q_{\bullet+1}(E)^{E'}_z))={\rm p}ar_{\omega''}\mu((E,Q_{\bullet+1}(E)_z)).$$
Thus $(E,Q_{\bullet+1}(E)_z)$ is $s$-equivalent to $$(E',Q_{\bullet+1}(E)^{E'}_z)\oplus  ( \,_zQ_1(E)_z, Q_1(E)_{\bullet+1}),$$
which implies that $\varphi$ is a injective morphism, and we are done.
\end{proof}

\begin{rmks}\label{rmks3.11} (1) The moduli spaces $\sU_{C,\,\omega''}$ and theta line bundles $\Theta_{\sU_{C,\,\omega''}}$ are constructed
in \cite{Su3} for the case $a_{l_z+1}(z)-a_1(z)=k$, vanishing theorems can be generalized to this case.

(2) Let $\omega''=H_z^m(\omega)$, we will simply call $H_z^m(\omega)$ a Hecke transformation of $\omega$ at $z\in I$. Then
\ga{3.8} { D_g(r,d,\omega)=D_g(r, d-m, H^m_z(\omega)).}
\end{rmks}

Now we can prove another version of recurrence relation \eqref{3.4}, in which the degree $d$ is kept unchanged.

\begin{thm}\label{thm3.12} For any partitions $g=g_1+g_2$ and $I=I_1\cup I_2$, let
$$W_k=\{\,\lambda=(\lambda_1,...,\lambda_r)\,|\, 0=\lambda_r\le\lambda_{r-1}\le\cdots\le\lambda_1\le k\,\}$$
$$W'_k=\left\{\,\lambda\in W_k\,\,\mid\,\,\left(\sum_{x\in
I_1}\sum^{l_x}_{i=1}d_i(x)r_i(x)+\sum^r_{i=1}\lambda_i\right) \equiv 0({\rm mod}\,\,r)\right\}.$$
Then we have the following recurrence relation
\ga{3.9} {D_g(r,d,\omega)=\sum_{\mu\in W'_k} D_{g_1}(r, 0,\omega_1^{\mu})\cdot D_{g_2}(r,d,\omega_2^{\mu}).}
\end{thm}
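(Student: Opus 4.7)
The plan is to derive \eqref{3.9} from the recurrence \eqref{3.4} of Theorem \ref{thm3.6} by applying iterated Hecke transformations at the newly created parabolic points $x_1 \in C_1$ and $x_2 \in C_2$ so as to normalize the two factor degrees to $0$ and $d$ respectively. The essential tool is the Hecke invariance \eqref{3.8}, namely $D_g(r,d,\omega) = D_g(r, d-m, H_z^m(\omega))$, assembled from Lemmas \ref{lem3.9} and \ref{lem3.10}.

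Fix $\mu = (\mu_1,\ldots,\mu_r)$ with $0 \le \mu_r \le \cdots \le \mu_1 < k$ appearing in \eqref{3.4}; using $n_1^\omega + n_2^\omega = \chi$ one checks immediately that $d_1^\mu + d_2^\mu = d$. I would then apply a sequence of Hecke transformations at $x_1$ on the first factor (built from iterations of Lemma \ref{lem3.9}, amalgamating consecutive steps via Lemma \ref{lem3.10} whenever the top quasi-parabolic stratum has several jumps) to convert $(d_1^\mu, \omega_1^\mu)$ into $(0, \omega_1^\lambda)$; a mirror sequence at $x_2$ converts $(d_2^\mu, \omega_2^\mu)$ into $(d, \omega_2^\lambda)$ for the same $\lambda$. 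Under iterated Hecke, the weight vector at $x_1$ is shifted uniformly down by $\mu_r$ while the flag is cyclically permuted, so the resulting data coincide with those attached by Notation \ref{nota3.5} to
\[
\lambda = (\mu_1 - \mu_r,\, \mu_2 - \mu_r,\, \ldots,\, \mu_{r-1} - \mu_r,\, 0) \in W_k,
\]
and the duality $r_i(x_2) = r - r_{l-i+1}$ of Notation \ref{nota3.5} guarantees that the mirror iteration at $x_2$ produces the same $\lambda$ on the second factor.

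It remains to identify the summation range. The degree $d_1^\mu = n_1^\omega + \tfrac{1}{k}\sum_i \mu_i + r(g_1-1)$ is an integer exactly when
\[
r\ell_1 + \sum_{x\in I_1}\sum_i d_i(x) r_i(x) + \sum_i \mu_i \equiv 0 \pmod{k}.
\]
Under the substitution $\mu_i = \lambda_i + \mu_r$ with $\lambda \in W_k$, this constraint, combined with the admissible range of $\mu_r$, reduces to the congruence $\sum_{x\in I_1}\sum_i d_i(x) r_i(x) + \sum_i \lambda_i \equiv 0 \pmod{r}$ defining $W'_k$. Reindexing the sum in \eqref{3.4} by $\lambda \in W'_k$ and invoking \eqref{3.8} on each factor then yields \eqref{3.9}.

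The principal obstacle is the combinatorial bookkeeping: one must verify that iterated Hecke at $x_1$ lands on precisely the Notation \ref{nota3.5} data attached to $\lambda$, that the mirror iteration at $x_2$ respects the duality $r_i(x_2) = r - r_{l-i+1}$ so as to produce the same $\lambda$ on the second factor, and that the integrality condition on $d_1^\mu$ really reduces to the mod-$r$ congruence defining $W'_k$. Once these identifications are checked, the re-indexing of \eqref{3.4} into \eqref{3.9} is immediate.
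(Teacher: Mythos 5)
Your overall strategy coincides with the paper's: start from \eqref{3.4} and use the Hecke invariance \eqref{3.8} to normalize the degrees of the two factors, then reindex the sum over $W'_k$. However, there is a genuine error in the key identification. You claim that the iterated Hecke transformation at $x_1$ turns $(d_1^{\mu},\omega_1^{\mu})$ into $(0,\omega_1^{\lambda})$ with $\lambda=(\mu_1-\mu_r,\ldots,\mu_{r-1}-\mu_r,0)$, i.e.\ a uniform downward shift of $\mu$. A uniform shift of the weights is the trivial equivalence $\lambda\sim\lambda'$; it changes neither the moduli space nor the degree, so it can only account for a degree change that is a multiple of $r$ (it is $H^r(\mu)$ in the paper's notation, the full cycle of $r$ elementary Hecke steps). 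But $d_1^{\mu}$ is in general \emph{not} divisible by $r$ --- the set $Q_k=\{\mu\mid d_1^{\mu}\in\mathbb{Z}\}$ contains $\mu$ with $d_1^{\mu}\equiv i^{\mu}\not\equiv 0\ ({\rm mod}\ r)$, as the paper's own surjectivity argument exhibits. For such $\mu$ one must apply exactly $r-i^{\mu}$ elementary Hecke steps, and the resulting partition is $\phi(\mu)=H^{r-i^{\mu}}(\mu)$ given by \eqref{3.13}: a cyclic rotation of the entries by $i^{\mu}$ positions together with the appropriate $k$-complementation, which is a genuinely different partition from your uniform shift. Your formula is only correct in the special case $i^{\mu}=0$.

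The second gap is that you never establish that the reindexing map $Q_k\to W'_k$, $\mu\mapsto\lambda$, is a bijection; you list it under ``combinatorial bookkeeping.'' This is in fact the technical heart of the argument: the paper devotes most of the proof of Theorem \ref{thm3.12} to showing that $\phi:Q_k\to W'_k$ is injective (via the norm computation \eqref{3.11} combined with \eqref{3.10} and a case analysis on $i^{\mu}$ versus $i^{\mu'}$) and surjective (by explicitly constructing, for each $\lambda\in W'_k$, the preimage $\mu$ from the Euclidean data $q^{\lambda},r^{\lambda}$ of $\frac{k\cdot n_1^{\omega}+|\lambda|}{r}$). Without bijectivity the sum over $\mu\in Q_k$ does not become a sum over $\lambda\in W'_k$ with each term occurring exactly once, so \eqref{3.9} does not follow. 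Relatedly, your asserted reduction of the integrality condition on $d_1^{\mu}$ to the mod-$r$ congruence defining $W'_k$ is exactly the surjectivity statement and cannot be waved through, especially since it must be carried out for the correct map $\phi$ rather than for the uniform shift.
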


\begin{proof} Let $P_k=\{\mu=(\mu_1,...,\mu_r)\,|\, 0\le\mu_r\le\cdots\le\mu_1< k\,\}$, by the recurrence relation \eqref{3.4}, we have
$$D_g(r,d,\omega)=\sum_{\mu\in Q_k}D_{g_1}(r,d_1^{\mu},\omega_1^{\mu})\cdot D_{g_2}(r,d_2^{\mu},\omega_2^{\mu})$$
where $Q_k=\{\mu=(\mu_1,\cdots, \mu_r)\in P_k\,|\, d_1^{\mu}\in \mathbb{Z}\}$. Recall definition of $d^{\mu}_j$ in
Theorem \ref{thm3.6}, which are integers such that $d_1^{\mu}+d_2^{\mu}=d$ and
\ga{3.10} {k(d_1^{\mu}+r)=k\cdot n_1^{\omega}+|\mu|,\quad |\mu|=\sum^r_{i=1}\mu_i}
where $n_1^{\omega}$ is the rational number defined in Theorem \ref{thm3.6}.

For $\mu=(\mu_1,\cdots, \mu_r),$ $0\leq \mu_r\leq \cdots \leq \mu_1\leq k$, let
$$H^1(\mu)=(k-\mu_{r-1}+\mu_r,\mu_1-\mu_{r-1},\mu_2-\mu_{r-1},\cdots, \mu_{r-2}-\mu_{r-1},0),$$
$H^{m}(\mu):=H^1(H^{m-1}(\mu))$ for $2\leq m\leq r$. Then, when $1\le m<r$,
$$H^m(\mu)_j=\left\{
\begin{array}{llll} k-\mu_{r-m}+\mu_{r-m+j} &\mbox{when $1\le j\le m$}\\
\mu_{j-m}-\mu_{r-m}&\mbox{when
$j>m $}\end{array}\right.$$
and $H^r(\mu)=(\mu_1-\mu_r,\mu_2-\mu_r,...,\mu_{r-1}-\mu_r,0)$. Moreover
\ga{3.11} { |H^m(\mu)|=\left\{
\begin{array}{llll} k\cdot m-r\cdot \mu_{r-m}+|\mu| &\mbox{when $m<r$}\\
-r\cdot\mu_r+|\mu|&\mbox{when
$m=r $}\end{array}\right. }
Let $0\le i^{\mu}<r$ be the unique integer such that $d_1^{\mu}\equiv i^{\mu}({\rm mod}\,\,r)$, let
$$\phi(\mu):=H^{r-i^{\mu}}(\mu).$$
Then, by \eqref{3.11}, it is easy to see that we have a map
\ga{3.12} {\phi:Q_k\to W'_k.}
One can check that $\omega_i^{\phi(\mu)}$ is a Hecke transformation of $\omega_i^{\mu}$ ($i=1,\,2$),
thus $D_{g_1}(r,d^{\mu}_1,\omega_1^{\mu})=D_{g_1}(r,0,\omega_1^{\phi(\mu)})$, $D_{g_2}(r,d_2^{\mu},\omega_2^{\mu})=D_{g_2}(r,d,\omega_2^{\phi(\mu)})$
by Lemma \ref{lem3.9} and Lemma \ref{lem3.10}. To prove the recurrence relation \eqref{3.9}, it is enough to show that
$\phi$ is bijective.

To prove the injectivity of $\phi$, let $\phi(\mu)=\phi(\mu')$, it is enough to show $i^{\mu}=i^{\mu'}$.
If both $i^{\mu}$ and $i^{\mu'}$ are nonzero, note $|\phi(\mu)|=k(r-i^{\mu})-r\mu_{i^{\mu}}+|\mu|$,
by $\phi(\mu)=\phi(\mu')$ and \eqref{3.10}, there exists a $q\in\mathbb{Z}$ such that
$$r\cdot(\mu'_{i^{\mu'}}-\mu_{i^{\mu}})=k(d_1^{\mu'}-i^{\mu'}-(d_1^{\mu}-i^{\mu}))=k\cdot r\cdot q.$$
Thus $k>|\mu'_{i^{\mu'}}-\mu_{i^{\mu}}|=k|q|$, which implies $q=0$ and $\mu'_{i^{\mu'}}=\mu_{i^{\mu}}$. If $i^{\mu}\neq i^{\mu'}$, let
$a=i^{\mu}-i^{\mu'}>0$,  then formula
\ga{3.13} {\phi(\mu)_j=\left\{
\begin{array}{llll} k-\mu_{i^{\mu}}+\mu_{j+i^{\mu}} &\mbox{when $1\le j\le  r-i^{\mu}$}\\
\mu_{j-r+i^{\mu}}-\mu_{i^{\mu}}&\mbox{when
$j>r-i^{\mu} $}\end{array}\right.}
implies $\mu_a=k+\mu_r'\ge k$, which is a contradiction since $\mu\in Q_k$. If
$i^{\mu}=0$, $i^{\mu'}$ must be zero. Otherwise, the same arguments imply $\mu'_{i^{\mu'}}=\mu_r$ and $\mu_j=k+\mu'_{i^{\mu'}+j}$
for all $1\le j\le r-i^{\mu'}$.

To prove that $\phi$ is surjective, by using \eqref{3.11}, \eqref{3.10} becomes
\ga{3.14} {\frac{k\cdot n_1^{\omega}+|\phi(\mu)|}{r}=\left\{
\begin{array}{llll} k\cdot\frac{d_1^{\mu}+2r-i^{\mu}}{r}-\mu_{i^{\mu}} &\mbox{when $i^{\mu}>0$}\\
k\cdot\frac{d_1^{\mu}+r}{r}-\mu_r &\mbox{when $i^{\mu}=0 $}\end{array}\right.}
For any $\lambda=(\lambda_1,...,\lambda_{r-1},0)\in W'_k$, there are unique integers $q^{\lambda}$ and $0\le r^{\lambda}<k$ such that
$$\frac{k\cdot n_1^{\omega}+|\lambda|}{r}=k\cdot q^{\lambda}-r^{\lambda}.$$
If $\lambda_1+r^{\lambda}<k$, let $\mu=(\lambda_1+r^{\lambda}, ..., \lambda_{r-1}+r^{\lambda},r^{\lambda})\in P_k$, then $d_1^{\mu}=r(q^{\lambda}-1)$
by \eqref{3.10}. Thus $i^{\mu}=0$ and $\phi(\mu)=\lambda$. If $\lambda_1+r^{\lambda}\ge k$, since $\lambda_r+r^{\lambda}<k$, there exists an unique $1\le i_0\le r-1$ such that $$\lambda_{i_0}+r^{\lambda}\ge k, \qquad \lambda_{i_0+1}+r^{\lambda}< k.$$
Let $\mu_j=\lambda_{i_0+j}+r^{\lambda}$ ($1\le j\le r-i_0$) and $\mu_{r-i_0+j}=\lambda_j+r^{\lambda}-k$ ($1\le j\le i_0$).
Then $\mu=(\mu_1,...,\mu_r)\in Q_k$ with $d_1^{\mu}=r(q^{\lambda}-1)-i_0$ and $i^{\mu}=r-i_0$. It is easy to see that $\phi(\mu)=\lambda$.
\end{proof}

\section {Proof of Verlinde formula}

As an application of the recurrence relation \eqref{3.3} and \eqref{3.9}, we prove a closed formula of $D_g(r,d,\omega)$ (the so called Verlinde formula).
Recall
\ga{4.1} {S_{\lambda}(z_1,...,z_r)=\frac{|z_j^{\lambda_i+r-i}|}{|z_j^{r-i}|}=\frac{|z_j^{\lambda_i+r-i}|}{\Delta(z_1,...,z_r)}} is the so called
Schur polynomial of $\lambda=(\lambda_1\ge\lambda_2\ge\cdots\ge\lambda_r\ge 0)$,
$$\Delta(z_1,...,z_r)=\prod_{i<j}(z_i-z_j).$$
We give here an detail proof of some identities of Schur polynomials.

\begin{prop}\label{prop4.1} For $\vec v=(v_1,\ldots,v_r)$, $0\le v_r<\cdots <v_1<r+k$, let
$$S_{\lambda}\left({\rm exp}\,2\pi i\frac{\vec v}{r+k}\right)=S_{\lambda}(e^{2\pi i\frac{v_1}{r+k}},...,e^{2\pi i\frac{v_r}{r+k}}),$$
$P_k=\{\mu=(\mu_1,...,\mu_r)\,|\, 0\le\mu_r\le\cdots\le\mu_1< k\,\}$, $|\mu|:=\sum\mu_i$.
Then
\ga{4.2} {\aligned&\sum_{\mu\in P_k}S_{\mu}\left({\rm exp}\,2\pi i\frac{\vec v}{r+k}\right)\cdot S_{\mu^*}\left({\rm exp}\,2\pi i\frac{\vec v}{r+k}\right)\\&=
{\rm exp}\left(2\pi i\frac{k}{r+k}|\vec v|\right)\cdot\frac{k(r+k)^{r-1}}{\prod_{i<j}\left(2\sin\,\pi \frac{v_i-v_j}{r+k}\right)^2},\endaligned}
let $W_k=\{\mu=(\mu_1,...,\mu_r)\,|\,0=\mu_r\le\mu_{r-1}\le\cdots\le\mu_1\le k\,\}$, we have
\ga{4.3}{\aligned&\sum_{\mu\in W_k} S_{\mu}\left({\rm exp}\,2\pi i\frac{\vec v}{r+k}\right)\cdot S_{\mu^*}\left({\rm exp}\,2\pi i\frac{\vec {v}}{r+k}\right)\\&={\rm exp}\left(2\pi i\frac{k}{r+k}|\vec v|\right)\cdot\frac{r(r+k)^{r-1}}{\prod_{i<j}\left(2\sin\,\pi \frac{v_i-v_j}{r+k}\right)^2}\endaligned}
and, if $\vec v\nsim\vec{v'}$ (here $\vec v\sim\vec{v'}\,\Leftrightarrow\,\vec v-\vec{v'}=(a,...,a)$ for some $a\in\mathbb{Z}$),
\ga{4.4}{\aligned \sum_{\mu\in W_k}
&{\rm exp}\,2\pi i\frac{-|\mu|\cdot |\vec v|}{r(r+k)}\cdot
{\rm exp}\,2\pi i\frac{-|\mu^*|\cdot |\vec {v'}|}{r(r+k)}\cdot\\& S_{\mu}\left({\rm exp}\,2\pi i\frac{\vec v}{r+k}\right)\cdot S_{\mu^*}\left({\rm exp}\,2\pi i\frac{\vec {v'}}{r+k}\right)
=0.\endaligned}
\eqref{4.2} and \eqref{4.3} are invariant under the equivalence relation $\vec v\sim\vec{v'}$.
\end{prop}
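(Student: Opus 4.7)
The plan is to reduce all three identities to a finite Fourier orthogonality via the bialternant formula
\[
S_\mu(\xi^{v_1}, \ldots, \xi^{v_r}) = \frac{\det(\xi^{v_j a_i})_{1 \le i, j \le r}}{\Delta(\xi^{\vec v})}, \qquad a_i = \mu_i + r - i,
\]
with $\xi = e^{2\pi i/(r+k)}$ and $N = r + k$. The map $\mu \mapsto A_\mu := \{a_1 > a_2 > \cdots > a_r\}$ is a bijection of $W_k$ with $r$-subsets of $\{0, 1, \ldots, N - 1\}$ containing $0$, and of $P_k$ with $r$-subsets avoiding $N - 1$. With the natural dualization $\mu \leftrightarrow \mu^*$, the set $A_{\mu^*}$ is the reflection of $A_\mu$ under $a \mapsto (N-1) - a$. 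Reversing rows of the determinant and combining with the Vandermonde conjugation identity $\overline{\Delta(\xi^{\vec v})} = (-1)^{\binom{r}{2}} \xi^{-(r-1)|\vec v|} \Delta(\xi^{\vec v})$ then yields the crucial relation
\[
S_{\mu^*}(\xi^{\vec v}) = \xi^{k|\vec v|}\, \overline{S_\mu(\xi^{\vec v})},
\]
which accounts for the prefactor $e^{2\pi i k|\vec v|/(r+k)}$ on the right of \eqref{4.2} and \eqref{4.3} and converts both sums into $\sum_\mu |S_\mu(\xi^{\vec v})|^2$ times that prefactor.

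The core computation is then Cauchy-Binet. Regarding $M = (\xi^{v_j a})_{1 \le j \le r,\, 0 \le a \le N-1}$ as an $r \times N$ matrix,
\[
\sum_{|A|=r,\, A \subset \{0, \ldots, N-1\}} |\det(M|_A)|^2 = \det(M \overline{M}^T) = \det\!\left(\sum_{a=0}^{N-1} \xi^{a(v_i - v_j)}\right)_{i,j} = N^r,
\]
since the inner sum is $N\delta_{v_i, v_j}$ (using $0 \le v_r < \cdots < v_1 < N$). Dividing by $|\Delta(\xi^{\vec v})|^2 = \prod_{i<j}(2\sin\pi(v_i - v_j)/N)^2$ recovers the target denominator. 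To restrict the sum to $W_k$ or $P_k$, I would use that $|\det(M|_A)|^2$ is invariant under the cyclic shift $A \mapsto A + 1 \pmod N$, since this multiplies each column of $M|_A$ by the unimodular scalar $\xi^{v_j}$. An orbit count then shows that each cyclic orbit meets $\{A : 0 \in A\}$ in $r/N$ of its elements and $\{A : N - 1 \notin A\}$ in $(N-r)/N = k/N$, giving
\[
\sum_{A \ni 0} |\det|^2 = r\,N^{r-1}, \qquad \sum_{A \not\ni N-1} |\det|^2 = k\,N^{r-1},
\]
producing the factors $r(r+k)^{r-1}$ and $k(r+k)^{r-1}$ in \eqref{4.3} and \eqref{4.2}. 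Invariance of both sides under $\vec v \sim \vec{v}'$ is clear, since translating $\vec v$ by $(a, \ldots, a)$ multiplies each $\xi^{v_j}$ by $\xi^a$, leaving $|S_\mu|^2$ unchanged and shifting $\xi^{k|\vec v|}$ by $\xi^{kra} = 1$.

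For the orthogonality \eqref{4.4}, the same Cauchy-Binet applied to $M$ and the analogous $M'$ built from $\vec{v}'$ gives $\sum_A \det(M|_A) \overline{\det(M'|_A)} = \det(N\delta_{v_i, v'_j})_{i,j}$, which vanishes as soon as the sets $\{v_i\}$ and $\{v'_j\}$ differ mod $N$. The two extra phase factors $e^{-2\pi i|\mu||\vec v|/(rN)}$ and $e^{-2\pi i|\mu^*||\vec{v}'|/(rN)}$ are inserted precisely so that, after using $S_{\mu^*}(\xi^{\vec{v}'}) = \xi^{k|\vec{v}'|}\overline{S_\mu(\xi^{\vec{v}'})}$ and the translation rule $S_\mu(\xi^{\vec v + t(1, \ldots, 1)}) = \xi^{t|\mu|} S_\mu(\xi^{\vec v})$ (which follows from $|\mu| = \sum a_i - \binom{r}{2}$), the whole summand becomes invariant under $\vec v \mapsto \vec v + (1, \ldots, 1)$ and $\vec{v}' \mapsto \vec{v}' + (1, \ldots, 1)$. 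This upgrades the set-level vanishing from Cauchy-Binet to vanishing under the full shift equivalence $\vec v \not\sim \vec{v}'$, and the restriction from all $r$-subsets to $W_k$ is handled by the same orbit-averaging (where the nontrivial phase $\xi^{|\vec v| - |\vec{v}'|}$ picked up by $A \mapsto A + 1$ is exactly cancelled by the shift in $|\mu|$). The main obstacle will be the phase and sign bookkeeping: verifying $S_{\mu^*}(\xi^{\vec v}) = \xi^{k|\vec v|}\overline{S_\mu(\xi^{\vec v})}$, where signs from row reversal and Vandermonde conjugation must cancel correctly, and implementing the phase-weighted orbit average for \eqref{4.4} so that the restricted $W_k$-sum inherits the vanishing from the unrestricted one.
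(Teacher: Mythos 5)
Your treatment of \eqref{4.2} and \eqref{4.3} is correct, and at its core it is the paper's argument: the bialternant formula reduces everything to the orthogonality $\sum_{a=0}^{N-1}\xi^{a(v_i-v_j)}=N\delta_{ij}$ (your Cauchy--Binet identity $\det(M\overline{M}^T)=N^r$ is the paper's computation $\sum_{t\in T_k}J(e^{\vec v})(t)\overline{J(e^{\vec v})(t)}=|\mathfrak{S}_r|\cdot|T_k|$ reorganized), and your relation $S_{\mu^*}(\xi^{\vec v})=\xi^{k|\vec v|}\overline{S_{\mu}(\xi^{\vec v})}$ is exactly the paper's use of $\mathbb{S}_{\mu^*}(V)=\det(V)^k\otimes\mathbb{S}_{\mu}(V^*)$. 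Where you genuinely differ is in how the sum is restricted to $P_k$ and $W_k$: the paper sums over all of $\bar P_k$, computes the complementary piece via the subtorus $T_k'$ with first entry $1$, and subtracts ($ (r+k)^r-r(r+k)^{r-1}=k(r+k)^{r-1}$), whereas you use invariance of $|\det(M|_A)|^2$ under $A\mapsto A+1\ (\mathrm{mod}\ N)$ and the orbit-fraction counts $r/N$ and $k/N$. Both are valid; yours handles $W_k$ and $P_k$ uniformly. (Your parenthetical $\xi^{kra}=1$ in the last step is false in general --- both sides of \eqref{4.2} and \eqref{4.3} pick up the same nontrivial factor $\xi^{kra}$ --- but this does not affect the conclusion.)

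The proof of \eqref{4.4} has a genuine gap. After applying $S_{\mu^*}(\xi^{\vec v'})=\xi^{k|\vec v'|}\overline{S_\mu(\xi^{\vec v'})}$ and $|\mu^*|=rk-|\mu|$, the summand becomes $\phi(A)\det(M|_A)\overline{\det(M'|_A)}/(\Delta(\xi^{\vec v})\overline{\Delta(\xi^{\vec v'})})$ with $\phi(A)=e^{-2\pi i(\sum_{a\in A}a-\binom{r}{2})c/(rN)}$ and $c=|\vec v|-|\vec v'|$. Under the modular shift $A\mapsto A+1$ the determinant product is multiplied by exactly $\xi^{c}$, but $\sum_{a\in A}a$ changes by $r-N\cdot\#\{a\in A:a=N-1\}$; the cancellation you invoke therefore holds only for non-wrapping shifts, and each wraparound leaves a residual factor $e^{2\pi ic/r}$. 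So the phased summand is \emph{not} constant on cyclic orbits unless $r\mid c$, and the orbit-averaging step collapses. Moreover, even when $r\mid c$, the unrestricted phased sum is not the quantity $\det(N\delta_{v_i,v'_j})$ from your Cauchy--Binet step: the phases amount to replacing the exponents $v_j,\,v'_j$ by $v_j-|\vec v|/r,\,v'_j-|\vec v'|/r$, and one must show the resulting Gram matrix is singular exactly when $\vec v\nsim\vec v'$. A concrete instance: $r=2$, $k=1$, $\vec v=(1,0)$, $\vec v'=(2,0)$ gives $c=-1$; the three $2$-subsets of $\{0,1,2\}$ form a single orbit, and the two terms of \eqref{4.4} are $t$ and $-t$ rather than equal --- the sum vanishes, but not by the mechanism you describe.

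The gap is repairable along your lines, but the fix is a different mechanism in the case $r\nmid c$: writing the elements of an orbit containing $0$ as $A_0-b$ for $b\in A_0$, one finds $\phi(A_0-b)\,g(A_0-b)=\phi(A_0)\,g(A_0)\,e^{-2\pi ic\,\#\{a\in A_0:\,a<b\}/r}$, and as $b$ runs over $A_0$ the exponent $\#\{a\in A_0:a<b\}$ runs over $0,1,\dots,r-1$, so each orbit contributes the vanishing geometric sum $\sum_{j=0}^{r-1}e^{-2\pi ijc/r}=0$; only when $r\mid c$ does one fall back on the (shifted-exponent) Cauchy--Binet vanishing, which then does follow from $\vec v\nsim\vec v'$. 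The paper sidesteps this case split entirely by replacing $\mu$ with the trace-zero weight $\lambda^{\mu}_i=\mu_i+r-i-\frac{|\mu|+|\rho|}{r}$ and invoking character orthogonality on the group $\{t:\sum t_i=0,\ t_i-t_j\in\mathbb{Z}\}$, which encodes the diagonal shift and the fractional part in one stroke.
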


\begin{proof} To prove \eqref{4.2}, since $\mathbb{S}_{\mu^*}(V)={\rm det}(V)^k\otimes \mathbb{S}_{\mu}(V^*)$, we have
$$S_{\mu^*}\left({\rm exp}\,2\pi i\frac{\vec v}{r+k}\right)=\overline{S_{\mu}\left({\rm exp}\,2\pi i\frac{\vec v}{r+k}\right)}{\rm exp}\left(2\pi i\frac{k}{r+k}\sum_{i=1}^{r}v_i\right).$$ Thus it is enough to show that
\ga{4.5}{\aligned&\sum_{\mu\in P_k}S_{\mu}\left({\rm exp}\,2\pi i\frac{\vec v}{r+k}\right)\cdot \overline{S_{\mu}\left({\rm exp}\,2\pi i\frac{\vec v}{r+k}\right)}\\&=
\frac{k(r+k)^{r-1}}{\prod_{i<j}\left(2\sin\,\pi \frac{v_i-v_j}{r+k}\right)^2}.\endaligned}

For $\lambda=(\lambda_1,...,\lambda_r)$, the functions $e^{\tau(\vec{\lambda})}$, $J(e^{\vec\lambda})$ are defined by
$$\aligned &e^{\tau(\vec\lambda)}(\mathrm{diag}(t_1, \cdots,t_r)):=t_1^{\lambda_{\tau(1)}}\cdot \cdots\cdot t_r^{\lambda_{\tau(r)}}\\& J(e^{\vec\lambda})(\mathrm{diag}(t_1, \cdots,t_r)):=\sum_{\tau\in \mathfrak{S}_r}\epsilon(\tau)e^{\tau(\vec\lambda)}(\mathrm{diag}(t_1, \cdots,t_r)),\endaligned$$
where $\tau(\vec\lambda)=(\lambda_{\tau(1)},...,\lambda_{\tau(r)})$, $\mathfrak{S}_r$ is the symmetric group. Let
$$\Delta(\vec v)=\prod_{i<j}(e^{2\pi i\frac{v_i}{r+k}}-e^{2\pi i\frac{v_j}{r+k}})$$
and $\rho=(r-1, r-2, ..., 0)$. By expansion of determinant, we have
$$\aligned &S_{\mu}\left({\rm exp}\,2\pi i\frac{\vec v}{r+k}\right)=\frac{1}{\Delta(\vec v)}\sum_{\tau\in \mathfrak{S}_r}\epsilon(\tau)e^{2\pi i\frac{\mu_1+r-1}{r+k}v_{\tau(1)}}
\cdot\cdots\cdot e^{2\pi i\frac{\mu_r}{r+k}v_{\tau(r)}}\\=&\frac{1}{\Delta(\vec v)}\sum_{\tau\in \mathfrak{S}_r}\epsilon(\tau)e^{\tau(\vec v)}\left({\rm exp}\,2\pi i\frac{\vec\mu+\vec\rho}{r+k}\right)=\frac{1}{\Delta(\vec v)}J(e^{\vec v})(t_{\mu}),\endaligned$$
where $t_{\mu}={\rm exp}\,2\pi i\frac{\vec\mu+\vec\rho }{r+k}$.
By $\Delta(\vec v)\overline{\Delta(\vec v)}=\prod_{i<j}\left(2\sin\,\pi \frac{v_i-v_j}{r+k}\right)^2$, we have
\ga{4.6} {\aligned &S_{\mu}\left({\rm exp}\,2\pi i\frac{\vec v}{r+k}\right)\cdot \overline{S_{\mu}\left({\rm exp}\,2\pi i\frac{\vec v}{r+k}\right)}
=\\&\frac{1}{\prod_{i<j}\left(2\sin\,\pi \frac{v_i-v_j}{r+k}\right)^2}J(e^{\vec v)})(t_{\mu})
\cdot \overline{J(e^{\vec v})(t_{\mu})}.\endaligned}

Let $T_k=\{\,t=\mathrm{diag}(e^{\frac{2\pi i}{r+k}t_1}, \cdots, e^{\frac{2\pi i}{r+k}t_r})\,|\, 0\le t_i<r+k\,\}\subset {\rm GL}(r)$ be the subgroup and
$T_k^{reg}=\{\,t\in T_k\,|\, t_i\ne t_j \text{if $i\neq j$}\,\}$. The group $\mathfrak{S}_r$ acts on $T_k$ by
$\tau(t)=\mathrm{diag}(e^{\frac{2\pi i}{r+k}t_{\tau(1)}}, \cdots, e^{\frac{2\pi i}{r+k}t_{\tau(r)}})$ and the functions
$$\aligned J(e^{\vec\lambda})(t)&=\sum_{\tau\in \mathfrak{S}_r}\epsilon(\tau)e^{2\pi i\frac{\lambda_{\tau(1)}}{r+k}t_{1}}
\cdot\cdots\cdot e^{2\pi i\frac{\lambda_{\tau(r)}}{r+k}t_{r}}\\&=
\sum_{\tau\in \mathfrak{S}_r}\epsilon(\tau)e^{2\pi i\frac{\lambda_1}{r+k}t_{\tau(1)}}
\cdot\cdots\cdot e^{2\pi i\frac{\lambda_r}{r+k}t_{\tau(r)}}\endaligned$$
for any $\lambda=(\lambda_1,...,\lambda_r)$ are ant-symmetric functions, thus $J(e^{\vec\lambda})(t)=0$ if $t\notin T_k^{reg}$. It is clear that $\mathfrak{S}_r$ acts
on $T_k^{reg}$ freely and
$$T^{reg}_k=\bigcup_{\mu\in \bar P_k}\mathfrak{S}_r\cdot t_{\mu},\quad t_{\mu}={\rm exp}\,2\pi i\frac{\mu+\rho }{r+k}.$$
The right hand side of \eqref{4.6} is a symmetric function, we have
$$\aligned &\sum_{\mu\in \bar P_k}S_{\mu}\left({\rm exp}\,2\pi i\frac{\vec v}{r+k}\right)\cdot \overline{S_{\mu}\left({\rm exp}\,2\pi i\frac{\vec v}{r+k}\right)}=
\frac{1}{|\mathfrak{S}_r|}\\&
\prod_{i<j}\left(2\sin\,\pi \frac{v_i-v_j}{r+k}\right)^{-2}\sum_{t\in T_k}J(e^{\vec v})(t)
\overline{J(e^{\vec v})(t)}\endaligned$$ where $\bar P_k=\{\mu=(\mu_1,...,\mu_r)\,|\,0\le\mu_r\le\cdots\le\mu_1\le k\}$.
To compute
$$\aligned&\sum_{t\in T_k}J(e^{\vec v})(t)
\overline{J(e^{\vec v})(t)}=\sum_{\tau,\sigma\in\mathfrak{S}_r}\epsilon(\tau)\cdot \epsilon(\sigma)\sum_{t\in T_k}
e^{\tau(\vec v)}(t)\cdot\overline{e^{\sigma(\vec v)}(t)},\endaligned$$
note $e^{\tau(\vec v)}$ and $e^{\sigma(\vec v)}$ are different character of $T_k$ when $\tau\neq \sigma$, we have
$$\sum_{t\in T_k}J(e^{\vec v})(t)\overline{J(e^{\vec v})(t)}=|\mathfrak{S}_r|\cdot |T_k|.$$  Thus
\ga{4.7}{\aligned&\sum_{\mu\in \bar P_k}S_{\mu}\left({\rm exp}\,2\pi i\frac{\vec v}{r+k}\right)\cdot \overline{S_{\mu}\left({\rm exp}\,2\pi i\frac{\vec v}{r+k}\right)}
\\&=\frac{(r+k)^r}{\prod_{i<j}\left(2\sin\,\pi \frac{v_i-v_j}{r+k}\right)^2}.\endaligned}

For $\mu\in P'_k:=\bar P_k\setminus P_k$, let
$t_{\mu}'=\mathrm{diag}(1, e^{2\pi i\frac{\mu_2+r-1}{r+k}}, \cdots, e^{2\pi i\frac{\mu_r+1}{r+k}} )$ and
$$T'_k=\{\,t=\mathrm{diag}(1, e^{\frac{2\pi i}{r+k}t_2}, \cdots, e^{\frac{2\pi i}{r+k}t_r})\,|\, 0\le t_i<r+k\,\}\subset T_k$$ be the subgroup and
${T'}_k^{reg}=T'_k\cap T^{reg}_k$. Then
$${T'}^{reg}_k=\bigcup_{\mu\in  P'_k}\mathfrak{S}_{r-1}\cdot t'_{\mu}.$$
Note $J(e^{\vec v})(t_{\mu})=e^{-2\pi i\frac{|\vec v|}{r+k}}J(e^{\vec v})(t'_{\mu})$, $J(e^{\vec v})(t)=0$ if $t\notin {T'}_k^{reg}$, we have
\ga{4.8} {\aligned &\sum_{\mu\in P'_k}S_{\mu}\left({\rm exp}\,2\pi i\frac{\vec v}{r+k}\right)\cdot \overline{S_{\mu}\left({\rm exp}\,2\pi i\frac{\vec v}{r+k}\right)}=\\&
\frac{1}{|\mathfrak{S}_{r-1}|}
\prod_{i<j}\left(2\sin\,\pi \frac{v_i-v_j}{r+k}\right)^{-2}\sum_{t\in  T'_k}J(e^{\vec v})(t)
\overline{J(e^{\vec v})(t)}\\&=\frac{r(r+k)^{r-1}}{\prod_{i<j}\left(2\sin\,\pi \frac{v_i-v_j}{r+k}\right)^2}.\endaligned}
Thus \eqref{4.7} and \eqref{4.8} imply the formula \eqref{4.2}. The proof of formula \eqref{4.3} is similar with formula \eqref{4.8}, we omit it.

Now we are going to prove formula \eqref{4.4}. To simpify notation, let
$$G_{\mu}(\vec v):={\rm exp}\,2\pi i\frac{-|\mu|\cdot |\vec {v}|}{r(r+k)}\cdot S_{\mu}\left({\rm exp}\,2\pi i\frac{\vec v}{r+k}\right).$$
Then it is equivalent to prove that, when $\vec v\nsim\vec{v'}$, we have
\ga{4.9}{\sum_{\mu\in W_k}G_{\mu}(\vec v)\overline{G_{\mu}(\vec {v'})}=0.}
Let $\lambda^{\mu}=(\lambda_1^{\mu},...,\lambda^{\mu}_r)$ with $\lambda_i^{\mu}=\mu_i+r-i-\frac{|\mu|+|\rho|}{r}$, then
$$\aligned G_{\mu}(\vec v)&=\frac{{\rm exp}\,2\pi i\frac{|\rho|\cdot |\vec {v}|}{r(r+k)}}{\Delta(\vec v)}\sum_{\tau\in \mathfrak{S}_r}\epsilon(\tau)e^{2\pi i\frac{\lambda^{\mu}_1}{r+k}v_{\tau(1)}}
\cdot\cdots\cdot e^{2\pi i\frac{\lambda^{\mu}_r}{r+k}v_{\tau(r)}}\\&=\frac{{\rm exp}\,2\pi i\frac{|\rho|\cdot |\vec {v}|}{r(r+k)}}{\Delta(\vec v)}\sum_{\tau\in \mathfrak{S}_r}\epsilon(\tau)e^{\tau(\vec v)}\left({\rm exp}\,2\pi i\frac{\lambda^{\mu}}{r+k}\right)\\&=\frac{{\rm exp}\,2\pi i\frac{|\rho|\cdot |\vec {v}|}{r(r+k)}}{\Delta(\vec v)}J(e^{\vec v})(t_{\lambda^{\mu}}),\quad t_{\lambda^{\mu}}={\rm exp}\,2\pi i\frac{\lambda^{\mu}}{r+k}.\endaligned$$
Since $e^{\sigma(\vec v)}$, $e^{\tau(\vec {v'})}$ ($\forall\,\sigma,\,\tau\in\mathfrak{S}_r$) are different
characters of a subgroup $T_k=\{\,t=\mathrm{diag}(e^{\frac{2\pi i}{r+k}t_1}, \cdots, e^{\frac{2\pi i}{r+k}t_r})\,|\sum t_i=0,\,t_i-t_j\in\mathbb{Z}\,\}\subset {\rm GL}(r)$
whenever $\vec v\nsim\vec {v'}$, we have
$$\aligned \sum_{\mu\in W_k}G_{\mu}(\vec v)\overline{G_{\mu}(\vec {v'})}&=
\frac{{\rm exp}\,2\pi i\frac{|\rho|\cdot (|\vec {v}|-|\vec{v'}|)}{r(r+k)}}{\Delta(\vec v)\overline{\Delta(\vec v')}}
\sum_{\mu\in W_k}J(e^{\vec v})(t_{\lambda^{\mu}})\cdot\overline{J(e^{\vec {v'}})(t_{\lambda^{\mu}})}\\&=
\frac{{\rm exp}\,2\pi i\frac{|\rho|\cdot (|\vec {v}|-|\vec{v'}|)}{r(r+k)}}{\Delta(\vec v)\overline{\Delta(\vec v')}|\mathfrak{S}_r|}
\sum_{t\in T_k}J(e^{\vec v})(t)\cdot\overline{J(e^{\vec {v'}})(t)}=0.
\endaligned$$
\end{proof}

\begin{nota}\label{nota4.2} For $\vec n(x)=(n_1(x),n_2(x),\cdots,n_{l_x+1}(x))$ and $$\vec a(x)=(a_1(x),a_2(x),\cdots,a_{l_x+1}(x))$$ with
$\sum n_i(x)=r$, $0\leq a_1(x)<a_2(x)<\cdots
<a_{l_x+1}(x)< k$, define
\ga{4.10} {\lambda_x=(\,\,\overbrace{\lambda_1,\ldots,\lambda_1}^{n_1(x)}\,,\,\,\overbrace{\lambda_2,\ldots,\lambda_2}^{n_2(x)}\,,\,\,\ldots,\, \,\overbrace{\lambda_{l_x+1},\ldots,\lambda_{l_x+1}}^{n_{l_x+1}(x)}\,\,)}
where $\lambda_i=k-a_i(x)$ ($1\le i\le l_x+1$).
\end{nota}

\begin{thm}\label{thm4.3} For given data $\omega=(k,\{\vec n(x),\vec a(x)\}_{x\in I})$, let
$$S_{\omega}(z_1,...,z_r)=\prod_{x\in I}S_{\lambda_x}(z_1,...,z_r),\quad |\omega|=\sum_{x\in I}|\lambda_x|$$
where $S_{\lambda_x}(z_1,...,z_r)$ are Schur polynomials and $|\lambda_x|$ denotes the total number of boxes in a Young diagram
associated to $\lambda_x$. Then
\ga{4.11}{\aligned &D_g(r,d,\omega)=(-1)^{d(r-1)}\left(\frac{k}{r}\right)^g(r(r+k)^{r-1})^{g-1}\\&\sum_{\vec v}
\frac{{\rm exp}\left(2\pi i\left(\frac{d}{r}-\frac{|\omega|}{r(r+k)}\right)\sum_{i=1}^{r}v_i\right)S_{\omega}\left({\rm exp}\,2\pi i\frac{\vec v}{r+k}\right)} {\prod_{i<j}\left(2\sin\,\pi \frac{v_i-v_j}{r+k}\right)^{2(g-1)}}\endaligned}
where $\vec v=(v_1,v_2,\ldots,v_r)$ runs through the integers $$0=v_r<\cdots <v_2<v_1<r+k.$$
\end{thm}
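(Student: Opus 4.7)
The plan is to define $\tilde D_g(r,d,\omega)$ to be the right-hand side of \eqref{4.11} and to prove by double induction on $g$ and $|I|$ that $\tilde D_g(r,d,\omega)=D_g(r,d,\omega)$. The induction is driven by the two recurrence relations proved in Section~3: relation \eqref{3.3} reduces $(g,|I|)\leadsto(g-1,|I|+2)$, so iterating brings the problem to genus zero; then \eqref{3.9} (taking $g_1=g_2=0$ and a suitable partition $I=I_1\cup I_2$) replaces $\tilde D_0(r,d,\omega)$ by a sum of products of values with strictly fewer parabolic points, eventually reducing to $|I|=3$ and, via Proposition~\ref{prop4.8}, to $D_0(r,0,\{\omega_s,\lambda_y,\lambda_z\})$, which is computed directly in Lemma~\ref{lem4.7}. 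Provided the proposed closed form satisfies both recurrences and agrees at the base case, the identification $\tilde D_g=D_g$ follows.

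To verify \eqref{3.3} for $\tilde D_g$, I would rewrite the quasi-parabolic data at the two new points $x_1,x_2$ (Notation~\ref{nota3.5}) in terms of $\mu$: under Notation~\ref{nota4.2}, the associated partitions produce Schur factors proportional to $S_\mu$ and $S_{\mu^*}$, the proportionality being an explicit character $\mathrm{exp}(2\pi i(\cdot)\sum v_i)$. Interchanging the $\mu$ and $\vec v$ summations, the inner $\mu$-sum is exactly $\sum_{\mu\in P_k}S_\mu(e^{2\pi i\vec v/(r+k)})S_{\mu^*}(e^{2\pi i\vec v/(r+k)})$, evaluated by identity \eqref{4.2} as $k(r+k)^{r-1}/\prod_{i<j}(2\sin\pi(v_i-v_j)/(r+k))^2$ times an explicit phase. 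This factor is precisely what is needed to pass from the prefactor $(k/r)^{g-1}(r(r+k)^{r-1})^{g-2}$ with sine exponent $2(g-2)$ in $\tilde D_{g-1}$ to the one with exponents $g$ and $2(g-1)$ in $\tilde D_g$, while $d$ and $|\omega|$ (and therefore the sign $(-1)^{d(r-1)}$ and the $d$-phase) are unchanged by the recurrence.

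To verify \eqref{3.9}, I would expand $\tilde D_{g_1}(r,0,\omega_1^\lambda)\cdot\tilde D_{g_2}(r,d,\omega_2^\lambda)$ as a double sum over $\vec v$ and $\vec{v'}$, interchanged with the $\lambda$-sum. The inner $\lambda$-sum then has the shape $\sum_{\lambda\in W'_k}S_\lambda(e^{2\pi i\vec v/(r+k)})\,S_{\lambda^*}(e^{2\pi i\vec{v'}/(r+k)})$ together with the characters $\mathrm{exp}(-2\pi i|\lambda|\,|\vec v|/r(r+k))$ coming from Hecke normalization (Lemmas \ref{lem3.9}, \ref{lem3.10}, Remark \ref{rmks3.11}) via the bijection $\phi\colon Q_k\xrightarrow{\sim}W'_k$ of Theorem~\ref{thm3.12}. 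The congruence defining $W'_k$ inside $W_k$ is exactly the character of a $\mathbb Z/r$-subgroup, so passing from $W'_k$ to $W_k$ produces a factor $1/r$ and restricts the $d$-phases to their correct values. The off-diagonal terms $\vec v\not\sim\vec{v'}$ vanish by identity \eqref{4.4}, and on the diagonal identity \eqref{4.3} collapses the $W_k$-sum, yielding exactly $\tilde D_g(r,d,\omega)$ after matching the powers $(k/r)^{g_1+g_2}$, $(r(r+k)^{r-1})^{g_1+g_2-2}$ and $\prod\sin^{2(g_1+g_2-2)}$.

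The main obstacle is the direct computation of $D_0(r,0,\{\omega_s,\lambda_y,\lambda_z\})$, since there is no smaller moduli to reduce to and one must compute cohomology on an explicit moduli of parabolic bundles on $\mathbb P^1$ with three marked points (this is what will occupy Lemma~\ref{lem4.7}). A secondary technical difficulty is the careful bookkeeping of the characters $\mathrm{exp}(2\pi i d\sum v_i/r)$ and the sign $(-1)^{d(r-1)}$ through the Hecke transformations and the bijection $\phi$: the point is that $\phi$ translates the integrality of $d_1^\mu$ into exactly the $\mathbb Z/r$-character that defines $W'_k$, which is what makes the orthogonality \eqref{4.4} apply cleanly.
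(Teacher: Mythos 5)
Your proposal follows essentially the same route as the paper: the paper also defines the Verlinde number $V_g(r,d,\omega)$ as the right-hand side of \eqref{4.11}, verifies that it satisfies the genus recursion \eqref{3.3} via identity \eqref{4.2} (Lemma \ref{lem4.4}) and the point-splitting recursion \eqref{3.9} via identities \eqref{4.3} and \eqref{4.4} (Lemma \ref{lem4.5}), and anchors the induction at $g=0$, $|I|\le 3$ through Proposition \ref{prop4.8}, Lemma \ref{lem4.6} and the moduli computation of Lemma \ref{lem4.7}. The only cosmetic difference is how the sum over $W'_k$ is related to the sum over $W_k$: you invoke averaging over the $\mathbb{Z}/r$-character, whereas the paper simply observes that $V_0(r,0,\omega_1^{\mu})=0$ for $\mu\in W_k\setminus W'_k$, so no factor of $1/r$ ever appears.
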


\begin{proof} Let $V_g(r,d,\omega)$ denote the right hand side of
formula \eqref{4.11} (the Verlinde number). When $|I|=0$, we define $V_g(r,d,\omega)$ to be
$$(-1)^{d(r-1)}\left(\frac{k}{r}\right)^g(r(r+k)^{r-1})^{g-1}\sum_{\vec v}
\frac{{\rm exp}\left(2\pi i\frac{d}{r}\sum_{i=1}^{r}v_i\right)} {\prod_{i<j}\left(2\sin\,\pi \frac{v_i-v_j}{r+k}\right)^{2(g-1)}}.$$
Note that both $V_g(r,d,\omega)$ and $D_g(r,d,\omega)$ (even the moduli space $\sU_{C,\,\omega}$ and theta line bundle $\Theta_{\sU_{C,\,\omega}}$ on it)
are invariant under the equivalence relation: $\lambda_x\sim\lambda'_x\,\Leftrightarrow\,\lambda_x-\lambda_x'=(a_x,a_x,...,a_x)$ for some integer $a_x\in \mathbb{Z}$.
Assume that $D_g(r,d,\omega)=V_g(r,d,\omega)$ when $|I|\le 3$ (we will prove it later). Then the proof is done by the following lemmas.
\end{proof}

\begin{lem}\label{lem4.4} If the formula \eqref{4.11} holds when $g=0$, then it holds for any $g>0$.
\end{lem}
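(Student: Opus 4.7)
The plan is to induct on $g\ge 1$, taking the base case $g=0$ from the hypothesis. For the inductive step I would combine the recurrence \eqref{3.3} of Theorem \ref{thm3.6}, namely
$$D_g(r,d,\omega)=\sum_{\mu\in P_k}D_{g-1}(r,d,\omega^\mu),\qquad P_k=\{\mu\,:\,0\le\mu_r\le\cdots\le\mu_1<k\},$$
with the inductive hypothesis applied to each term on the right. The lemma thereby reduces to proving the purely combinatorial identity
$$V_g(r,d,\omega)=\sum_{\mu\in P_k}V_{g-1}(r,d,\omega^\mu),$$
where $V_g$ denotes the right-hand side of formula \eqref{4.11}.

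The first step is to identify the two Schur factors that $\omega^\mu$ contributes at the new parabolic points $x_1,x_2$. Unpacking Notation \ref{nota3.5} together with Notation \ref{nota4.2}, a direct (bookkeeping-heavy) computation shows that $\lambda_{x_2}=\mu^*:=(k-\mu_r,\ldots,k-\mu_1)$ exactly, while $\lambda_{x_1}=\mu+(k-\mu_1-\mu_r)(1,\ldots,1)$ differs from $\mu$ only by a constant shift. Since both $V_g(r,d,\omega)$ and $D_g(r,d,\omega)$ are invariant under the equivalence $\lambda_x\sim\lambda_x+(a,\ldots,a)$ (the shift on the Schur side, $S_{\lambda+(c,\ldots,c)}=(z_1\cdots z_r)^c S_\lambda$, is exactly compensated by the change of $|\omega|$ in the phase factor $\exp(-2\pi i|\omega|\sum v_i/(r(r+k)))$), I may work with the representatives $\lambda_{x_1}=\mu$ and $\lambda_{x_2}=\mu^*$, so that $S_{\omega^\mu}=S_\omega\cdot S_\mu\cdot S_{\mu^*}$ and $|\omega^\mu|=|\omega|+|\mu|+|\mu^*|=|\omega|+rk$.

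Substituting these into the formula for $V_{g-1}(r,d,\omega^\mu)$, the extra summand $rk$ in $|\omega^\mu|$ produces an overall phase $\exp(-2\pi ik|\vec v|/(r+k))$ relative to the phase of $V_g(r,d,\omega)$, the Schur contribution is augmented by $S_\mu\,S_{\mu^*}$, and the sine denominator exponent drops from $2(g-1)$ to $2(g-2)$. Summing over $\mu\in P_k$ and invoking identity \eqref{4.2} of Proposition \ref{prop4.1},
$$\sum_{\mu\in P_k}S_\mu\!\left({\rm exp}\,2\pi i\tfrac{\vec v}{r+k}\right)S_{\mu^*}\!\left({\rm exp}\,2\pi i\tfrac{\vec v}{r+k}\right)={\rm exp}\!\left(2\pi i\tfrac{k|\vec v|}{r+k}\right)\cdot\frac{k(r+k)^{r-1}}{\prod_{i<j}\!\left(2\sin\pi\tfrac{v_i-v_j}{r+k}\right)^{2}},$$
the exponential factor exactly cancels the shift-induced phase, while the scalar $k(r+k)^{r-1}$ is precisely the multiplier needed to promote the prefactor $(k/r)^{g-1}(r(r+k)^{r-1})^{g-2}$ to $(k/r)^g(r(r+k)^{r-1})^{g-1}$, and the factor $\prod_{i<j}(2\sin\pi(v_i-v_j)/(r+k))^{-2}$ restores the denominator exponent to $2(g-1)$. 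Since \eqref{3.3} fixes $d$, the sign $(-1)^{d(r-1)}$ is unchanged, and the required identity $V_g(r,d,\omega)=\sum_\mu V_{g-1}(r,d,\omega^\mu)$ follows.

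The main obstacle is the combinatorial identification of $(\lambda_{x_1},\lambda_{x_2})$ with $(\mu,\mu^*)$ up to the shift equivalence and the careful alignment of every exponential factor under the recurrence; once this bookkeeping is settled, identity \eqref{4.2} of Proposition \ref{prop4.1} does all the analytic work and the inductive step closes in a single line.
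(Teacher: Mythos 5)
Your proposal is correct and follows essentially the same route as the paper: invoke the recurrence \eqref{3.3}, identify $\lambda_{x_1}\sim\mu$ and $\lambda_{x_2}=\mu^*$ (so that $S_{\omega^\mu}=S_\omega S_\mu S_{\mu^*}$ and $|\omega^\mu|=|\omega|+kr$ up to the shift equivalence), and let identity \eqref{4.2} of Proposition \ref{prop4.1} supply exactly the phase cancellation and the factor $k(r+k)^{r-1}$ needed to promote the prefactor and the sine exponent from $g-1$ to $g$. The bookkeeping you flag as the main obstacle (including the observation that the constant shift in $\lambda_{x_1}$ is absorbed by the $|\omega|$-dependence of the phase) is precisely what the paper's proof does.
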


\begin{proof} Recall the recurrence relation \eqref{3.3}, we have
\ga{4.12} {D_g(r,d,\omega)=\sum_{\mu}D_{g-1}(r,d,\omega^{\mu})}
where $\omega^{\mu}=(k, \{\vec n(x),\,\vec a(x)\}_{x\in I\cup\{x_1,\,x_2\}})$ was defined in Notation \ref{nota3.5} and
$\mu=(\mu_1,\ldots,\mu_r)$ runs through the integers $0\le\mu_r\le\cdots\le\mu_1< k$.

It is easy to check that $\lambda_{x_2}=(k-\mu_r,\ldots,k-\mu_1)$ and
$$\lambda_{x_1}=(\mu_1,\ldots,\mu_r)+(\mu_1+\mu_r-k,\,\mu_1+\mu_r-k,\ldots,\, \mu_1+\mu_r-k)$$
(in Notation \ref{nota4.2}). Thus, without loss of generality, we can assume
$$\lambda_{x_1}=\mu=(\mu_1,\ldots,\mu_r),\quad \lambda_{x_2}=\mu^*=(k-\mu_r,\ldots,k-\mu_1).$$
Assume that formula \eqref{4.11} holds for $g-1$, then
\ga{4.13}{\aligned &D_{g-1}(r,d,\omega^{\mu})=(-1)^{d(r-1)}\left(\frac{k}{r}\right)^{g-1}(r(r+k)^{r-1})^{g-2}\\&\sum_{\vec v}
\frac{{\rm exp}\left(2\pi i\left(\frac{d}{r}-\frac{|\omega^{\mu}|}{r(r+k)}\right)\sum_{i=1}^{r}v_i\right)S_{\omega^{\mu}}\left({\rm exp}\,2\pi i\frac{\vec v}{r+k}\right)} {\prod_{i<j}\left(2\sin\,\pi \frac{v_i-v_j}{r+k}\right)^{2(g-2)}}\endaligned}
where $|\omega^{\mu}|=|\omega|+k\cdot r$, $S_{\omega^{\mu}}
=S_{\omega}\cdot S_{\mu}\cdot S_{\mu^*}$. By \eqref{4.12} and \eqref{4.13},
$$\aligned&D_g(r,d,\omega)=(-1)^{d(r-1)}\left(\frac{k}{r}\right)^g(r(r+k)^{r-1})^{g-1}\\&\sum_{\vec v}
\frac{{\rm exp}\left(2\pi i\left(\frac{d}{r}-\frac{|\omega|}{r(r+k)}\right)\sum_{i=1}^{r}v_i\right)S_{\omega}\left({\rm exp}\,2\pi i\frac{\vec v}{r+k}\right)} {\prod_{i<j}\left(2\sin\,\pi \frac{v_i-v_j}{r+k}\right)^{2(g-1)}}\\&{\rm exp}\left(-2\pi i\frac{k}{r+k}\sum_{i=1}^{r}v_i\right)
\frac{\prod_{i<j}\left(2\sin\,\pi \frac{v_i-v_j}{r+k}\right)^2}{k(r+k)^{r-1}}\\&
\sum_{\mu}S_{\mu}\left({\rm exp}\,2\pi i\frac{\vec v}{r+k}\right)\cdot S_{\mu^*}\left({\rm exp}\,2\pi i\frac{\vec v}{r+k}\right).\endaligned$$
Then the formula \eqref{4.11} holds by the identity \eqref{4.2} in Proposition \ref{prop4.1}.

\end{proof}

\begin{lem}\label{lem4.5} If the formula \eqref{4.11} for $D_0(r,d,\omega)$ holds when $|I|\le3$, then it holds for all $D_0(r,d,\omega)$.
\end{lem}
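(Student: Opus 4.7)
The plan is to induct on $|I|$. The base case $|I|\le 3$ is the hypothesis, so suppose $|I|\ge 4$ and that \eqref{4.11} holds in genus zero for every parabolic datum with strictly fewer parabolic points. Apply the splitting recurrence \eqref{3.9} of Theorem \ref{thm3.12} with $g_1=g_2=0$ and a partition $I=I_1\cup I_2$ satisfying $|I_1|=2$, so that $|I_2|=|I|-2\ge 2$. Then $\omega_1^\lambda$ has $|I_1|+1=3$ parabolic points, so the formula holds for $D_0(r,0,\omega_1^\lambda)$ by the hypothesis of the lemma, while $\omega_2^\lambda$ has $|I_2|+1=|I|-1<|I|$ parabolic points, so the formula holds for $D_0(r,d,\omega_2^\lambda)$ by the inductive hypothesis.

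It then remains to verify the purely combinatorial identity
\[
V_0(r,d,\omega)\;=\;\sum_{\lambda\in W'_k}V_0(r,0,\omega_1^\lambda)\cdot V_0(r,d,\omega_2^\lambda),
\]
where $V_0$ denotes the right-hand side of \eqref{4.11}. Substituting \eqref{4.11} into each factor on the right produces a triple sum over $\lambda$ and two frequency vectors $\vec v,\vec{v}'$; the only $\lambda$-dependent quantities are the Schur polynomials $S_{\lambda_{x_1}}(\exp(2\pi i\vec v/(r+k)))$ and $S_{\lambda_{x_2}}(\exp(2\pi i\vec{v}'/(r+k)))$, together with the phases involving $|\lambda_{x_1}|\cdot|\vec v|$ and $|\lambda_{x_2}|\cdot|\vec{v}'|$. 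By Notation \ref{nota3.5} and the shift equivalence $\lambda_x\sim\lambda_x+(a,\ldots,a)$ already invoked in the proof of Lemma \ref{lem4.4}, one may take $\lambda_{x_1}=\lambda$ and $\lambda_{x_2}=\lambda^*$, so the inner sum over $\lambda$ is exactly of the form treated in Proposition \ref{prop4.1}. Identity \eqref{4.4} annihilates the off-diagonal terms $\vec v\nsim\vec{v}'$, and on the diagonal $\vec v\sim\vec{v}'$ identity \eqref{4.3} supplies precisely the factor $r(r+k)^{r-1}\prod_{i<j}(2\sin\pi(v_i-v_j)/(r+k))^{2}$ needed to combine the two $V_0$-prefactors $(r(r+k)^{r-1})^{-1}\prod_{i<j}(2\sin\pi(v_i-v_j)/(r+k))^{-2}$ on the right into the single such prefactor on the left, while the phase $\exp(2\pi ik|\vec v|/(r+k))$ from \eqref{4.3} combines via $|\omega_1^\lambda|+|\omega_2^\lambda|=|\omega|+kr$ (since $|\lambda|+|\lambda^*|=kr$) with the two $V_0$-phases to reproduce the $|\omega|$-phase of $V_0(r,d,\omega)$.

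The hard part is reconciling the modular restriction $\lambda\in W'_k$ in the algebro-geometric recurrence \eqref{3.9} with the unrestricted sum over $W_k$ appearing in the analytic identities \eqref{4.3} and \eqref{4.4}; one must verify that, after accounting for the shifts $\lambda\sim\lambda+(a,\ldots,a)$ used to identify $\lambda_{x_1},\lambda_{x_2}$ with $\lambda,\lambda^*$, the contributions from $W_k\setminus W'_k$ either vanish or cancel against the appropriate prefactors, so that the $W'_k$-sum on the geometric side matches the $W_k$-sum on the analytic side. The remaining bookkeeping is routine: the sign $(-1)^{d(r-1)}$ on the left arises entirely from $V_0(r,d,\omega_2^\lambda)$ on the right (as $V_0(r,0,\omega_1^\lambda)$ contributes $(-1)^0=1$), and the prefactor $(k/r)^g$ trivializes at $g=0$ on both sides.
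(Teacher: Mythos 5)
Your overall strategy coincides with the paper's: induct on $|I|$, split off two points via the recurrence \eqref{3.9} with $|I_1|=2$ so that $\omega_1^{\lambda}$ has three parabolic points (covered by the hypothesis) and $\omega_2^{\lambda}$ has $|I|-1$ (covered by induction), identify $\lambda_{x_1}\sim\lambda$ and $\lambda_{x_2}=\lambda^*$ up to the shift equivalence, and then invoke \eqref{4.3} and \eqref{4.4} of Proposition \ref{prop4.1}. The bookkeeping you describe for the phases, the sine prefactors, and the sign $(-1)^{d(r-1)}$ is the same as in the paper.

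However, the step you explicitly defer --- reconciling the sum over $W'_k$ in \eqref{3.9} with the sum over $W_k$ in \eqref{4.3} and \eqref{4.4} --- is left unproved, and it is the only genuinely nontrivial point of the lemma. You even suggest the wrong mechanism: there is no cancellation ``against the appropriate prefactors.'' The resolution is termwise: $V_0(r,0,\omega_1^{\lambda})=0$ for every $\lambda\in W_k\setminus W'_k$, so the restricted sum equals the unrestricted one outright. This is because the congruence defining $W'_k$, namely $\sum_{x\in I_1}\sum_i d_i(x)r_i(x)+|\lambda|\equiv 0\ (\mathrm{mod}\ r)$, is (using $\sum_i d_i(x)r_i(x)\equiv|\lambda_x|\ (\mathrm{mod}\ r)$) exactly the integrality condition \eqref{3.1} for the degree-zero datum $\omega_1^{\lambda}$, i.e.\ $r\mid|\omega_1^{\lambda}|$, and the Verlinde sum $V_0(r,0,\cdot)$ vanishes when this fails (this is the convention/claim the paper records just before Lemma \ref{lem4.7}). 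Until you supply this vanishing, your argument does not close. A second, smaller omission: to apply \eqref{4.4} you need ``off-diagonal'' to mean $\vec v\neq\vec v'$, which requires the observation that $\vec v\sim\vec v'$ if and only if $\vec v=\vec v'$ on the chosen fundamental domain $v_r=v'_r=0$; the paper states this explicitly and you should too.
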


\begin{proof} The proof is by induction on the number of parabolic points. By Theorem \ref{thm3.12}, let $I=I_1\cup I_2$ with $|I_1|=2$, we have
$$D_0(r,d,\omega)=\sum_{\mu\in W'_k}V_0(r,0,\omega_1^{\mu})\cdot V_0(r,d,\omega_2^{\mu}).$$
It is not difficult to check that $V_0(r,0,\omega_1^{\mu})=0$ for $\mu\in W_k\setminus W'_k$. Thus
$$\aligned &D_0(r,d,\omega)=\sum_{\mu\in W_k}V_0(r,0,\omega_1^{\mu})\cdot V_0(r,d,\omega_2^{\mu})=\frac{(-1)^{d(r-1)}}{(r(r+k)^{r-1})^2}\\&\sum_{\vec v,\,\vec {v'}}
\frac{{\rm exp}\left(2\pi i\left(-\frac{|\omega_1|}{r(r+k)}\right)|\vec v|\right)} {\prod_{i<j}\left(2\sin\,\pi \frac{v_i-v_j}{r+k}\right)^{-2}}
\cdot\frac{{\rm exp}\left(2\pi i\left(\frac{d}{r}-\frac{|\omega_2|}{r(r+k)}\right)|\vec {v'}|\right)} {\prod_{i<j}\left(2\sin\,\pi \frac{v'_i-v'_j}{r+k}\right)^{-2}}\cdot\\&
S_{\omega_1}\left({\rm exp}\,2\pi i\frac{\vec v}{r+k}\right)\cdot S_{\omega_2}\left({\rm exp}\,2\pi i\frac{\vec {v'}}{r+k}\right)\cdot\sum_{\mu\in W_k}
{\rm exp}\,2\pi i\frac{-|\mu|\cdot |\vec v|}{r(r+k)}\\&
{\rm exp}\,2\pi i\frac{-|\mu^*|\cdot |\vec {v'}|}{r(r+k)}\cdot S_{\mu}\left({\rm exp}\,2\pi i\frac{\vec v}{r+k}\right)\cdot S_{\mu^*}\left({\rm exp}\,2\pi i\frac{\vec {v'}}{r+k}\right)
\endaligned$$ and we are done by \eqref{4.3} and \eqref{4.4} of Proposition \ref{prop4.1}. Here we remark that $\vec v\nsim\vec{v'}$ if and only if $\vec v\neq\vec{v'}$
since our $\vec v$, $\vec{v'}$ satisfy $v_r=v'_r=0$.
\end{proof}

Finally, we prove $D_0(r,d,\omega)=V_0(r,d,\omega)$ when $|I|\le 3$. The
data $$\omega=(k,\{\vec n(x),\vec a(x)\}_{x\in I})$$
is encoded in partitions $\omega=\{\lambda_x\}_{x\in I}$ (See Notation \ref{nota4.2}) where
$$\lambda_x=(\lambda_1(x),\lambda_2(x),...,\lambda_r(x)),\quad k\ge\lambda_1(x)\ge\lambda_2(x)\ge\cdots\ge \lambda_r(x)\ge 0.$$
Thus, for convenience of computations, we will use notations
$$D_g(r,d,\{\lambda_x\}_{x\in I}):=D_g(r,d,\omega), \quad V_g(r,d,\{\lambda_x\}_{x\in I}):=V_g(r,d,\omega),$$
$$\omega_s:=(\,\overbrace{1,\ldots,1}^{s}\,,\,\overbrace{0,\ldots,0}^{r-s}\,)\quad (1\le s\le r).$$
Let $V$ be standard representation of ${\rm GL}_r(\mathbb{C})$ and $1\le s\le r-1$, then
\ga{4.14} {\mathbb{S}_{\lambda}(V)\otimes \mathbb{S}_{\omega_s}(V)=\bigoplus_{\mu\in\Y(\lambda,\,\omega_s)}\mathbb{S}_{\mu}(V)}
where the Young diagrams of partitions $\mu\in Y(\lambda,\,\omega_s)$ are obtained from $\lambda$ by adding $s$ boxes with no two in the same row
(See (6.9) at page 79 of \cite{FH}). \textbf{In the rest of the article, without loss of generality, we assume $\lambda_r(x)=0$ for all partitions $\lambda_x$ ($x\in I$)}.
We compute $V_0(r,0,\{\lambda_x\}_{x\in I})$ firstly for special partitions.

\begin{lem}\label{lem4.6} (1) When $|I|=0$, $V_0(r,0,\{\lambda_x\}_{x\in I})=1$;

(2) $V_0(r,0,\lambda_x)=1$ if $\lambda_x=0$ and zero otherwise;

(3) $V_0(r,0,\{\lambda_x,\lambda_y\})=1$ if $\lambda_x\sim\lambda_y^{\ast}$ and zero otherwise;

(4) Let $Y(\lambda_y,\omega_s)$ be the set defined in \eqref{4.14}. Then
$$V_0(r,0,\{\omega_s,\lambda_y,\lambda_z\})=\left\{
\begin{array}{llll} 1 &\mbox{when $\lambda^*_z\sim\mu \in Y(\lambda_y,\omega_s)$}\\
0&\mbox{when
$\lambda^*_z\nsim\mu\in Y(\lambda_y,\omega_s)$.}\end{array}\right.$$
Note that, for any $\mu,\,\mu'\in Y(\lambda_y,\omega_s)$, $\mu\sim\mu'\,\Leftrightarrow\,\mu=\mu'$.
\end{lem}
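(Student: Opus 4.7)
The plan is to read each value off formula \eqref{4.11} with $g=0$ and $d=0$, using the orthogonality identities of Proposition \ref{prop4.1}. Following the notation already introduced in the proof of \eqref{4.4}, set
$$G_{\lambda}(\vec v):=\exp\!\Bigl(-2\pi i\,\tfrac{|\lambda|\,|\vec v|}{r(r+k)}\Bigr)\,S_{\lambda}\!\Bigl(\exp 2\pi i\,\tfrac{\vec v}{r+k}\Bigr),\qquad |\vec v|=\textstyle\sum_i v_i,$$
so that \eqref{4.11} specialises to
$$V_0(r,0,\{\lambda_x\}_{x\in I})=\frac{1}{r(r+k)^{r-1}}\sum_{\vec v}\Bigl(\prod_{x\in I}G_{\lambda_x}(\vec v)\Bigr)\prod_{i<j}\Bigl(2\sin\pi\tfrac{v_i-v_j}{r+k}\Bigr)^{2}.$$
Two features of $G_\lambda$ drive everything. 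First, $G_\lambda$ depends only on the $\sim$-class of $\lambda$: the phase prefactor exactly cancels the factor $(z_1\cdots z_r)^a$ that $S_{\lambda+(a,\dots,a)}$ picks up. Second, the identity $S_{\lambda^{*}}(e^{2\pi i\vec v/(r+k)})=e^{2\pi ik|\vec v|/(r+k)}\overline{S_{\lambda}(e^{2\pi i\vec v/(r+k)})}$ used in Proposition \ref{prop4.1}, together with $|\lambda^{*}|=rk-|\lambda|$, gives $G_{\lambda^{*}}(\vec v)=\overline{G_{\lambda}(\vec v)}$.

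The main tool I would establish first is the \emph{dual orthogonality}
$$\langle G_\mu,G_\nu\rangle:=\frac{1}{r(r+k)^{r-1}}\sum_{\vec v}G_\mu(\vec v)\,\overline{G_\nu(\vec v)}\prod_{i<j}\Bigl(2\sin\pi\tfrac{v_i-v_j}{r+k}\Bigr)^{2}=\begin{cases}1,&\mu\sim\nu,\\0,&\mu\not\sim\nu,\end{cases}$$
valid for partitions $\mu,\nu$ whose $\sim$-class has a representative in $W_k$. This is an immediate consequence of Proposition \ref{prop4.1}: the matrix $M_{\mu,\vec v}:=G_\mu(\vec v)\cdot\prod_{i<j}(2\sin\pi\tfrac{v_i-v_j}{r+k})$ satisfies $M^{*}M=r(r+k)^{r-1}\cdot\mathrm{Id}$ by combining \eqref{4.3} (diagonal entries) with \eqref{4.4} (off-diagonal vanishing); since the two index sets have the same cardinality $\binom{r-1+k}{r-1}$ (via the bijection $w_i:=v_i-(r-i)$ between valid $\vec v$'s and $W_k$), $M$ is square and therefore $MM^{*}=r(r+k)^{r-1}\cdot\mathrm{Id}$ as well, which is exactly the displayed identity. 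One may alternatively repeat the $J(e^{\vec v})/T_k$-character argument of the proof of \eqref{4.4} with the roles of $\mu$ and $\vec v$ interchanged.

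With this in hand the four parts follow mechanically. Part (1) is $V_0(r,0,\emptyset)=\langle G_0,G_0\rangle=1$. Part (2) is $V_0(r,0,\lambda_x)=\langle G_{\lambda_x},G_0\rangle$, equal to $1$ iff $\lambda_x\sim 0$, i.e., $\lambda_x=0$ under the normalisation $\lambda_r(x)=0$. For part (3), rewrite $G_{\lambda_y}=\overline{G_{\lambda_y^{*}}}$ to obtain $V_0(r,0,\{\lambda_x,\lambda_y\})=\langle G_{\lambda_x},G_{\lambda_y^{*}}\rangle$, which is $1$ iff $\lambda_x\sim\lambda_y^{*}$. For part (4), the Pieri rule \eqref{4.14} gives $S_{\omega_s}S_{\lambda_y}=\sum_{\mu\in Y(\lambda_y,\omega_s)}S_\mu$, and since $|\mu|=|\lambda_y|+s=|\lambda_y|+|\omega_s|$ for every such $\mu$, the phases combine into $G_{\omega_s}G_{\lambda_y}=\sum_{\mu\in Y(\lambda_y,\omega_s)}G_\mu$. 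Hence $V_0(r,0,\{\omega_s,\lambda_y,\lambda_z\})=\sum_{\mu}\langle G_\mu,G_{\lambda_z^{*}}\rangle=\#\{\mu\in Y(\lambda_y,\omega_s):\mu\sim\lambda_z^{*}\}$, which is $0$ or $1$; the concluding remark $\mu\sim\mu'\Rightarrow\mu=\mu'$ inside $Y(\lambda_y,\omega_s)$ is immediate from $|\mu|=|\mu'|$ together with the fact that $\sim$ shifts the weight by $ra$.

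The principal obstacle is establishing the dual orthogonality cleanly; a secondary difficulty in part (4) is that an element $\mu\in Y(\lambda_y,\omega_s)$ need not lie in $W_k$: we may have $\mu_r=1$ (box added to the bottom row) or $\mu_1=\lambda_y(1)+1=k+1$. The first case is harmless because $\mu\sim\tilde\mu:=\mu-(1,\dots,1)\in W_k$ and $G_\mu=G_{\tilde\mu}$. In the second case, $\mu_r=0$ forces $(\mu+\rho)_1=k+r\equiv 0=(\mu+\rho)_r\pmod{r+k}$, so $t_\mu$ has two equal coordinates and antisymmetry of $J(e^{\vec v})$ gives $G_\mu\equiv 0$; such bad terms simply drop out of the Pieri expansion and leave the subsequent orthogonality argument unaffected.
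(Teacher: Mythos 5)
Your proof is correct, and for parts (1)--(3) it takes a genuinely different route from the paper's. The paper never forms the ``dual orthogonality'' $\sum_{\vec v}G_\mu(\vec v)\overline{G_\nu(\vec v)}\prod_{i<j}(2\sin\pi\tfrac{v_i-v_j}{r+k})^2=r(r+k)^{r-1}\delta_{\mu\sim\nu}$ as an abstract consequence of unitarity of a square matrix; instead it proves (1) by a direct character sum over the torus $T'_k$ (computing $\sum_{\vec v}\Delta(\vec v)\overline{\Delta(\vec v)}=r(r+k)^{r-1}$), and for (3) it invokes the transposition symmetry of the $S$-matrix, writing $\vec v=\rho+\mu$ and using $S_{\lambda}(\exp 2\pi i\tfrac{\vec v}{r+k})=\tfrac{\Delta(\vec\lambda+\vec\rho)}{\Delta(\vec v)}S_{\mu}(\exp 2\pi i\tfrac{\vec\lambda+\vec\rho}{r+k})$ to convert the sum over $\vec v$ into a sum over $\mu\in W_k$ with arguments $\vec\lambda_x+\vec\rho$ and $\vec\lambda_y^*+\vec\rho$, to which \eqref{4.3} and \eqref{4.4} apply verbatim; it also derives (2) from (3) by taking $\lambda_y=0$ rather than pairing against $G_0$. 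Your $M^*M=cI\Rightarrow MM^*=cI$ argument buys a cleaner, more uniform treatment of (1)--(3) at the modest cost of the cardinality count $|W_k|=\#\{\vec v\}=\binom{r+k-1}{r-1}$ (which you supply via $w_i=v_i-(r-i)$), while the paper's route stays entirely inside explicit symmetric-function identities. Part (4) is essentially identical in both treatments (Pieri rule plus $|\mu|=|\lambda_y|+s$, reducing to (3)); your extra care about the elements $\mu\in Y(\lambda_y,\omega_s)$ falling outside $W_k$ --- shifting when $\mu_r=1$, and noting $G_\mu\equiv 0$ when $\mu_1=k+1$, $\mu_r=0$ because $t_\mu$ then has two equal coordinates --- is a point the paper glosses over, and your observation that such $\mu$ can never be $\sim\lambda_z^*$ keeps the count consistent with the statement.
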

\begin{proof} (1) When $|I|=0$ and $r|d$, recall from \eqref{4.11}, we have
$$V_0(r,d,\{\lambda_x\}_{x\in I})=\frac{1}{r(r+k)^{r-1}}\sum_{\vec v}\Delta(\vec v)\overline{\Delta(\vec v)}$$
where $\vec v=(v_1,v_2,\ldots,v_r)$ runs through the integers $$0=v_r<v_{r-1}<\cdots <v_2<v_1<r+k.$$
Let $\rho=(\rho_1,...,\rho_r)=(r-1,...,1,0)$. By expansion of determinant,
$$\Delta(\vec v)=\sum_{\tau\in \mathfrak{S}_r}\epsilon(\tau)e^{2\pi i\frac{v_1}{r+k}\rho_{\tau(1)}}
\cdot\cdots\cdot e^{2\pi i\frac{v_r}{r+k}\rho_{\tau(r)}}=J(e^{\vec{\rho}})({\rm exp}\,2\pi i\frac{\vec v}{r+k}).$$
Then the same computations in the proof of Proposition \ref{prop4.1} imply that
$$\aligned&\sum_{\vec v}\Delta(\vec v)\overline{\Delta(\vec v)}=\frac{1}{|\mathfrak{S}_{r-1}|}\sum_{t\in T'_k}J(e^{\vec{\rho}})(t)\overline{J(e^{\vec{\rho}})(t)}\\&
=\frac{1}{|\mathfrak{S}_{r-1}|}\sum_{\tau,\sigma\in \mathfrak{S}_r}\epsilon(\tau)\epsilon(\sigma)\sum_{t\in T'_k}e^{\tau(\vec\rho)}(t)\cdot\overline{e^{\sigma(\vec\rho)}(t)}
=r(r+k)^{r-1}\endaligned$$
where $T'_k=\{\,t=\mathrm{diag}(e^{\frac{2\pi i}{r+k}t_1}, \cdots, e^{\frac{2\pi i}{r+k}t_{r-1}},1)\,|\, 0\le t_i<r+k\,\}$.

To prove (2) and (3), we note that (3) implies (2) since $$V_0(r,0,\{\lambda_x\})=V_0(r,0,\{\lambda_x,\lambda_y\})$$
when $\lambda_y=0$. Thus it is enough to show that
$$\sum_{\vec v}
\frac{{\rm exp}\left(2\pi i\left(-\frac{|\lambda_x|+|\lambda_y|}{r(r+k)}\right)\sum_{i=1}^{r}v_i\right)S_{\{\lambda_x,\lambda_y\}}\left({\rm exp}\,2\pi i\frac{\vec v}{r+k}\right)} {\prod_{i<j}\left(2\sin\,\pi \frac{v_i-v_j}{r+k}\right)^{-2}}=r(r+k)^{r-1}$$
when $\lambda_x\sim\lambda_y^{\ast}$ and zero otherwise. Write $\vec v=\rho+\mu$ ($\mu\in W_k$), then
$$S_{\lambda_x}\left({\rm exp}\,2\pi i\frac{\vec v}{r+k}\right)=\frac{\Delta(\vec\lambda_x+\vec\rho)}{\Delta(\vec v)}S_{\mu}\left(\mathrm{exp}2\pi i\frac{\vec\lambda_x+\vec\rho}{r+k} \right)$$
$$S_{\lambda^*_y}\left({\rm exp}\,2\pi i\frac{\vec v}{r+k}\right)=\frac{\Delta(\vec\lambda^*_y+\vec\rho)}{\Delta(\vec v)}S_{\mu}\left(\mathrm{exp}2\pi i\frac{\vec\lambda^*_y+\vec\rho}{r+k} \right)$$
$$S_{\lambda_y}\left(\mathrm{exp}2\pi i\frac{\vec v}{r+k} \right)=\overline{S_{\lambda_y^{\ast}}\left(\mathrm{exp}2\pi i\frac{\vec v}{r+k} \right)}\cdot \mathrm{exp}\left(2\pi i\frac{k}{r+k}\sum_{i=1}^r v_i \right).$$
Thus we have
$$\aligned &\sum_{\vec v}
\frac{e^{2\pi i\left(-\frac{|\lambda_x|+|\lambda_y|}{r(r+k)}\right)\sum_{i=1}^{r}v_i}S_{\{\lambda_x,\lambda_y\}}\left({\rm exp}\,2\pi i\frac{\vec v}{r+k}\right)} {\prod_{i<j}\left(2\sin\,\pi \frac{v_i-v_j}{r+k}\right)^{-2}}=\frac{\Delta(\vec\lambda_x+\vec\rho)\overline{\Delta(\vec\lambda_y^{\ast}+\vec\rho)}}{\mathrm{exp}\left(2\pi i\frac{k}{r+k}|\lambda_y^{\ast}+\rho| \right)}\\&
\sum_{\mu\in W_k}e^{2\pi i\left(\frac{|\lambda^*_y|-|\lambda_x|}{r(r+k)}\right)|\mu+\rho|} S_{\mu}\left(\mathrm{exp}2\pi i\frac{\vec\lambda_x+\vec\rho}{r+k} \right)\cdot
S_{\mu^*}\left(\mathrm{exp}2\pi i\frac{\vec\lambda_y^{\ast}+\vec\rho}{r+k} \right),
\endaligned$$
which equals $1$ when $\lambda_x\sim\lambda_y^{\ast}$ by \eqref{4.3} of Proposition \ref{prop4.1} and otherwise zero if $\lambda_x\nsim\lambda_y^*$ by \eqref{4.4} of Proposition \ref{prop4.1}.

In order to prove (4), by using of \eqref{4.14}, we have
$$S_{\omega_s}\left(\mathrm{exp}2\pi i\frac{\vec v}{r+k} \right)\cdot S_{\lambda_y}\left(\mathrm{exp}2\pi i\frac{\vec v}{r+k} \right)=\sum_{\mu\in Y(\lambda_y,\omega_s)}S_{\mu}\left(\mathrm{exp}2\pi i\frac{\vec v}{r+k} \right).$$
Since $|\mu|=|\lambda_y|+|\omega_s|$ for any $\mu\in Y(\lambda_y,\omega_s)$, we have
 $$\aligned &V_{0}(r,0,\{\omega_s,\lambda_y,\lambda_z\})\\&=\frac{1}{r(r+k)}
\sum_{\vec v}\frac{\mathrm{exp}\left(-2\pi i\frac{|\omega_s|+|\lambda_y|+|\lambda_z|}{r(r+k)}\right)S_{\{\omega_s,\lambda_y,\lambda_z\}}\left(\mathrm{exp}2\pi i\frac{\vec v}{r+k}\right)}{\prod_{i<j}(2sin \pi\frac{v_i-v_j}{r+k})^{-2}}\\&
=\frac{1}{r(r+k)}\sum_{\vec v}\sum_{\mu\in Y(\lambda_y,\omega_s)}\frac{\mathrm{exp}\left(-2\pi i\frac{|\mu|+|\lambda_z|}{r(r+k)}\right)S_{\{\mu,\lambda_z\}}\left( \mathrm{exp}2\pi i\frac{\vec v}{r+k}\right)}{\prod_{i<j}(2sin \pi\frac{v_i-v_j}{r+k})^{-2}}
\\&
=\sum_{\mu\in Y(\lambda_y,\omega_s)}V_0(r,0,\{\mu, \lambda_z\}),\endaligned$$
which and (3) imply (4).
\end{proof}

Let $\mathcal{U}_{\mathbb{P}^1}(r, 0,\{\lambda_x\}_{x\in I})$ be the moduli space of semi-stable parabolic bundles of rank $r$ and degree $0$ on $\mathbb{P}^1$ with parabolic structures given by $\{\lambda_x\}_{x\in I}$. Recall condition \eqref{3.1} (the necessary condition to define theta line bundle on $\mathcal{U}_{\mathbb{P}^1}(r, 0,\{\lambda_x\}_{x\in I})$): $\frac{\sum_{x\in I}|\lambda_x|}{r}\in \mathbb{Z}$. We will assume this condition (in case it is needed), otherwise $V_0(r, 0,\{\lambda_x\}_{x\in I})=0$ and we can define $D_0(r, 0,\{\lambda_x\}_{x\in I})=0$.

\begin{lem}\label{lem4.7} (1) When $|I|=0$, $\mathcal{U}_{\mathbb{P}^1}(r, 0,\{\lambda_x\}_{x\in I})$ consists one point;

(2) When $|I|=1$, $\mathcal{U}_{\mathbb{P}^1}(r, 0,\lambda_x)$ consists one point if $\lambda_x=0$ and is empty otherwise;

(3) When $|I|=2$, $\mathcal{U}_{\mathbb{P}^1}(r, 0,\{\lambda_x,\lambda_y\})$ consists one point if $\lambda_x\sim\lambda_y^*$ and is empty otherwise;

(4) When $|I|=3$, $\mathcal{U}_{\mathbb{P}^1}(r, 0,\{\omega_s,\lambda_y,\lambda_z\})$ ($1\le s\le r-1$) consists one point if $\lambda_z^{\ast}\sim\mu \in Y(\lambda_y,\omega_s)$ and is empty otherwise.
\end{lem}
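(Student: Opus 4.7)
The plan is to exploit Grothendieck's splitting theorem $E\cong\bigoplus_{i=1}^r\sO(a_i)$ on $\P^1$ together with parabolic semistability, which in the regime $|I|\le 3$ forces the splitting type of $E$ to be rigid and the flag data to have at most one semistable $GL_r$-orbit. Cases (1)--(3) then follow by direct case analysis: in (1), slope semistability alone forces $a_1=\cdots=a_r=0$, so $E=\sO^r$ uniquely; in (2) the same argument still shows $E=\sO^r$, and then testing parabolic semistability against trivial sub-bundles of $\sO^r$ adapted to the flag at $x$ shows the parabolic structure is itself destabilizing unless $\lambda_x=0$; in (3), once $E=\sO^r$, the moduli becomes the quotient of (flag of type $\lambda_x$)$\,\times\,$(flag of type $\lambda_y$) by the diagonal $\mathrm{Aut}(\sO^r)=GL_r$, whose orbits are classified by relative position, and parabolic semistability singles out the unique ``opposite position'' orbit, which is nonempty exactly when $\lambda_y\sim\lambda_x^{\ast}$.

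Case (4) is the main content. The weight $\omega_s$ at $x$ corresponds to a one-step flag $F\subset E_x$ of dimension $s$, i.e.\ a point of the Grassmannian $Gr(s,r)$. My approach is to apply a Hecke transformation at $x$ (Definition \ref{defn3.7} and Lemma \ref{lem3.9}) which exchanges the $\omega_s$-parabolic for a parabolic of complementary shape at $x$, at the cost of shifting the degree from $0$ to $-s$ and potentially changing the splitting of the bundle. By Grothendieck plus the semistability constraint, the transformed bundle is forced to have splitting type $\sO^{r-s}\oplus\sO(-1)^{s}$; the precise embedding of the $\sO(-1)^s$ summand, viewed through the flag at $y$, redistributes the $y$-weights in a way that is combinatorially identical to Pieri's rule for adding $s$ boxes in distinct rows of $\lambda_y$. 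For each such Pieri placement, case (3) applied to the residual two-flag moduli at $y,z$ then returns a point iff $\lambda_z^{\ast}\sim\mu$ for the resulting diagram $\mu\in Y(\lambda_y,\omega_s)$, exactly matching the claim.

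The principal obstacle is making the combinatorial step in case (4) rigorous, namely parametrising Hecke-transformed semistable parabolic bundles by $Y(\lambda_y,\omega_s)$. This requires a careful parabolic Euler characteristic computation on every candidate splitting of the Hecke-transformed bundle, and a sub-bundle by sub-bundle analysis showing that exactly the Pieri placements survive parabolic semistability while every other splitting is destabilised by a sub-bundle adapted to the new flag at $x$ and the flag at $y$. The boundary character of $\omega_s$ (where $a_{l_x+1}=k$ in the strict convention) and the normalization $\lambda_r(x)=0$ also demand delicate bookkeeping, appealing to Remark \ref{rmks3.11} to interpret moduli and theta line bundles at this boundary of the parameter space.
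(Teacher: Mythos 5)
Your cases (1) and (2) are fine and agree with the paper's argument (degree-zero semistability on $\P^1$ forces $E\cong\sO_{\P^1}^{\oplus r}$, and a degree-zero line subbundle adapted to the flag destabilizes any nontrivial one-point structure). But the heart of the lemma is case (4), and there your proposal has a genuine gap at exactly the step you yourself flag as ``the principal obstacle'': the reduction via a Hecke transformation at $x$ does not go through as stated. A single Hecke transformation along the $s$-dimensional quotient at $x$ replaces the weight $\omega_s$ (type $(s,r-s)$, weights $(k-1,k)$) by the complementary type $(r-s,s)$ with weights $(0,k-1)$; it does \emph{not} trivialize the parabolic structure at $x$, so there is no ``residual two-flag moduli at $y,z$'' to which case (3) could be applied. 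And even granting the splitting type $\sO^{r-s}\oplus\sO(-1)^{s}$ of the transformed bundle (which does follow from $h^0=r-s$), the identification of the semistable flag configurations on this nontrivial bundle with the Pieri set $Y(\lambda_y,\omega_s)$ --- which is the entire content of the statement --- is left as a program rather than carried out. The same criticism applies, less severely, to the emptiness direction of case (3): ``semistability singles out the unique opposite-position orbit'' is precisely what must be proved, and the paper devotes a substantial argument to it (the claim involving $d_x(m)$, $\bar d_y(m')$ and the equalities \eqref{4.16}--\eqref{4.17}).

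For comparison, the paper never leaves the trivial bundle: it works inside $\sR^0$ (parabolic structures on $\sO_{\P^1}^{\oplus r}$) and argues by induction on the rank, splitting off a parabolic line subbundle $\sL=\sO_{\P^1}$ with $n^{\sL}_{l_z+1}(z)=1$, $n^{\sL}_1(y)=1$, and weight $k-1$ or $k$ at $x$ according to whether $\mu_1=k-a_1(y)+1$ or $\mu_1=k-a_1(y)$; the equality ${\rm par}\mu(\sL)={\rm par}\mu(E)$ in exactly these two subcases is where the Pieri rule (at most one added box per row) actually enters, and the induction simultaneously exhibits the unique $s$-equivalence class and proves emptiness otherwise. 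To salvage your route you would need to (i) carry the induced two-step flag at $x$ through the Hecke transform and (ii) prove, not assert, the bijection between semistable configurations and $Y(\lambda_y,\omega_s)$ --- at which point you would essentially be redoing the paper's subbundle-by-subbundle analysis in a less convenient presentation.
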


\begin{proof} (1) is clear. For other statements, recall (Notation \ref{nota4.2}) the parabolic structure of $E$ at $x\in I$ determined by $\lambda_x$ is given by a flag
$$E_x=Q_{l_x+1}(E)_x\twoheadrightarrow
Q_{l_x}(E)_x\twoheadrightarrow\cdots\cdots\twoheadrightarrow
Q_1(E)_x\twoheadrightarrow Q_0(E)_x=0$$
and $0\leq a_1(x)<a_2(x)<\cdots<a_{l_x+1}(x)\le k$ (here we have to include the case of $a_{l_x+1}(x)- a_1(x)=k$ since Hecke modifications) such that
$$\lambda_x=(\,\overbrace{k-a_1(x),\ldots,k-a_1(x)}^{n_1(x)}\,\ldots,\, \overbrace{k-a_{l_x+1}(x),\ldots,k-a_{l_x+1}(x)}^{n_{l_x+1}(x)}\,)$$
where $n_i(x)=r_i(x)-r_{i-1}(x)$ ($1\le i\le l_x+1$) and $r_i(x)={\rm dim}\,Q_i(E)_x$.
For any subsheaf $F\subset E$, let $Q_i(E)_x^F\subset Q_i(E)_x$ be the image of $F_x$ and $n_i^F(x)=r^F_i(x)-r^F_{i-1}(x)$, $r^F_i(x)={\rm dim}\,Q_i(E)_x^F$. Let
$$\aligned &{\rm par}{\rm deg}(E):={\rm deg}(E)+\frac{1}{k}\sum_{x\in
I}\sum^{l_x+1}_{i=1}a_i(x)n_i(x),\\&{\rm par}{\rm deg}(F):={\rm deg}(F)+\frac{1}{k}\sum_{x\in
I}\sum^{l_x+1}_{i=1}a_i(x)n^F_i(x).\endaligned$$
Then $E$ is called semistable (resp., stable) for $\omega=(k, \{\vec n(x),\,\,\vec a(x)\}_{x\in I})$ if for any
nontrivial sub-bundle $E'\subset E$,
one has
$${\rm par}\mu(E'):=\frac{{\rm par}{\rm deg}(E')}{r(E')}\leq
\frac{{\rm par}{\rm deg}(E)}{r}:={\rm par}\mu(E)\,\,(\text{resp., }<).$$

To show (2), when $\lambda_x$ is nontrivial (i.e., $0<n_1(x)<r$), we have
$${\rm par}\mu(E)\le \frac{1}{r}\sum^{l_x+1}_{i=1}\frac{a_{l_x+1}(x)-(l_x+1-i)}{k}n_i(x)<\frac{a_{l_x+1}(x)}{k}.$$
Thus any semistable parabolic bundle must have $E=\sO_{\mathbb{P}^1}^{\oplus r}$ and the evaluation map
${\rm H}^0(\mathbb{P}^1,E)\to E_x$ is an isomorphism. Then there is a line bundle $F\subset E$ of degree zero such that
$n^F_{l_x+1}(x)=1$, which implies
$$ {\rm par}\mu(F)=\frac{a_{l_x+1}(x)}{k}>{\rm par}\mu(E)$$
and $\mathcal{U}_{\mathbb{P}^1}(r, 0,\lambda_x)$ is empty.

To prove (3), we consider firstly the case of $\lambda_x\sim\lambda_y^*$, then
$$l_y=l_x, \quad n_i(y)=n_{l_x-i+2}(x),\quad a_i(y)+a_{l_x-i+2}(x)=a$$
and ${\rm par}\mu(E)=\frac{a}{k}$. To show that $\mathcal{U}_{\mathbb{P}^1}(r, 0,\{\lambda_x,\lambda_y\})$ consists of one point, recall that it is a GIT quotient
$\psi: \sR^{ss}_{\{\lambda_x,\lambda_y\}} \to \mathcal{U}_{\mathbb{P}^1}(r, 0,\{\lambda_x,\lambda_y\})$ where $\sR^{ss}_{\{\lambda_x,\lambda_y\}}\subset\sR$
is an (maybe empty) open set of an irreducible quasi-projective variety $\sR$. Let $\sR^0\subset\sR$ be the (non-empty) open set of parabolic bundles $E$ with a trivial
underline bundle. We will show that $\sR^0\cap \sR^{ss}_{\{\lambda_x,\lambda_y\}}$ is non-empty and $\psi(\sR^0\cap \sR^{ss}_{\{\lambda_x,\lambda_y\}})$ is a point, which implies that $\mathcal{U}_{\mathbb{P}^1}(r, 0,\{\lambda_x,\lambda_y\})$ consists of one point. Clearly, the following semistable parabolic bundle
\ga{4.15}{\bigoplus^{l_y+1}_{i=1}(\sO_{\mathbb{P}^1},\,\{a_{l_x-i+2}(x),\,a_i(y)\})^{\oplus n_i(y)}}
defines a point of $\mathcal{U}_{\mathbb{P}^1}(r, 0,\{\lambda_x,\lambda_y\})$, where $(\sO_{\mathbb{P}^1},\,\{a_{l_x-i+2}(x),\,a_i(y)\})$ is a parabolic line bundle with weight $\{a_{l_x-i+2}(x),\,a_i(y)\}$. On the other hand, any
$E\in \sR^0\cap \sR^{ss}_{\{\lambda_x,\lambda_y\}}$ is $s$-equivalent to the parabolic bundle defined in \eqref{4.15}. In fact,
since ${\rm H}^0(\mathbb{P}^1,E)\to E_x$ is an isomorphism, there is a $\sL_1=\sO_{\mathbb{P}^1}\subset E$ such that $n^{\sL_1}_{l_x+1}(x)=1$. Then the semistability of
$E$ implies $n^{\sL_1}_1(y)=1$ and $\sL_1=(\sO_{\mathbb{P}^1},\,\{a_{l_x+1}(x),\,a_1(y)\})$ has
${\rm par}\mu(\sL_1)={\rm par}\mu(E)$. Let $E'=E/\sL_1$ be the quotient parabolic bundle, then $E$ is $s$-equivalent to $\sL_1\oplus E'$ where the parabolic structures of $E'$ are defined by partitions
$$\lambda'_x=(\,\overbrace{k-a_1(x),\ldots,k-a_1(x)}^{n_1(x)}\,\ldots,\, \overbrace{k-a_{l_x+1}(x),\ldots,k-a_{l_x+1}(x)}^{n_{l_x+1}(x)-1}\,),$$
$$\lambda'_y=(\,\overbrace{k-a_1(y),\ldots,k-a_1(y)}^{n_1(y)-1}\,\ldots,\, \overbrace{k-a_{l_y+1}(y),\ldots,k-a_{l_y+1}(y)}^{n_{l_y+1}(y)}\,).$$
Clearly, $\lambda_x'\sim\lambda_y^{'*}$ and, by induction of $r(E)$, $E'$ is $s$-equivalent to
$$ (\sO_{\mathbb{P}^1},\,\{a_{l_x+1}(x),\,a_1(y)\})^{\oplus (n_1(y)-1)}\oplus\bigoplus^{l_y+1}_{i=2}(\sO_{\mathbb{P}^1},\,\{a_{l_x-i+2}(x),\,a_i(y)\})^{\oplus n_i(y)},$$
thus $E$ is $s$-equivalent to the parabolic bundle defined in \eqref{4.15}.

To prove $\mathcal{U}_{\mathbb{P}^1}(r, 0,\{\lambda_x,\lambda_y\})$ is empty when $\lambda_x\nsim\lambda_y^*$, it is enough to prove
$\sR^0\cap \sR^{ss}_{\{\lambda_x,\lambda_y\}}$ is empty. Let $E\in \sR^0$, recall its flag at $x$ and $y$
$$E_x=Q_{l_x+1}(E)_x\twoheadrightarrow
Q_{l_x}(E)_x\twoheadrightarrow\cdots\cdots\twoheadrightarrow
Q_1(E)_x\twoheadrightarrow Q_0(E)_x=0,$$
$$E_y=Q_{l_y+1}(E)_y\twoheadrightarrow
Q_{l_y}(E)_y\twoheadrightarrow\cdots\cdots\twoheadrightarrow
Q_1(E)_y\twoheadrightarrow Q_0(E)_y=0$$ and $r_i(x):={\rm dim}(Q_i(E)_x)$, $r_i(y):={\rm dim}(Q_i(E)_y)$. For $1\leq m<r$, let
$m_x=\mathrm{max}\{i\mid r_i(x)\leq m \}$, $m_y=\mathrm{max}\{i\mid  r_i(y)\le m\}$ and
$$d_x(m):=\sum_{i=1}^{m_x}\frac{a_i(x)}{k}n_i(x)+(m-r_{m_x}(x))\frac{a_{m_x+1}(x)}{k},$$
$$\bar{d}_x(m):=\sum_{i=1}^{l_x+1}\frac{a_i(x)}{k}n_i(x)-d_x(m),\quad \text{$d_y(m)$, $\bar{d}_y(m)$ are defined similarly.}$$
\textbf{If $\lambda_x\nsim\lambda_y^*$,
we claim that there is an integer $1\le m<r$ such that either $\frac{d_x(m)+\bar{d}_y(m')}{m}>{\rm par}\mu(E)$ or $\frac{\bar{d}_x(m)+d_y(m')}{m'}>{\rm par}\mu(E)$
where $m'=r-m$}. If the claim is true, without loss of generality, we assume
$$\frac{d_x(m)+\bar{d}_y(m')}{m}>{\rm par}\mu(E)$$
holds for some $1\le m<r$. Note $K_{m'_y+1}:={\rm ker}(Q_{m'_y+1}(E)_y\twoheadrightarrow Q_{m'_y}(E)_y)$ has
${\rm dim}(K_{m'_y+1})=r_{m'_y+1}(y)-r_{m'_y}(y)\ge r_{m'_y+1}(y)-m'$ and, for any subspace $W\subset K_{m'_y+1}$ of
dimension $r_{m'_y+1}(y)-m'$, there is a subbundle $F=\sO_{\mathbb{P}^1}^{\oplus m}\subset E$ such that $Q_{m'_y+1}(E)^F_y=W$.
Then $n^F_{m'_y+1}(y)=r_{m'_y+1}(y)-m'$, $n^F_i(y)=n_i(y)$ ($m'_y+2\le i\le l_y+1$), $n^F_i(y)=0$ ($1\le i\le m'_y$), $\bar{d}_y(m')=\sum^{l_y+1}_{i=1}\frac{a_i(y)}{k}n^F_i(y)$ and  $$\aligned&\sum^{l_x+1}_{i=1}\frac{a_i(x)}{k}n^F_i(x)-d_x(m)=\sum^{l_x+1}_{i=m_x+1}\frac{a_i(x)}{k}n^F_i(x)-\sum^{m_x}_{i=1}\frac{a_i(x)}{k}(n_i(x)-n^F_i(x))
\\&-(m-r_{m_x}(x))\frac{a_{m_x+1}(x)}{k}\ge\left(\sum^{l_x+1}_{i=1}n^F_i(x)-m\right)\frac{a_{m_x+1}(x)}{k}=0,\endaligned$$
which imply ${\rm par}\mu(F)\ge \frac{d_x(m)+\bar{d}_y(m')}{m}>{\rm par}\mu(E)$. Thus $E$ is not semistable.

To prove the claim, if both $\frac{d_x(m)+\bar{d}_y(m')}{m}\le {\rm par}\mu(E)$ and $\frac{\bar{d}_x(m)+d_y(m')}{m'}\le{\rm par}\mu(E)$ hold for
all $1\le m<r$, we will show
\ga{4.16}{\frac{a_i(x)}{k}+\frac{a_{l_y-i+2}(y)}{k}={\rm par}\mu(E),\quad n_i(x)=n_{l_y-i+2}(y)}
for $1\le i\le {\rm min}\{l_x+1,\,l_y+1\}$, which imply $\lambda_x\sim\lambda_y^*$. Indeed, if both $\frac{d_x(m)+\bar{d}_y(m')}{m}\le {\rm par}\mu(E)$ and $\frac{\bar{d}_x(m)+d_y(m')}{m'}\le{\rm par}\mu(E)$ hold for
all $1\le m<r$, we must have the equalities (for all $1\le m<r$)
\ga{4.17}{\frac{d_x(m)+\bar{d}_y(m')}{m}={\rm par}\mu(E),\quad \frac{\bar{d}_x(m)+d_y(m')}{m'}={\rm par}\mu(E)}
since $d_x(m)+\bar{d}_y(m')+\bar{d}_x(m)+d_y(m')=(m+m'){\rm par}\mu(E)$.
We will prove \eqref{4.16} by taking different $1\le m<r$ in \eqref{4.17}. To check \eqref{4.16} for $i=1$ firstly. Take $m=1$, we have $d_x(m)=\frac{a_1(x)}{k}$, $\bar{d}_y(m')=\frac{a_{l_y+1}(y)}{k}$
and $\frac{a_1(x)}{k}+\frac{a_{l_y+1}(y)}{k}={\rm par}\mu(E)$. Take $m=r_1(x)$ and
$m'=r-r_1(x)$, then $d_x(m)=\frac{a_1(x)}{k}r_1(x)$ and
$$\aligned&\bar{d}_y(m')=\sum^{l_y+1}_{i=m'_y+2}\frac{a_i(y)}{k}n_i(y)+(r_{m'_y+1}(y)-m')\frac{a_{m'_y+1}(y)}{k}\\&\le
\frac{a_{l_y+1}(y)}{k}n_{l_y+1}(y)+(r_{m'_y+1}(y)-m')\frac{a_{m'_y+1}(y)}{k}\,\,(\text{if $m'_y<l_y$})\\&<
\frac{a_{l_y+1}(y)}{k}n_{l_y+1}(y)+(r_{l_y}(y)-m')\frac{a_{l_y+1}(y)}{k}=\frac{a_{l_y+1}(y)}{k}r_1(x)
\endaligned$$
which implies ${\rm par}\mu(E)=\frac{d_x(m)+\bar{d}_y(m')}{m}<\frac{a_1(x)}{k}+\frac{a_{l_y+1}(y)}{k}={\rm par}\mu(E)$. Thus we must have
$m'_y=l_y$, i.e. $r_{l_y}(y)\le m'=r-r_1(x)$, which means $n_{l_y+1}(y)=r-r_{l_y}(y)\ge r_1(x)=n_1(x)$. In fact, $n_1(x)=n_{l_y+1}(y)$.
Otherwise, take $m=n_1(x)+1\le n_{l_y+1}(y)$ (which implies $m'_y=l_y$), then $d_x(m)=\frac{a_1(x)}{k}n_1(x)+\frac{a_2(x)}{k}$,
$\bar{d}_y(m')=\frac{a_{l_y+1}(x)}{k}m$ and (by $a_1(x)<a_2(x)$) we get contradiction:
${\rm par}\mu(E)=\frac{d_x(m)+\bar{d}_y(m')}{m}<\frac{a_1(x)}{k}+\frac{a_{l_y+1}(y)}{k}={\rm par}\mu(E).$
Assume $1<i_0\le {\rm min}\{l_x+1,\,l_y+1\}$ such that \eqref{4.16}
holds for all $i<i_0$, we show \eqref{4.16}
holds for $i=i_0$. Take $m=r_{i_0-1}(x)+1$, then $m'=r-r_{i_0-1}(x)-1=r_{l_y-i_0+2}(y)-1$, $m'_y=l_y-i_0+1$, $d_x(m)=\sum^{i_0-1}_{i=1}\frac{a_i(x)}{k}n_i(x)+\frac{a_{i_0}(x)}{k}$ and $\bar{d}_y(m')=\sum^{l_y+1}_{i=l_y-i_0+3}\frac{a_i(y)}{k}n_i(y)+\frac{a_{l_y-i_0+2}(y)}{k}$.
By \eqref{4.17} and $\sum^{i_0-1}_{i=1}\frac{a_i(x)}{k}n_i(x)+\sum^{l_y+1}_{i=l_y-i_0+3}\frac{a_i(y)}{k}n_i(y)=r_{i_0-1}(x){\rm par}\mu(E)$,
we have $\frac{a_{i_0}(x)}{k}+\frac{a_{l_y-i_0+2}(y)}{k}={\rm par}\mu(E)$. If $n_{i_0}(x)>n_{l_y-i_0+2}(y)$, take $m=r_{i_0}(x)$, then $m'=r_{l_y-i_0+2}(y)-n_{i_0}(x)$ and $m'_y\le l_y-i_0$ (otherwise $n_{i_0}(x)\le n_{l_y-i_0+2}(y)$), which imply a contradiction:
$$\aligned &d_x(m)+\bar{d}_y(m')=\sum^{i_0-1}_{i=1}\frac{a_i(x)+a_{l_y-i+2}(y)}{k}n_i(x)+\frac{a_{i_0}(x)}{k}n_{i_0}(x)
+\\&\sum^{l_y-i_0+2}_{i=m'_y+2}\frac{a_i(y)}{k}n_i(y)+(r_{m'_y+1}(y)-m')\frac{a_{m'_y+1}(y)}{k}< r_{i_0-1}(x){\rm par}\mu(E)
+\\&\frac{a_{i_0}(x)}{k}n_{i_0}(x)+\frac{a_{i_0}(x)+a_{l_y-i_0+2}(y)}{k}n_{i_0}(x)=r_{i_0}(x){\rm par}\mu(E).
\endaligned$$ If $n_{i_0}(x)<n_{l_y-i_0+2}(y)$, take $m=r_{i_0}(x)+1$, then
$m'=r-r_{i_0}(x)-1=r_{l_y-i_0+2}(y)-(n_{i_0}(x)+1)\ge r_{l_y-i_0+1}(y)$. Thus $m'_y=l_y-i_0+1$ and
$d_x(m)+\bar{d}_y(m')=\sum^{i_0}_{i=1}\frac{a_i(x)+a_{l_y-i+2}(y)}{k}n_i(x)+\frac{a_{i_0+1}(x)}{k}+\frac{a_{l_y-i_0+2}(y)}{k}=
(r_{i_0}(x)+1){\rm par}\mu(E)+\frac{a_{i_0+1}(x)}{k}-\frac{a_{i_0}(x)}{k}$, which is a contradiction since $a_{i_0+1}(x)>a_{i_0}(x)$.
We must have $n_{i_0}(x)=n_{l_y-i_0+2}(y)$.

To prove (4), let $\mu=(\,\,\overbrace{\mu_1,\ldots,\mu_1}^{n_1},\cdots,\overbrace{\mu_{l+1},\ldots,\mu_{l+1}}^{n_{l+1}}\,\,)\in Y(\lambda_y,\omega_s)$ and recall
that parabolic structure at $x\in \mathbb{P}^1$ determined by $\omega_s$ is
$$E_x=Q_2(E)_x\twoheadrightarrow Q_1(E)_x\twoheadrightarrow Q_0(E)_x=0$$
with $n_1(x)=s$, $n_2(x)=r-s$, $a_1(x)=k-1$ and $a_2(x)=k$. For any $E\in \sR^0$, it is easy to compute that
${\rm par}\mu(E)=2-\frac{1}{k\cdot r}(s+|\lambda_y|-|\lambda_z^*|)$.

\textbf{When $\lambda_z^*\sim \mu$}, we have $l=l_z$, $n_i=n_{l_z-i+2}(z)$ and there is a constant $a\in \mathbb{Z}$ such that
$\mu_i-a_{l_z-i+2}(z)=a$ ($1\le i\le l_z+1$), which implies ${\rm par}\mu(E)=2-\frac{a}{k}$. We are going to prove that $\sR^0\cap \sR^{ss}_{{\{\omega_s,\lambda_y,\lambda_z\}}}$
is nonempty and any $E\in \sR^0\cap \sR^{ss}_{{\{\omega_s,\lambda_y,\lambda_z\}}}$ is $s$-equivalent to a direct sum of parabolic line bundles.
If $\mu_1=k-a_1(y)+1$, let $\lambda_i(y):=k-a_i(y)$,
$$\aligned&\lambda_y'=(\,\,\overbrace{\lambda_1(y),\ldots,\lambda_1(y)}^{n_1(y)-1},\,\,\overbrace{\lambda_1(y),\ldots,\lambda_1(y)}^{n_2(y)},\,\,\cdots,\,\,
\overbrace{\lambda_{l_y+1}(y),\ldots,\lambda_{l_y+1}(y)}^{n_{l_y+1}(y)}\,\,),\\
&\lambda'^{*}_z=(\,\,\overbrace{a_{l_z+1}(z),\ldots,a_{l_z+1}(z)}^{n_{l_z+1}(z)-1},\,\,\overbrace{a_{l_z}(z),\ldots,a_{l_z}(z)}^{n_{l_z}(z)},\,\,\cdots,\, \,\overbrace{a_1(z),\ldots,a_1(z)}^{n_1(z)}\,\,)
\endaligned$$
and $\sL=(\sO_{\mathbb{P}^1},\{k-1,\,a_1(y),\,a_{l_z+1}(z)\})$. Note that
$$\lambda'^{*}_z\sim \mu'=(\,\,\overbrace{\mu_1,\ldots,\mu_1}^{n_1-1},\cdots,\overbrace{\mu_{l+1},\ldots,\mu_{l+1}}^{n_{l+1}}\,\,)\in Y(\lambda'_y,\omega_{s-1}),$$
we have $\sR^0\cap \sR^{ss}_{{\{\omega_{s-1},\lambda'_y,\lambda'_z\}}}\neq\emptyset$ by induction. Let
$$E'=(\,\sO_{\mathbb{P}^1}^{\oplus(r-1)},\,\{\omega_{s-1},\lambda'_y,\lambda'_z\})\in \sR^0\cap \sR^{ss}_{{\{\omega_{s-1},\lambda'_y,\lambda'_z\}}}$$
and $E=\sL\oplus E'$, then $E\in \sR^0\cap \sR^{ss}_{{\{\omega_s,\lambda_y,\lambda_z\}}}$ since
$${\rm par}\mu(\sL)=\frac{k-1+a_1(y)+a_{l_z+1}(z)}{k}={\rm par}\mu(E')=2-\frac{a}{k}.$$
Conversely, for any $E\in \sR^0\cap \sR^{ss}_{{\{\omega_s,\lambda_y,\lambda_z\}}}$,
there is a $\sL=\sO_{\mathbb{P}^1}\subset E$ such that $n^{\sL}_{l_z+1}(z)=1$. The semi-stability of $E$ implies $n^{\sL}_1(x)=1$
and $n_1^{\sL}(y)=1$. Then $\sL=(\sO_{\mathbb{P}^1}, \{k-1, a_1(y), a_{l_z+1}(z)\})$ is a parabolic sub-bundle of $E=(\sO_{\mathbb{P}^1}^{\oplus r}, \{\omega_s,\lambda_y,\lambda_z\})$ with ${\rm par}\mu(\sL)={\rm par}\mu(E)$ and its quotient parabolic bundle $E'=(\sO_{\mathbb{P}^1}^{\oplus (r-1)},\{\omega_{s-1},\lambda'_y,\lambda'_z\})$ is semi-stable. By induction, $E$ is $s$-equivalent to direct sum of parabolic line bundles.

If $\mu_1=k-a_1(y)$, we have $a_1(y)+a_{l_z+1}(z)=k-a$,
$$\lambda'^{*}_z\sim \mu'=(\,\,\overbrace{\mu_1,\ldots,\mu_1}^{n_1-1},\cdots,\overbrace{\mu_{l+1},\ldots,\mu_{l+1}}^{n_{l+1}}\,\,)\in Y(\lambda'_y,\omega_s)$$
and there exists $E'=(\,\sO_{\mathbb{P}^1}^{\oplus(r-1)},\,\{\omega_s,\lambda'_y,\lambda'_z\})\in \sR^0\cap \sR^{ss}_{{\{\omega_s,\lambda'_y,\lambda'_z\}}}$ by
induction of the rank. Then $E'\oplus \sL\in \sR^0\cap \sR^{ss}_{{\{\omega_s,\lambda_y,\lambda_z\}}}$ where
$$\sL=(\sO_{\mathbb{P}^1}, \{k, a_1(y), a_{l_z+1}(z)\})$$
with ${\rm par}\mu(\sL)=\frac{k+a_1(y)+a_{l_z+1}(z)}{k}=2-\frac{a}{k}={\rm par}\mu(E)={\rm par}\mu(E')$. On the other hand, $\forall\,\,E\in \sR^0\cap \sR^{ss}_{{\{\omega_s,\lambda_y,\lambda_z\}}}$, let $E'=\sO_{\mathbb{P}^1}^{\oplus(r-1)}\subset E$ such that
$$E'_y\supset K_y={\rm ker}\{E_y=Q_{l_y+1}(E)_y\to Q_1(E)_y\}.$$ Then the induced flag
$E'_y\twoheadrightarrow Q_{l_y}(E)_y^{E'}\twoheadrightarrow\cdots\twoheadrightarrow Q_1(E)_y^{E'}\to 0$ must have
${\rm dim}(Q_i(E)_y^{E'})={\rm dim}(Q_i(E)_y)-1$ ($1\le i\le l_y+1$). Let
$$0\to E'\to E\to\sL\to 0$$ and $\sL=(\sO_{\mathbb{P}^1}, \{a_i(x), a_1(y), a_j(z)\})$ be the induced parabolic quotient line bundle.
Then ${\rm par}\mu(\sL)\ge {\rm par}\mu(E)=2-\frac{a}{k}$ by semi-stability of $E$. But
${\rm par}\mu(\sL)=\frac{a_i(x)+a_1(y)+a_j(z)}{k}=2-\frac{a}{k}+\frac{a_i(x)-k+a_j(z)-a_{l_z+1}(z)}{k}\le 2-\frac{a}{k}$, the equality
holds if and only if $a_i(x)=k$ and $a_j(z)=a_{l_z+1}(z)$. Thus $E$ is $s$-equivalent to $E'\oplus (\sO_{\mathbb{P}^1}, \{k, a_1(y), a_{l_z+1}(z)\})$ where
$$E'=(\,\sO_{\mathbb{P}^1}^{\oplus(r-1)},\,\{\omega_s,\lambda'_y,\lambda'_z\})\in \sR^0\cap \sR^{ss}_{{\{\omega_s,\lambda'_y,\lambda'_z\}}}.$$

\textbf{When $\lambda_z^*\nsim \mu$ for any $\mu\in Y(\lambda_y,\omega_s)$}, we prove $\sR^0\cap \sR^{ss}_{{\{\omega_s,\lambda_y,\lambda_z\}}}=\emptyset.$
In fact, if there is a $E\in \sR^0\cap \sR^{ss}_{{\{\omega_s,\lambda_y,\lambda_z\}}}$, we will prove that there exists $\mu\in Y(\lambda_y,\omega_s)$ such that
$\lambda_z^*\sim \mu$. Let $a=\frac{s+|\lambda_y|-|\lambda_z^*|}{r}\in \mathbb{Z}$, then ${\rm par}\mu(E)=2-\frac{a}{k}$ and $k-1+a_1(y)+a_{l_z+1}(z)\le 2k-a$. Otherwise, let $\sL=\sO_{\mathbb{P}^1}\subset E$ be a sub-bundle such that $n^{\sL}_{l_z+1}(z)=1$, which implies
$${\rm par}\mu(\sL)=\frac{a_i(x)+a_j(y)+a_{l_z+1}(z)}{k}\ge\frac{a_1(x)+a_1(y)+a_{l_z+1}(z)}{k}>2-\frac{a}{k}.$$
Similarly, if $k+a_1(y)+a_{l_z+1}(z)<2k-a$, we find a parabolic quotient line bundle $\sL$ with ${\rm par}\mu(\sL)=\frac{a_i(x)+a_1(y)+a_j(z)}{k}\le\frac{k+a_1(y)+a_{l_z+1}(z)}{k}<2-\frac{a}{k}.$ Thus $k-1+a_1(y)+a_{l_z+1}(z)\le 2k-a\le k+a_1(y)+a_{l_z+1}(z)$, which implies either $\frac{k-1+a_1(y)+a_{l_z+1}(z)}{k}=2-\frac{a}{k}$ or $\frac{k+a_1(y)+a_{l_z+1}(z)}{k}=2-\frac{a}{k}$. Then we have
either $\sR^0\cap \sR^{ss}_{{\{\omega_{s-1},\lambda'_y,\lambda'_z\}}}\neq\emptyset$ or $\sR^0\cap \sR^{ss}_{{\{\omega_s,\lambda'_y,\lambda'_z\}}}\neq\emptyset$ (see proof of the case when $\lambda_z^*\sim\mu$). By induction, we have either $\lambda^{'*}_z\sim\mu'\in Y(\lambda_y',\omega_{s-1})$ or $\lambda^{'*}_z\sim\mu'\in Y(\lambda_y',\omega_{s})$, which imply $\lambda^*_z\sim\mu\in Y(\lambda_y,\omega_{s})$.

\end{proof}

\begin{prop}\label{prop4.8} $D_0(r,d,\{\lambda_x\}_{x\in I})=V_0(r,d,\{\lambda_x\}_{x\in I})$ if $|I|\le 3$.
\end{prop}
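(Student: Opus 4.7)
The plan is to proceed by case analysis on $|I|$, invoking Lemmas~\ref{lem4.6} and~\ref{lem4.7} after first reducing to $d=0$. My first step is to use Lemma~\ref{lem3.9} and Lemma~\ref{lem3.10} to Hecke-transform $(d,\omega)$ into $(0,\tilde\omega)$ for a modified datum $\tilde\omega$, and to verify that the Verlinde number $V_0$ transforms identically. The key identity is $S_{\lambda+(c,\ldots,c)}(\vec z)=(z_1\cdots z_r)^c\,S_\lambda(\vec z)$; combined with the way $|\omega|$ and $d$ shift under Hecke, the exponential factor and the Schur-polynomial factor in \eqref{4.11} compensate each other, giving $V_0(r,d,\omega)=V_0(r,0,\tilde\omega)$, so it suffices to prove the identity at $d=0$.

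For $|I|\le 2$, Lemma~\ref{lem4.7}(1)--(3) identifies $\mathcal{U}_{\mathbb{P}^1}(r,0,\{\lambda_x\}_{x\in I})$ as either a single reduced point or empty, yielding $D_0\in\{0,1\}$ with an explicit criterion; the companion Lemma~\ref{lem4.6}(1)--(3) computes $V_0$ with the same criterion, so the two match case by case. For $|I|=3$ with one partition equal to $\omega_s$, Lemma~\ref{lem4.7}(4) and Lemma~\ref{lem4.6}(4) agree: $D_0=V_0=1$ exactly when $\lambda_z^*\sim\mu\in Y(\lambda_y,\omega_s)$, and $0$ otherwise. These four cases are immediate.

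The main obstacle is the remaining $|I|=3$ case with three general partitions $\{\lambda_x,\lambda_y,\lambda_z\}$, to which Lemma~\ref{lem4.7} does not directly apply. My plan is to reduce it to the $\omega_s$-case by iterated use of the factorization formula~\eqref{3.9} combined with the Pieri rule $S_{\mu}\cdot S_{\omega_s}=\sum_{\nu\in Y(\mu,\omega_s)}S_\nu$. Concretely, I would peel off the columns of one of the partitions (say $\lambda_x$) one at a time using Jacobi--Trudi/Pieri, and at each step apply \eqref{3.9} to degenerate $\mathbb{P}^1$ into a two-component nodal curve carrying one $\omega_s$-type weight at the new node; this expresses $D_0(r,0,\{\lambda_x,\lambda_y,\lambda_z\})$ as an iterated sum over intermediate weights $\mu\in W'_k$ whose inner factors are the $\omega_s$-case already handled. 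On the Verlinde side the same Pieri identity yields the analogous decomposition of $V_0$, and induction on the number of columns allows us to match the two decompositions term-by-term. The hardest part of the argument will be verifying that the geometric (factorization-based) and combinatorial (Pieri-based) decompositions align, and that the required degree and integrality conditions in \eqref{3.9} and \eqref{3.14} are preserved throughout the iteration.
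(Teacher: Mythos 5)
Your reduction to $d=0$ and your handling of $|I|\le 2$ and of the $\{\omega_s,\lambda_y,\lambda_z\}$ case track the paper exactly (one caveat: the identity $S_{\lambda+(c,\dots,c)}=(z_1\cdots z_r)^c S_\lambda$ only accounts for invariance under the equivalence $\sim$; the degree-shifting step \eqref{4.20} instead needs the transformation law of $S_{H^1(\mu)}$ at the points ${\rm exp}\,2\pi i\vec v/(r+k)$, where $H^1$ is the rotation $(k-\mu_{r-1}+\mu_r,\mu_1-\mu_{r-1},\dots,0)$, producing the sign $(-1)^{r-1}$ that matches the prefactor $(-1)^{d(r-1)}$ in \eqref{4.11} --- a different, though routine, computation). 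The genuine gap is in your treatment of the general three-point case, in two places.

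First, the mechanism you describe --- degenerate $\mathbb{P}^1$ so that the new node ``carries one $\omega_s$-type weight'' --- is not available: in \eqref{3.9} the weight $\mu$ at the new node is \emph{summed over all of} $W'_k$ and cannot be constrained to be of type $\omega_s$; splitting $\{\lambda_x\}\cup\{\lambda_y,\lambda_z\}$ just returns the tautology $D_0=D_0$ via Lemma \ref{lem4.7}(3). The move that actually works (the paper's) is to \emph{add an auxiliary fourth marked point}: write $\lambda'_x=\lambda_x-\omega_{s(\lambda_x)}$ and factorize the four-point datum $\{\omega_{s(\lambda_x)},\lambda'_x,\lambda_y,\lambda_z\}$ in two different ways. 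Grouping $\{\omega_{s(\lambda_x)},\lambda'_x\}$ together, Lemma \ref{lem4.7}(4) collapses the sum over $W'_k$ to $\sum_{\mu\in Y(\lambda'_x,\omega_{s(\lambda_x)})}D_0(r,0,\{\mu,\lambda_y,\lambda_z\})$, in which $D_0(r,0,\{\lambda_x,\lambda_y,\lambda_z\})$ is the single term not covered by induction; grouping $\{\omega_{s(\lambda_x)},\lambda_y\}$ and $\{\lambda'_x,\lambda_z\}$ instead makes every factor known and evaluates the same quantity to $V_0(r,0,\{\omega_{s(\lambda_x)},\lambda'_x,\lambda_y,\lambda_z\})$, which Pieri re-expands as $\sum_{\mu\in Y(\lambda'_x,\omega_{s(\lambda_x)})}V_0(r,0,\{\mu,\lambda_y,\lambda_z\})$. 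One then \emph{solves for the single unknown term}; this is not a term-by-term matching. Second, your induction variable fails: other $\mu\in Y(\lambda'_x,\omega_{s(\lambda_x)})\setminus\{\lambda_x\}$ can have first part equal to that of $\lambda_x$, i.e. the same number of columns, so ``induction on the number of columns'' does not strictly decrease on the remaining terms. The paper inducts on $m(\lambda_x)-s(\lambda_x)$ (number of boxes minus number of nonzero rows), which does strictly drop for every $\mu\neq\lambda_x$ in the expansion, because any added box not rebuilding the first column must open a new row and hence increases $s(\mu)$ while $|\mu|=|\lambda_x|$ is fixed.
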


\begin{proof} We treat firstly the case $r|d$. By Lemma \ref{4.6} and Lemma \ref{4.7}, we can assume $d=0$ and $|I|=3$. For any partition $\lambda=(\lambda_1,...,\lambda_r)$, let $$s(\lambda)={\rm max}\{\,i\,|\, \lambda_i-\lambda_{i+1}>0\,\},\quad m(\lambda)=\sum^{s(\lambda)}_{i=1}\lambda_i$$
and $s(\lambda)=0$ if $\lambda_1=\lambda_2=\cdots=\lambda_r$. When $s(\lambda_x)=0$, $\lambda_x$ defines trivial parabolic structure at $x\in I$ and the proof reduces to the
case of $|I|=2$. Thus, to prove $D_0(r, 0,\{\lambda_x,\lambda_y,\lambda_z\})=V_0(r, 0,\{\lambda_x,\lambda_y,\lambda_z\})$, we can assume $s(\lambda_x)>0$. We will prove it by
induction of $m(\lambda_x)-s(\lambda_x)$.

When $m(\lambda_x)-s(\lambda_x)=0$, $\lambda_x$ must be $\omega_{s(\lambda_x)}$ and the equality
\ga{4.18}{D_0(r, 0,\{\lambda_x,\lambda_y,\lambda_z\})=V_0(r, 0,\{\lambda_x,\lambda_y,\lambda_z\})}
holds by (4) of Lemma \ref{4.6} and Lemma \ref{4.7}. Assume the equality \eqref{4.18} holds for any $\lambda_y$ and $\lambda_z$ when $m(\lambda_x)-s(\lambda_x)<N$.
For any $\lambda_x$ with $m(\lambda_x)-s(\lambda_x)=N$, let $\lambda'_x=\lambda_x-\omega_{s(\lambda_x)}$, then
${\mu\in Y(\lambda'_x,\omega_{s(\lambda_x)})-\{\lambda_x\}}$ is obtained by adding $t<s(\lambda_x)$ boxes to the first $s(\lambda_x)$ rows and other $s(\lambda_x)-t$ boxes respectively to $(s(\lambda_x)+1)$-th row, ..., $(2s(\lambda_x)-t)$-th row. Thus $m(\mu)-s(\mu)=m(\lambda_x)-s(\lambda_x)-(s(\lambda_x)-t)<N$. Then, by
the recurrence relation \eqref{3.9} and (4) of Lemma \ref{lem4.7}, we have
$$\aligned &D_0(r,0,\{\omega_{s(\lambda_x)},\lambda'_x,\lambda_y,\lambda_z\})\\&=\sum_{\mu^{\ast}\in W_k'}D_0(r,0,\{\omega_{s(\lambda_x)},\lambda'_x,\mu^{\ast}\})\cdot D_0(r,0,\{\mu,\lambda_y,\lambda_z\})\\&=\sum_{\mu\in Y(\lambda'_x,\omega_{s(\lambda_x)})}D_0(r,0,\{\mu,\lambda_y,\lambda_z\})\\&=D_0(r,0,\{\lambda_x,\lambda_y,\lambda_z\})+
\sum_{\mu \in Y(\lambda'_x,\omega_{s(\lambda_x)})\setminus\{\lambda_x\}} V_0(r,0,\{\mu,\lambda_y,\lambda_z\}).\endaligned$$
By Lemma \ref{lem4.6}, Lemma \ref{4.7} and recurrence relation \eqref{3.9} again, we have
$$\aligned&D_0(r,0,\{\omega_{s(\lambda_x)},\lambda_y,\lambda'_x,\lambda_z\})\\&=\sum_{\mu\in W'_k}D_0(r,0,\{\omega_{s(\lambda_x)},\lambda_y,\mu\})\cdot D_0(r,0,\{\lambda'_x,\lambda_z,\mu^{\ast}\})
\\&=\sum_{\mu\in W'_k}V_0(r,0,\{\omega_{s(\lambda_x)},\lambda_y,\mu\})\cdot V_0(r,0,\{\lambda'_x,\lambda_z,\mu^{\ast}\})\\&=V_0(r,0,\{\omega_{s(\lambda_x)},\lambda'_x,\lambda_y,\lambda_z\})\endaligned$$
where $D_0(r,0,\{\lambda'_x,\lambda_z,\mu^{\ast}\})
=V_0(r,0,\{\lambda'_x,\lambda_z,\mu^{\ast}\})$ by induction assumption (since either $m(\lambda'_x)-s(\lambda'_x)<N$ or $s(\lambda'_x)=0$). Thus
$$\aligned &D_0(r,0,\{\lambda_x,\lambda_y,\lambda_z\})+
\sum_{\mu \in Y(\lambda'_x,\omega_{s(\lambda_x)})\setminus\{\lambda_x\}} V_0(r,0,\{\mu,\lambda_y,\lambda_z\})\\&=V_0(r,0,\{\omega_{s(\lambda_x)},\lambda'_x,\lambda_y,\lambda_z\})\\&=\frac{1}{r(r+k)}
\sum_{\vec v}\frac{\mathrm{exp}\left(-2\pi i\frac{|\omega_{s(\lambda_x)}|+|\lambda'_x|+|\lambda_y|+|\lambda_z|}{r(r+k)}\right)S_{\{\omega_{s(\lambda_x)},\lambda'_x,\lambda_y,\lambda_z\}}\left(\mathrm{exp}2\pi i\frac{\vec v}{r+k}\right)}{\prod_{i<j}(2sin \pi\frac{v_i-v_j}{r+k})^{-2}}\\&
=\frac{1}{r(r+k)}\sum_{\vec v}\sum_{\mu\in Y(\lambda'_x,\omega_{s(\lambda_x)})}\frac{\mathrm{exp}\left(-2\pi i\frac{|\mu|+|\lambda_y|+|\lambda_z|}{r(r+k)} \right)S_{\{\mu,\lambda_y,\lambda_z\}}\left( \mathrm{exp}2\pi i\frac{\vec v}{r+k}\right)}{\prod_{i<j}(2sin \pi\frac{v_i-v_j}{r+k})^{-2}}
\\&
=V_0(r,0,\{\lambda_x,\lambda_y,\lambda_z\})+\sum_{\mu\in Y(\lambda'_x,\omega_{s(\lambda_x)})\setminus\{\lambda_x\}}V_0(r,0,\{\mu,\lambda_y,\lambda_z\}),\endaligned$$
which implies equality \eqref{4.18} and finishes the proof of case $r|d$.

If $r\nmid d$, we can assume $0<d<r$. Let $\{\lambda_x\}_{x\in I}=\{\lambda_x\}_{x\in I'}\cup \{\lambda_z\}$. Then, by Lemma \ref{lem3.9} and Lemma \ref{lem3.10}
(or see proof of Theorem \ref{thm3.12} in terms of partitions), we have
$$D_g(r,d,\{\lambda_x\}_{x\in I})=D_g(r, 0, \{\lambda_x\}_{x\in I'}\cup \{H^{r-d}(\lambda_z)\})$$
where $H^{r-d}(\lambda_z)$ is the Hecke transformation of $\lambda_z$ (which is the inverse of $H_z$ considered in Remark \ref{rmks3.11}). Note that, even if $|I|=0$, we can add a trivial parabolic structure $\lambda_z=(k, k, ...,k)$ which
does not change the numbers $D_g(r, d, \{\lambda_x\}_{x\in I})$ and $V_g(r, d, \{\lambda_x\}_{x\in I})$. Thus, to finish our proof, we only need to show
\ga{4.19}{V_g(r,d,\{\lambda_x\}_{x\in I})=V_g(r, 0, \{\lambda_x\}_{x\in I'}\cup \{H^{r-d}(\lambda_z)\}).}
Recall that for any $\mu=(\mu_1,\cdots, \mu_r),$ $0\leq \mu_r\leq \cdots \leq \mu_1\leq k$, we define
$$H^1(\mu)=(k-\mu_{r-1}+\mu_r,\mu_1-\mu_{r-1},\mu_2-\mu_{r-1},\cdots, \mu_{r-2}-\mu_{r-1},0)$$
and $H^{m}(\mu):=H^1(H^{m-1}(\mu))$ for $2\leq m\leq r$. It is enough to show
\ga{4.20}{V_g(r,d,\{\lambda_x\}_{x\in I})=V_g(r, d+1, \{\lambda_x\}_{x\in I'}\cup \{H^1(\lambda_z)\}).}
Note that $|H^1(\mu)|=k-r\mu_{r-1}+|\mu|$ and $S_{H^1(\mu)}\left( \mathrm{exp}2\pi i\frac{\vec v}{r+k}\right)=$
$$(-1)^{r-1}\mathrm{exp}2\pi i\left(-\frac{\mu_{r-1}+1}{r+k}\sum_{i=1}^r v_i\right)
S_{\mu}\left(\mathrm{exp}2\pi i\frac{\vec v}{r+k}\right).$$
It is easy to check \eqref{4.20} and we are done.
\end{proof}

\bibliographystyle{plain}

\renewcommand\refname{References}

\end{document}